\newcommand{\Norm}[1]{\left\|#1\right\|}
\newcommand{\NormInfinity}[1]{\left\|#1\right\|_{\infty}}
\newcommand{\BlackBox}{\rule{1.5ex}{1.5ex}}  % end of proof
\newcommand{\qed}{\hfill\BlackBox\\[2mm]}
\newcommand{\risk}{\mathcal{L}}
\newcommand{\cT}{\mathcal{T}}
\newcommand{\cX}{\mathcal{X}}
\newcommand{\bayes}{s}
\newcommand{\loss}[1]{\ell(s,#1)}
\newcommand{\xloss}[1]{\ell_X(#1)}
\newcommand{\learnrule}{\mathcal{A}}
\newcommand{\ERMho}[1]{\widehat{f}^{\,\mathrm{ho}}_{#1}}
\newcommand{\ERMag}[1]{\widehat{f}^{\,\mathrm{ag}}_{#1}}
\newcommand{\ERMcv}[1]{\widehat{f}^{\,\mathrm{cv}}_{#1}}
\newcommand{\ERMacv}[1]{\widehat{f}^{\,\mathrm{acv}}_{#1}}
\newcommand{\numberthis}{\addtocounter{equation}{1}\tag{\theequation}}
\newcommand\independent{\protect\mathpalette{\protect\independenT}{\perp}}
    \def\independenT#1#2{\mathrel{\rlap{$#1#2$}\mkern2mu{#1#2}}}
\newenvironment{proof}{\par\noindent{\bf Proof\ }}{\qed}
\DeclareMathOperator*{\argmin}{argmin}
\newtheorem{lemma}{Lemma}[section]
\newtheorem{Theorem}[lemma]{Theorem}
\newtheorem{corollary}[lemma]{Corollary}
\newtheorem{proposition}[lemma]{Proposition}
\newtheorem{Definition}[lemma]{Definition}
\newtheorem{claim}{Claim}[lemma]
\newtheorem{hyp}{Hypothesis}[section]
\title{Aggregated hold out for sparse linear regression with a robust loss function}
\author{Guillaume Maillard}
\begin{document}

\maketitle
%\begin{frontmatter}

% \title{Aggregated hold out for sparse linear regression with a robust loss function}
%\runtitle{Agghoo in sparse, robust regression}
% \thankstext{T1}{Footnote to the title with the `thankstext' command.}

%\begin{aug}
%  \author{Guillaume Maillard}
 %\ead[label=e1]{guillaume.maillard@uni.lu}},
%   \and
%   \author{Second Author\ead[label=e2]{second@somewhere.com}}

% \address{Maison du Nombre, 6 Avenue de la Fonte, Esch-sur-Alzette, Luxembourg\\ 
%          \printead{e1}}

%   \author{Third Author
%   \ead[label=e3]{third@somewhere.com}%
%   \ead[label=u1,url]{http://www.foo.com}}
% 
%   \address{Address of the Third author,\\
%           \printead{e3,u1}}

%   \thankstext{t2}{Footnote to the first author with the `thankstext' command.}
% \runauthor{G. Maillard}
%\end{aug}

\begin{abstract}
 Sparse linear regression methods generally have a free hyperparameter which controls the amount of sparsity, and is subject to a bias-variance tradeoff. This article considers the use of Aggregated hold-out to aggregate over values of this hyperparameter, in the context of linear regression with the Huber loss function. Aggregated hold-out (Agghoo) is a procedure which averages estimators selected by hold-out (cross-validation with a single split). In the theoretical part of the article, it is proved that Agghoo satisfies a non-asymptotic oracle inequality when it is applied to sparse estimators which are parametrized by their zero-norm. In particular, this includes a variant of the Lasso introduced by Zou, Hastié and Tibshirani \cite{Zou_Has_Tib:2007}. Simulations are used to compare Agghoo with cross-validation. They show that Agghoo performs better than CV when the intrinsic dimension is high and when there are confounders correlated with the predictive covariates.    
\end{abstract}

% \begin{keyword}[class=MSC]
% \kwd[Primary ]{62J07}
% \kwd{62J99}
% \kwd[; secondary ]{62G08}
% \end{keyword}

% \begin{keyword}
% \kwd{Hyperparameter selection}
% \kwd{sparse regression}
% \kwd{cross-validation}
% \kwd{robust regression}
% \kwd{Lasso}
% \kwd{aggregation}
% \kwd{model selection}
% \end{keyword}
% \tableofcontents
% \end{frontmatter}

\section{Introduction}

From the statistical learning point of view, linear regression is a risk-minimization problem wherein the aim is to minimize the average \emph{prediction error} $\phi(Y - \theta^T X)$ on a new, independent data-point $(X,Y)$, as measured by a \emph{loss function} $\phi$. When $\phi(x) = x^2$, this yields classical least-squares regression; however, Lipschitz-continuous loss functions have better robustness properties and are therefore preferred in the presence of heavy-tailed noise, since they require fewer moment assumptions on $Y$ \cite{Chinot2020,Huber2009}. Similarly to the $L^2$ norm in the least-squares case, measures of performance for estimators can be derived from robust loss functions  by substracting the risk of the (distribution-dependent) optimal predictor, yielding the so-called \emph{excess risk}. 

In the high-dimensional setting, where $X \in \mathbb{R}^d$ with potentially $d > n$, full linear regression cannot be achieved in general: the minimax excess risk is bounded below by a positive function of $\frac{d}{n}$ (proposition \ref{prop_minimax_huber_reg}). 
Stronger assumptions on the regression coefficient $\theta$ are needed in order to estimate it consistently. 

A popular approach is to suppose that only a small number $k_*$ of covariates are relevant to the prediction of $Y$, so that $\theta$ may be sought among the \emph{sparse} vectors with less than $k_*$ non-zero components. Estimators which target such problems include the Lasso \cite{Tib:1996}, least-angle regression \cite{efron2004} (a similar, but not identical method \cite[Section 3.4.4]{Hastie2009}), and stepwise regression \cite[Section 3.3.2]{Hastie2009}. In the robust setting, variants of the Lasso with robust loss functions have been investigated by a number of authors \cite{lambert-lacroix2011, Rosset-Zhu2007,Chen-Wang-McKeown2010, Wang-Li-Jiang2007}. 

Such methods generally introduce a free hyperparameter which regulates the "sparsity" of the estimator; sometimes this is directly the number of non-zero components, as in stepwise procedures, sometimes not, as in the case of the Lasso, which uses a regularization parameter $\lambda$. 
In any case, the user is left with the problem of calibrating this hyperparameter.
% This hyperparameter gives rise to a bias variance-tradeoff between the bias due to the a priori sparsity constraint and the variance which increases with the dimension, and so calibrating it well is crucial to the performance of the method.

Several goals are conceivable for a hyperparameter selection method, such as support recovery - finding the "predictive" covariates - or estimation of a "true" underlying regression coefficient with respect to some norm on $\mathbb{R}^{d}$. From a prediction perspective, hyperparameters should be chosen so as to minimize the risk, and a good method should approach this minimum. As a consequence, the proposed data-driven choice of hyperparameter should allow the estimator to attain all known convergence rates without any a priori knowledge, effectively adapting to the difficulty of the problem.  

For the Lasso and some variants, such as the fused Lasso, Zou, Wang, Tibshirani and coauthors have proposed \cite{Zou_Has_Tib:2007} and investigated \cite{Wang:2007, Tib:2012} a method based on Mallow's $C_p$ and estimation of the "degrees of freedom of the Lasso". However, consistency of this method has only been proven \cite{Wang:2007}  in an asymptotic setting where the dimension is fixed while $n$ grows, hence not the setting considered here. Moreover, the method depends on specific properties of the Lasso, and may not be readily applicable to other sparse regression procedures.

A much more widely applicable procedure is to choose the hyperparameter by cross-validation. For the Lasso, this approach has been recommended by Tibshirani \cite{Tibshirani1996}, van de Geer and Lederer \cite{vdG2011} and Greenshtein \cite{greenshtein2004}, among many others. More generally, cross-validation is the default method for calibrating hyperparameters in practice. For exemple, R implementations of the elastic net \cite{glmnet:2010} (package \href{https://cran.r-project.org/web/packages/glmnet/index.html}{glmnet}), LARS \cite{efron2004} (package \href{https://cran.r-project.org/web/packages/lars/index.html}{lars}) and the huberized lasso \cite{hqreg:2017} (package \href{https://cran.r-project.org/web/packages/hqreg/index.html}{hqreg}) all incorporate a cross-validation subroutine to automatically choose the hyperparameter.

Theoretically, cross-validation has been shown to perform well in a variety of settings \cite{Arl_Cel:2010:surveyCV}. For cross-validation with one split, also known as the hold-out, and for a bagged variant of v-fold cross-validation \cite{lecue2012}, some general oracle inequalities  are available in least squares regression \cite[Corollary 8.8]{Mas:2003:St-Flour} \cite{wegkamp2003} \cite{lecue2012}. However, they rely on uniform boundedness assumptions on the estimators which may not hold in high-dimensional linear regression. For the more popular V-fold procedure, results are only available in specific settings. Of particular interest here is the article \cite{Nav_Saum:2017} which proves oracle inequalities for linear model selection in least squares regression, since linear model selection is very similar to sparse regression (the main difference being that in sparse regression, the "models" are not fixed a priori but depend on the data). This suggests that similar results could hold for sparse regression.

However, in the case of the Lasso at least, no such general theoretical guarantees exist, to the best of our knowledge. Some oracle inequalities \cite{lecue2012, Mio_Mont:2018} and also fast rates \cite[Theorem 1]{lasso_cv} have been obtained, but only under strong distributional assumptions: \cite{lecue2012} assumes that $X$ has a log-concave distribution, \cite{Mio_Mont:2018} that $X$ is a gaussian vector, and \cite[Theorem 1]{lasso_cv} assumes that there is a true model and that the variance-covariance matrix is diagonal dominant. 
Recently, Chetverikov et al. \cite{Chet_Cher:2021} have obtained fast rates (up to log-terms) for a certain class of conditional distributions (of $Y$ given $X$) which are smooth transformations of Gaussian distributions.
In contrast, there are also theorems \cite{Chat_Jaf:2015} \cite[Theorem 2]{lasso_cv} which make much weaker distributional assumptions but only prove convergence of the (in-sample) error at the "slow" rate $\mathcal{O}(r \sqrt{\frac{\log d}{n}})$ or slower. Though this rate is basically minimax \cite{rigollet2011} for the model 
\begin{equation}
Y = \langle X, \theta_* \rangle + \varepsilon, \mathbb{E}[\varepsilon | X] = 0, \mathbb{E}[\varepsilon^2 | X] \leq 1, X \in \mathbb{R}^d, \Norm{\theta_*}_{\ell^1} \leq r,
\end{equation}
a hyperparameter selection method should adapt also to the favorable cases where the Lasso converges faster (\cite[Theorem 14]{Koltch:2011}); these results do not show that CV has this property.

Thus, the theoretical justification for the use of standard CV, which selects a single hyperparameter by minimizing the CV risk estimator, is somewhat lacking. In fact, two of the articles mentioned above introduce variants of CV which modify the final hyperparameter selection step; a bagged CV in \cite{lecue2012} and the aggregation of two hold-out predictors in \cite{Chat_Jaf:2015}.
In practice too, there is reason to consider alternatives to hyperparameter selection in sparse regression: sparse estimators are unstable, and selecting only one estimator can result in arbitrarily ignoring certain variables among a correlated group with similar predictive power \cite{sparse_unstable}. For the Lasso, these difficulties have motivated researchers to introduce several aggregation schemes, such as the Bolasso \cite{bolasso}, stability selection \cite{stab_select}, the lasso-zero \cite{lasso0} and the random lasso \cite{wang2011}, which are shown to have some better properties than the standard Lasso.

Since aggregating the Lasso seems to be advantageous, it seems logical to consider aggregation rather than selection to handle the free hyperparameters.
In this article, we consider the application to sparse regression of the aggregated hold-out procedure. Aggregated hold-out (agghoo) is a general aggregation method which mixes cross-validation with bagging. It is an alternative to cross-validation, with a comparable level of generality. In a previous article with Sylvain Arlot and Matthieu Lerasle \cite{agghoo_rkhs}, we formally defined and studied Agghoo, and showed empirically that it can improve on cross-validation when calibrating the level of regularization for kernel regression. Though we came up with the name and the general mathematical definition, Agghoo has already appeared in the applied litterature in combination with sparse regression procedures \cite{HoyosIdrobo2015}, among others \cite{Varoquaux2017}, under the name "CV + averaging" in this case. 

In the present article, the aim is to study the application of Agghoo to sparse regression with a robust loss function. Theoretically, assuming an $L^\infty - L^2$ norm inequality to hold on the set of sparse linear predictors, it is proven that Agghoo satisfies an asymptotically optimal oracle inequality. This result applies also to cross-validation with one split (the so-called hold-out), yielding a new oracle inequality which allows norms of the sparse linear predictors to grow polynomially with the sample size. Empirically, Agghoo is compared to cross-validation in a number of simulations, which investigate the impact of correlations in the design matrix and sparsity of the ground truth on the performance of aggregated hold-out and cross-validation. Agghoo appears to perform better than cross-validation when the number of non-zero coefficients to be estimated is not much smaller than the sample size. The presence of confounders correlated to the predictive variables also favours Agghoo relative to cross-validation.

\section{Setting and Definitions}
The problem of non-parametric regression is to infer a predictor $t: \cX \rightarrow \mathbb{R}$ from a dataset $(X_i,Y_i)_{1 \leq i \leq n}$ of pairs, where $X_i \in \cX$ and $Y_i \in \mathbb{R}$. The pairs will be assumed to be i.i.d, with joint distribution $P$. The prediction error made at a point $(x,y) \in \cX \times \mathbb{R}$ is measured using a non-negative function of the residual $\phi(y-t(x))$. The global performance of a predictor is assessed on a new, independent data point $(X,Y)$ drawn from the same distribution $P$ using the risk $\risk(t) = E[\phi(Y-t(X))]$. The optimal predictors $\bayes$ are characterized by $\bayes(x) \in \argmin_u E[\phi(Y-u) | X = x]$ a.s. The risk of any optimal predictor is (in general) a non-zero quantity which characterizes the intrinsic amount of ``noise'' in $Y$ unaccounted for by the knowledge of $X$. A predictor $t$ can be compared with this benchmark by using the \emph{excess risk} $\loss{t} = \risk(t) - \risk(\bayes)$. Taking $\phi(x) = x^2$ yields the usual least-squares regression, where $s(x) = E[Y | X = x]$ and $\loss{t} = \Norm{(s - t)(X)}_{L^2}^2$. However, the least-squares approach is known to suffer from a lack of robustness \cite[Chapter 7]{Huber2009}. For this reason, in the field of robust statistics, a number of alternative loss functions are used. One popular choice was introduced by Huber \cite{huber1964}.

\begin{Definition} \label{def_hub}
Let $c > 0$. Huber's loss function is $\phi_c(u) = \frac{u^2}{2} \mathbb{I}_{|u| \leq c} + c\left( |u| - \frac{c}{2} \right) \mathbb{I}_{|u| > c} $.
%  Denote by $\cfun$ the contrast function:
%  \[ \cfun: L^1(P^x) \times \cX \times \mathbb{R} \rightarrow \mathbb{R}_+, \cfun(t,(x,y)) = \phi(t(x) - y). \]
%  The following notations will also be used:
%  $\cfun(t): (x,y) \rightarrow \cfun(t,(x,y))$. 
\end{Definition}

When $c \rightarrow + \infty$, $\phi_c$ converges to the least-squares loss.
When $c \rightarrow 0$, $\frac{1}{c} \phi_c$  converges to the absolute value loss $x \rightarrow |x|$ of median regression. Thus, the $c$ parameter allows a trade-off between robustness and approximation of the least squares loss. 

The rest of the article will focus on sparse linear regression with the loss function $\phi_c$. Thus, notations $\bayes$, $\loss{t}$ and $\risk$ are to be understood with respect to $\phi_c$. 
\subsection{Sparse linear regression} \label{sec_hub}

With finite data, it is impossible to solve the optimization problem $\min \risk(t)$ over the set of all predictors $t$. Some modeling assumptions must be made to make the problem tractable. A popular approach is to build a finite set of features $(\psi_j(X))_{1 \leq j \leq d}$ and consider predictors that are linear in these features: $\exists \theta \in \mathbb{R}^d, \forall x \in \cX, t(x) = \sum_{j = 1}^d \theta_j \psi_j(x)$. This is equivalent to replacing $X \in \cX$ with $\tilde{X} = (\psi_j(X))_{1 \leq j \leq d} \in \mathbb{R}^d$ and regressing $Y$ on $\tilde{X}$. For theoretical purposes, it is thus equivalent to assume that $\cX = \mathbb{R}^d$ for some $d$ and predictors are linear: $t(x) = \theta^T x$.

As the aim is to reduce the risk $\risk(t)$, a logical way to choose $\theta$ is by \emph{empirical risk minimization}:
\[\hat{\theta} \in \argmin_{\theta \in \mathbb{R}^d} \frac{1}{n} \sum_{i = 1}^n \phi_c(Y_i - \theta^T X_i).\]
Empirical risk minimization works well when $d \ll n$ but will lead to overfitting in large dimensions \cite{Vapnik:1999}.
Indeed, if $d$ is too large, no estimator can succeed at minimizing the risk over $\mathbb{R}^d$, as the following proposition shows.

\begin{proposition} \label{prop_minimax_huber_reg}
Let $\sigma > 0$ and $\Sigma$ be a positive definite matrix of dimension $d$.
For any $\theta \in \mathbb{R}^d$, let $P_\theta$ denote the distribution such that $(X,Y) \sim P_{\theta}$ iff almost surely, $Y = \langle \theta, X \rangle + \sigma \varepsilon$, where $X \sim \mathcal{N}(0,\Sigma)$, $\varepsilon \sim \mathcal{N}(0,1)$ and $\varepsilon,X$ are independent. 
 Then for any $n > d$,
\[ \inf_{\hat{\theta}} \sup_{\theta \in \mathbb{R}^d} \mathbb{E}_{D_n \sim P_{\theta}^{\otimes n}} \left[\ell\bigl(\theta^T,\hat{\theta}(D_n)^T \bigr) \right] 
\geq E \bigl[ \min(\sigma^2 \varepsilon^2, c \sigma |\varepsilon| ) \bigr] 
\left( \sqrt{1 + \frac{2 d}{\pi n}} - 1 \right), \]
where $\inf_{\hat{\theta}}$ denotes the infimum over all estimators and $\theta^T$ denotes the linear functional $x \mapsto \langle \theta, x \rangle$.
\end{proposition} 

Proposition \ref{prop_minimax_huber_reg} is proved in appendix \ref{proof_prop_minimax}. 
With respect to $\sigma$, the lower bound of proposition \ref{prop_minimax_huber_reg} scales as $\sigma^2$ when $\sigma \ll c$ and 
as $c\sigma$ when $\sigma \gg c$, as could be expected from the definition of the Huber loss (Definition \ref{def_hub}). With respect to $d$ and $n$, it scales as $\frac{d}{n}$ when $d \ll n$. Moreover, there is a positive lower bound on the minimax risk when $d$ is of order $n$. 
Thus, for such large values of $d$, consistent risk minimization cannot be achieved uniformly over the whole of $\mathbb{R}^d$.

Sparse regression attempts instead to locate a ``good'' subset of variables in order to optimize risk for a given model dimension.
The Lasso \cite{Tibshirani1996} is now a standard method of achieving sparsity. The specific version of the Lasso which we consider here is given by the following definition.

\begin{Definition} \label{def_lasso}
Let $n \in \mathbb{N}$ and let $D_n = (X_i,Y_i)_{1 \leq i \leq n}$ be a dataset 
such that $X_i \in \mathbb{R}^d$ and $Y_i \in \mathbb{R}$ for all $i \in [|1;n|]$ and some $d \in \mathbb{N}$.
 Let $\phi_c$ be the Huber loss defined in Definition \ref{def_hub}. For any $r \geq 0$, let
 \[ \hat{\mathcal{C}}(r) = \argmin_{(q,\theta) \in \mathbb{R}^{d+1}: \Norm{\theta}_1 \leq r} \frac{1}{n} \sum_{i = 1}^n 
 \phi_c\bigl(Y_i - q - \theta^T X_i \bigr) \text{ and }\]
 \begin{equation} \label{eq_tie_break}
  ( \hat{q}(r), \hat{\theta}(r) ) \in \argmin_{(q,\theta) \in \hat{\mathcal{C}}(r)} \bigl| q  + <\theta, \frac{1}{n} \sum_{i = 1}^n X_i> \bigr|.
 \end{equation}
 Now let 
 \[ \learnrule^{lasso}(r)(D_n) : x \rightarrow \hat{q}(r) + \hat{\theta}(r)^T x.  \]
\end{Definition}
The intercept $q$ is left unconstrained in definition \ref{def_lasso}, as is usually the case in practice \cite{hqreg:2017}. 
Equation \eqref{eq_tie_break} is a tiebreaking rule which simplifies the theoretical analysis.

\subsection{Hyperparameter tuning}

The zero-norm of a vector $\theta$ is the integer $\Norm{\theta}_0 = |\{i : \theta_i \neq 0 \}|$. Many sparse estimators, such as best subset or forward stepwise \cite[Section 3.3]{Hastie2009}, are directly parametrized by their desired zero-norm, which must be chosen by the practitioner. It controls the ``complexity'' of the estimator, and hence the bias-variance tradeoff.  
In the case of the standard Lasso (Definition \ref{def_lasso} with $\phi(x) = x^2$), Zou, Hastie and Tibshirani \cite{Zou_Has_Tib:2007} 
showed that $\Norm{\hat{\theta}(\lambda)}_0$ is an unbiased estimator of the ``degrees of freedom'' of the estimator $\learnrule(\lambda)$. 
As a consequence, \cite{Zou_Has_Tib:2007} suggests reparametrizing the lasso by its zero-norm. Applying their definition to the present setting yields the following.

\begin{Definition} \label{def_param_lasso}
For any dataset $D_n$, let $(\hat{q},\hat{\theta})$ be given by Definition \ref{def_lasso}, equation \eqref{eq_tie_break} .
Let $M \in \mathbb{N}$ and $(r_m)_{1 \leq m \leq M}$ be the finite increasing sequence at which the sets $\{ i : \hat{\theta}(r)_i \neq 0 \}$ change. Let $r_0 = 0$.
 For any $k \in \mathbb{N}$ let
 \[ \hat{m}^{last}_{k,R} = \max \left\{ m \in \mathbb{N} | \bigl\| \hat{\theta}(r_m) \bigr\|_0 = k \text{ and } r_m \leq R \right\},  \]
 with the convention $\max \emptyset = 0$.
 Let then
 \begin{equation}
  \learnrule^{lasso}_{k,R} (D_n) = \learnrule^{lasso} \left( r_{\hat{m}^{last}_{k,R}} \right) \left(D_n \right).
 \end{equation}
 Let $\learnrule^{lasso}_{k} = \learnrule^{lasso}_{k,+ \infty}$ 
 denote the unconstrained sequence (corresponding to \cite{Zou_Has_Tib:2007}'s original definition).
\end{Definition}
The (optional) constraint $\Norm{\hat{\theta}(r_m)}_{\ell^1} \leq r_m \leq R$ has some potential practical and theoretical benefits. From the practical viewpoint, it allows to reduce the computational complexity by excluding lasso solutions with excessively large $\ell^1$ norm, which may be expected to perform poorly anyway. From a theoretical viewpoint, it helps control the $L^p$ norms of the predictor $\langle \hat{\theta}(r_m), X \rangle$, thus avoiding inconsistency issues encountered by the empirical risk minimizer for some pathological designs \cite{Mourtada:2019} .
%ensures that the $\hat{\theta}(r_m)$ do not become too large when the design matrix is ill-conditioned.
%Without it, the $\ell^1$ norm of Lasso solutions is upper bounded by the minimal $\ell ^1$ norm $\hat{R}$ of an empirical
%risk minimizer on the whole set of variables. Hence, if $\hat{R} \leq R$,
%Definition \ref{def_param_lasso} coïncides exactly with the method proposed by \cite{Zou_Has_Tib:2007}. 

More generally, consider \emph{any} sequence $\left(\learnrule_k \right)_{k \in \mathbb{N}}$ of learning rules which output linear predictors $\learnrule_k(D_n): x \rightarrow \hat{q}_k(D_n) + \langle \hat{\theta}_k(D_n), x \rangle $. To prove the main theoretical result of this article (Theorem \ref{agcv_hub}), we make the following assumptions on the collection $\left(\learnrule_k \right)_{k \in \mathbb{N}}$.
\begin{hyp} \label{hyp_sparse_reg}
For any $n \in \mathbb{N}$, let $D_n \sim P^{\otimes n}$ denote a dataset of size $n$. Assume that
 \begin{enumerate}
   \item Almost surely, for all $k \in [|1;n|]$, $\Norm{\hat{\theta}_k(D_n)}_0 \leq k$.

 \item For all $k \in [|1;n|]$, $\hat{q}_k(D_n) \in \argmin_{q \in \hat{Q}\left( D_n,\hat{\theta}_k(D_n) \right)} \left| q + \langle\hat{\theta}_k(D_n), \frac{1}{n} \sum_{i = 1}^n X_i \rangle  \right|$, 
 
 where
 $\hat{Q}(D_n,\theta) = \argmin_{q \in \mathbb{R}} \frac{1}{n} \sum_{i = 1}^n \phi_c \left( Y_i - \langle \theta, X_i \rangle - q \right).$
 \end{enumerate}
\end{hyp}
For the reparametrized Lasso given by definition \ref{def_lasso} and \ref{def_param_lasso}, hypothesis \ref{hyp_sparse_reg} holds by construction.
%In the following, we shall give conditions which ensure that hypothesis 
%2 holds. Truncation ($R < +\infty$) will be useful there. 

Moreover, condition 1 is naturally satisfied by 
 such sparse regression methods as forward stepwise and best subset \cite[Section 3.3]{Hastie2009}.
 Condition 3 states that the intercept $q$ is chosen by empirical risk minimization, with a specific tie-breaking rule in case the minimum is not unique.

\subsection{Aggregated hold out applied to the zero-norm parameter}

The tuning of the zero-norm $k$ is important to ensure good prediction performance by optimizing the bias-variance tradeoff. 
%In the sparse regression context, it is reasonable to assume that $k < n$: since $n$ non-zero coefficients already suffice for perfect interpolation of the $(X_i,Y_i)$, adding even more variables would go against the sparse regression philosophy. Indeed, for several important methods, taking $k > n$ may be impossible (in the case of the Lasso and LARS) or lead to an ill-defined estimator because of multiple minima (case of best-subset or forward stepwise).
%In many important cases, including the Lasso, best subset or forward stepwise,  only regression vectors $\hat{\theta}$ with zero-norm $\Norm{\hat{\theta}_k}$ it does not make sense to consider values of $k > n$ as the empirical risk cannot decrease further than that, the algorithm terminates.
Depending on the application, practicioners may want more or less sparsity, depending on their requirements in terms of computational load or interpretability. 
For this reason, we consider the problem of selecting the zero-norm among the set $\{1,\ldots,K\}$, for some $K \in \mathbb{N}$ which may depend on the sample size. This article investigates the use of Agghoo in this context, as an alternative to cross-validation. Agghoo is a general hyperparameter aggregation method which was defined in \cite{agghoo_rkhs}, in a general statistical learning context.
Let us briefly recall its definition in the present setting. For a more detailed introductory discussion of this procedure, we refer the reader to \cite{agghoo_rkhs}. 
 To simplify notations, fix a collection $(\hat{q}_k, \hat{\theta}_k)_{1 \leq k \leq K}$ of linear regression estimators.
First, we need to define \emph{hold-out} selection of the zero-norm parameter.
\begin{Definition} \label{def_ho}
 Let $D_n = (X_i,Y_i)_{1 \leq i \leq n}$ be a dataset. For any $T \subset \{1,\ldots,n\}$, denote $D_n^T = (X_i,Y_i)_{i \in T}$.
 Let then
 \[ \hat{k}_T(D_n) = \min \argmin_{1 \leq k \leq K} \frac{1}{|T^c|} \sum_{i \notin T} \phi_c \left(Y_i - \hat{q}_k(D_n^T) - \langle \hat{\theta}_k(D_n^T),X_i  \rangle \right). \]
 Using the hyperparameter $\hat{k}_T(D_n)$ together with the dataset $D_n^T$ to train a linear regressor yields the \emph{hold-out predictor}
 \[ \ERMho{T}(D_n): x \rightarrow \hat{q}_{\hat{k}_T(D_n)}(D_n^T) + \langle \hat{\theta}_{\hat{k}_T(D_n)}(D_n^T), x \rangle. \]
\end{Definition}
Aggregation of hold-out predictors is performed in the following manner.
\begin{Definition} \label{def_agghoo}
 Let $\cT = (T_1,\ldots,T_V)$ be a collection of subsets of $\{1,\ldots,n\}$,
 where $V = |\cT|$. Let:
 \begin{align*}
  \hat{\theta}^{ag}_{\cT} &= \frac{1}{V} \sum_{i = 1}^V \hat{\theta}_{\hat{k}_{T_i}(D_n)}(D_n^{T_i}) \\
  \hat{q}^{ag}_{\cT} &= \frac{1}{V} \sum_{i = 1}^V \hat{q}_{\hat{k}_{T_i}(D_n)}(D_n^{T_i}).
 \end{align*}
Agghoo outputs the linear predictor:
\[ \ERMag{\cT}(D_n): x \rightarrow \hat{q}^{ag}_{\cT} + \langle \hat{\theta}^{ag}_{\cT}, x  \rangle . \]
\end{Definition}

Thus, Agghoo also yields a linear predictor, which means that it can be efficiently evaluated on new data. If the $\hat{\theta}_{\hat{k}_T(D_n)}$ have similar support, $\hat{\theta}^{ag}_{\cT}$ will also be sparse: this will happen if the hold-out reliably identifies a true model. On the other hand, if the supports have little overlap, the Agghoo coefficient will lose sparsity, but it can be expected to be more stable and to perform better.

The linear predictors $x \rightarrow \hat{q}_{\hat{k}_{T_i}(D_n)}(D_n^{T_i}) + \langle \hat{\theta}_{\hat{k}_{T_i}(D_n)}(D_n^{T_i}), x\rangle$ aggregated by Agghoo are only trained on part of the data. This subsampling (typically) decreases the performance of each individual estimator, but combined with aggregation, it may stabilize an unstable procedure and improve its performance, similarly to bagging. 

An alternative would be to \emph{retrain} each regressor on the whole data-set $D_n$, yielding the following procedure, which we call "Aggregated cross-validation" (Agcv).

\begin{Definition} \label{def_acv}
 Let $\cT = (T_1,\ldots,T_V)$ be a collection of subsets of $\{1,\ldots,n\}$,
 where $V = |\cT|$. Let:
 \begin{align*}
  \hat{\theta}^{acv}_{\cT} &= \frac{1}{V} \sum_{i = 1}^V \hat{\theta}_{\hat{k}_{T_i}(D_n)}(D_n) \\
  \hat{q}^{acv}_{\cT} &= \frac{1}{V} \sum_{i = 1}^V \hat{q}_{\hat{k}_{T_i}(D_n)}(D_n).
 \end{align*}
The output of Agcv is the linear predictor:
\[ \ERMacv{\cT}(D_n): x \rightarrow \hat{q}^{acv}_{\cT} + \langle \hat{\theta}^{acv}_{\cT}, x  \rangle . \]
\end{Definition}

Agghoo is easier to study theoretically than Agcv due to the conditional independence: $\left( \hat{\theta}_k \bigl(D_n^T \bigr) \right)_{1 \leq k \leq K} \independent \hat{k}_T(D_n) \ \Bigl| D_n^T $. For this reason, the theoretical section will focus on Agghoo, while in the simulation study, both Agghoo and Agcv will be considered.

In comparison to Agghoo and Agcv, consider the following definition of a general cross-validation method.

\begin{Definition} \label{def_cv}
Let $\cT = (T_1,\ldots,T_V)$ be a collection of subsets of $\{1,\ldots,n\}$,
 where $V = |\cT|$.
Let
 \[ \hat{k}_{\cT}^{cv}(D_n) = \min \argmin_{1 \leq k \leq K} \frac{1}{V} \sum_{j = 1}^V \frac{1}{|T_j^c|} \sum_{i \notin T_j} \phi_c \left(Y_i - \hat{q}_k(D_n^{T_j}) - \langle \hat{\theta}_k(D_n^{T_j}),X_i  \rangle \right). \]
 Let then
 \begin{align*}
 \hat{\theta}^{cv}_{\cT} &= \hat{\theta}_{\hat{k}_{\cT}^{cv}(D_n)}(D_n) \\
  \hat{q}^{acv}_{\cT} &= \hat{q}_{\hat{k}_{\cT}^{cv}(D_n)}(D_n).
 \end{align*}
 CV outputs the linear predictor 
 \[ \ERMcv{\cT}(D_n): x \rightarrow \hat{q}^{cv}_{\cT} + \langle \hat{\theta}^{cv}_{\cT}, x  \rangle . \]
\end{Definition}

This makes clear the difference between cross-validation and Agghoo (or Agcv): cross-validation averages the hold-out \emph{risk estimates} (and selects a \emph{single} linear predictor) whereas Agghoo and Agcv aggregate the \emph{selected predictors} $(\hat{q}_{\hat{k}_{T_i}}, \hat{\theta}_{\hat{k}_{T_i}})$. 
If the parameter $\hat{k}_{\cT}^{cv}$ is used instead of the $\hat{k}_{T_i}$ in Definition \ref{def_agghoo}, this yields the bagged CV method of Lecué and Mitchell \cite{lecue2012}. This method applies bagging to individual estimators $\hat{q}_k, \hat{\theta}_k$, whereas Agghoo also bags the \emph{estimator selection} step. When there is a single, clearly established optimal model of small dimension, the advantages of a more accurate model selection step (as in CV and its bagged version) may outweigh the gains due to aggregation. In contrast, when there are many different sparse linear predictors with close to optimal performance, model selection will be unstable and aggregation should provide benefits relative to selection of a single parameter $k$.

\subsection{Computational complexity}
There are two types of computational costs to take into account
when considering a (sparse) linear predictor such as $\ERMag{\cT}(D_n)$: the cost of \emph{calculating} the parameters $\hat{q}^{ag}_{\cT}(D_n), \hat{\theta}^{ag}_{\cT}(D_n)$ at \emph{training time} and the cost of \emph{making a prediction} on new data, i.e computing $\ERMag{\cT}(D_n)(x)$ for some $x$.
In this section, Agghoo, Agcv and cross-validation are compared with respect to these two types of complexity.

Let $(\hat{q}_k, \hat{\theta}_k)_{1 \leq k \leq K}$ be some finite collection of sparse linear regression estimators.
Let $S(n) = \mathbb{E} \left[\max_{1 \leq k \leq K} \Norm{\hat{\theta}_k(D_n)}_0 \right]$ denote the expected maximal number of non-zero coefficients. 
%$\hat{S}_i = \max_{1 \leq k \leq K} \Norm{\hat{\theta}_k(D_n^{T_i})}_0$, 
%and let $S = \mathbb{E}[\hat{S}_1]$ be the common expectation of the $\hat{S}_i$. 
In particular, under point 1 of hypothesis \ref{hyp_sparse_reg},
$S(n) \leq K$.  Let $V = |\cT|$ and $n_v = n - n_t$, where $n_t$ is given by hypothesis \textbf{(Reg-}$\cT$\textbf{)} below (equation \ref{hyp.T}).

\paragraph{Computational complexity at training time}

 Agghoo, Agcv and cross-validation must all compute the hold-out risk estimator for each subset in $\mathcal{T}$ and each $k \in \{1,\ldots,K\}$. Let $\hat{C}_{hos}$ denote the number of operations needed for this. 

For a given subset $T_i$, the estimators $\hat{q}_k(D_n^{T_i}), \hat{\theta}_k(D_n^{T_i})$ must be computed for all $k$, which may be more or less expensive depending on the method. In the case of the Lasso, the whole path can be computed efficiently using the LARS-Lasso algorithm \cite{efron2004}. 
%In general, $S(n_t) n_t$ is a reasonable lower bound for the average complexity of this step, certainly if empirical risk minimization is involved, since an average of $S(n_t)$ coefficients are fitted using all of the training data. 

Then, the empirical risk of all estimators must be calculated on the test set. On average, this takes at least $S(n_t) n_v$ operations to compute the risk of the least sparse $\hat{\theta}_k$ ($n_v$ scalar products involving an average of $S(n_t)$ non-zero coefficients) and at most $\mathcal{O}(K S(n_t) n_v)$ operations in general. In particular, $\mathbb{E}[\hat{C}_{hos}] \geq V S(n_t) n_v$. 

In a next step, Agghoo and agcv compute the minima of $V$ vectors of length $K$, whereas cross-validation averages these vectors and calculates the argmin of the average. Both operations have complexity of order $VK$.

It is in their final step that the three methods differ slightly. 
Agghoo uses the $\hat{\theta}_{\hat{k}_{T_i}}(D_n^{T_i})$ which have been computed in a previous step, whereas
Agcv and cross-validation must compute the $\hat{\theta}_{\hat{k}_{T_i}}(D_n)$ and $\hat{\theta}_{\hat{k}^{cv}_\cT}(D_n)$, respectively. The complexity of this depends on the method, but can be expected to be small compared to $\hat{C}_{hos}$, as there is only one estimator to fit instead of $K$.   

Finally, Agghoo and Agcv must aggregate $V$ vectors drawn from the $\hat{\theta}_k(D_n^{T_i})$ and $\hat{\theta}_k(D_n)$, with respective complexity $\mathcal{O}(V S(n_t))$ and $\mathcal{O}(VS(n))$, provided that a suitably "sparse" representation is used  for the $\hat{\theta}_k$. Assuming $S(n) \approx S(n_t)$, this is negligible compared to $\mathbb{E}[\hat{C}_{hos}]$.

All in all, Agghoo, Agcv and cross-validation have a similar complexity at training time, of order $\mathbb{E}[\hat{C}_{hos}] + VK$, with $\mathbb{E}[\hat{C}_{hos}]$ most likely being the dominant term. 

\paragraph{Evaluation on new data}

Given new data $x$, the complexity of evaluating $q+ \langle \theta, x \rangle$ is proportional to $\Norm{\theta}_0$. If the sparse estimators $\hat{\theta}_k$ perform as intended and consistently identify similar subsets of predictive variables, then Agghoo and Agcv sould not lose much sparsity compared to CV, as the $\hat{\theta}_{\hat{k}_{T_i}}(D_n^{T_i}), \hat{\theta}_{\hat{k}_{T_i}}(D_n)$ and $\hat{\theta}^{cv}_{\cT}$ should all have similar supports.

At worst, if the supports of the $\hat{\theta}_{\hat{k}_{T_i}}(D_n^{T_i})$ are disjoint, $\Norm{\hat{\theta}^{ag}_{\cT}}_0$  may be as much as $V$ times greater than $\Norm{\hat{\theta}_{\hat{k}_{T_1}}(D_n^{T_1})}_0$. In contrast, $\Norm{\hat{\theta}^{cv}_{\cT}}_0 = \Norm{\hat{\theta}_{\hat{k}_{\cT}^{cv}(D_n)}(D_n)}$ should heuristically be of the same order as  $\Norm{\hat{\theta}_{\hat{k}_{T_1}}(D_n^{T_1})}_0$ -- as both $\hat{k}_{\cT}^{cv}$ and $\hat{k}_{T_1}$ optimize the same bias-variance tradeoff with respect to the "complexity parameter" $k$ . However, this situation is one in which the hold-out is very unstable, so Agghoo can be expected to yield significant improvements in exchange for the increased computational cost. The same argument applies to agcv.

\section{Theoretical results}
Let $n \in \mathbb{N}$ and $D_n = (X_i, Y_i)_{1 \leq i \leq n}$ denote an i.i.d dataset with common distribution $P$. Let $\left(\hat{q}_k,\hat{\theta}_k \right)_{1 \leq k \leq K}$ be a collection of linear regressors which satisfies assumption \ref{hyp_sparse_reg}. Let $\cT$ be a collection of subsets of $\{1,\ldots,n\}$. In this section, we give bounds for the risk of the Agghoo estimator $\ERMag{\cT}$ (Definition \ref{def_agghoo}) built from the collection $\left(\hat{q}_k,\hat{\theta}_k \right)_{1 \leq k \leq K}$.

\subsection{Hypotheses}
To state and prove our theoretical results, a number of hypotheses are required. 
First, the collection of subsets $\cT$ - chosen by the practitioner - should satisfy the following two conditions.
\paragraph{(Reg$-\cT$)}
There exists an integer $n_t$ such that $\max(3,\tfrac{n}{2}) \leq n_t < n$ and
\begin{equation} \label{hyp.T}
\begin{split}
 \cT &\subset \{ T \subset \{ 1,\ldots,n \} : |T| = n_t \} \\
 \cT &\text{ is independent from } D_n \enspace.
\end{split}
\end{equation} 
Let also $n_v = n - n_t$ denote the size of the validation sets.

\paragraph{}
Independence of $\cT$ from $D_n$ ensures that for $T \in \cT$, $D_n^T$ is also iid with distribution $P$. The assumption that $\cT = (T_1, \ldots, T_V)$ contains sets of equal size ensures that the pairs $\hat{q}_{\hat{k}_{T_i}(D_n)}(D_n^{T_i}), \hat{\theta}_{\hat{k}_{T_i}(D_n)}(D_n^{T_i})$ are equidistributed for $i \in \{1,\ldots,V\}$. Most of the data partitioning procedures used for cross-validation satisfy hypothesis \textbf{(Reg-}$\cT$ \textbf{)},
including leave-$p$-out, $V$-fold cross-validation (with $n - n_t = n_v =n/V$) and 
Monte-Carlo cross-validation \cite{Arl_Cel:2010:surveyCV}. 

To state an upper bound for $\loss{\ERMag{\cT}}$, we also need to quantify the amount of noise in the distribution of $Y$ given $X$, in a way appropriate to the Huber loss $\phi_c$. That is the purpose of the following assumption.  
%  If $|\cM|$ is polynomial in $n$, choosing a constant $a$ is a fair choice.
%  But if $|\cM| = 2$ and models verify $\NormInfinity{\cdot} \leq \sqrt{\frac{n_v}{\log n_v}} \Norm{\cdot}_2$,
%  then it is better to choose $a = \mathcal{O}(\log n_v)$.
 
% 
% A hypothesis of type $H_{\hat{w}_{2,1}, \hat{w}_{2,2}}$ is satisfied in many settings of interest.
% This makes theorem \ref{agcv_mean} a versatile tool for obtaining oracle inequalities in a wide variety of situations. In this section, we will 
% give several examples of practical (and theoretical) importance.

\paragraph{(Lcs)}
Let $(X,Y) \sim P$.
 Let $\bayes$ denote an optimal predictor, i.e a measurable function $\mathbb{R}^{d} \to \mathbb{R}$ such that $s(x) \in \argmin_{u \in \mathbb{R}} \mathbb{E}[\phi_c(Y - u) | X = x]$ for almost all $x \in \mathbb{R}^{d}$. Assume that there exists $\bayes$ and a positive real number $\eta$ such that
 \begin{equation} \label{hyp_noise}
  P\left[ |Y - \bayes(X)| \leq \frac{c}{2} \ \Big| X \right] \geq \eta \text{ a.s},
 \end{equation}
 where $c$ denotes the parameter of the Huber loss.
\paragraph{}
Equation \eqref{hyp_noise} is specific to the Huber loss: it requires the conditional distribution of the residual $Y - \bayes(X)$ to put sufficient mass in a region where the Huber function $\phi_c$ is quadratic. 
% This assumption takes into account the amount of noise in a way appropriate to the Huber loss.
For example, assume that $Y = s(X) + \sigma \varepsilon$ where $\varepsilon$ is independent from $X$ and has a continuous, positive density $q$ in a neighbourhood of $0$. If the Huber parameter $c$ is proportional to or larger than $\sigma$, then a constant value of $\eta$ can be chosen, independently of $\sigma$. On the other hand, if $c \ll \sigma$, the optimal value of $\eta$ satisfies $\eta = \eta(\sigma) \sim_{\frac{c}{\sigma} \to 0} \frac{q(0)c}{2 \sigma}$. 
%so the remainder term of equation \eqref{or_ineq_hub} is proportional to $\sigma^2$, as in least-squares regression. 

Finally, some hypotheses are needed to deal with pathological design distributions which can in general lead to inconsistency of empirical risk minimization \cite{Mourtada:2019}. To illustrate the problem as it applies to the hold-out, consider a distribution $P$ such that $0 < P(X \in H) < 1$ for some vector subspace $H$, as in \cite{Mourtada:2019}. Assume to simplify that $Y = \langle \theta_*,X \rangle + \varepsilon$.
Let $p_H$ denote the orthogonal projection on $H$.
With small, but positive probability, $X_i \in H$ for all $i \in \{1,\ldots,n\}$.
On this event, it is clearly impossible to estimate $\theta_* - p_H(\theta_*)$. Likewise, the hold-out cannot correctly assess the impact of the orthogonal components $\hat{\theta}_k - p_H(\hat{\theta}_k)$ of the estimators $\hat{\theta}_k$ on the risk, since $\langle \hat{\theta}_k, X_i \rangle $ only depends on $p_H(\hat{\theta}_k)$, whereas out of sample predictions $\langle \hat{\theta}_k, X \rangle$ may depend on $\hat{\theta}_k - p_H(\hat{\theta}_k)$ (since $P(X \in H) < 1$).  
This means that the hold-out-selected predictors $\ERMho{T_i}$ may be arbitrarily far from optimal in general. 

To avoid this issue, two sets of assumptions have been made in the litterature. First, there are boundedness assumptions: for example, if the predictors $ \hat{q}_k + \langle \hat{\theta}_k, X \rangle$ and the variable $Y$ are uniformly bounded, this clearly limits the impact of low-probability events such as $\{ \forall i \in \{1,\ldots,n \}, X_i \in H \}$ on the risk.
Such hypotheses have been used to prove general oracle inequalities for the hold-out \cite[Chapter 8]{Gyrfi2002} \cite[Corollary 8.8]{Mas:2003:St-Flour} and cross-validation \cite{vdL_Dud_vdV:2006}. Alternatively, pathological designs can be excluded from consideration by assuming an $L^p - L^q$ norm inequality or "small ball" type condition \cite{Mendelson2014LearningWC, Mendelson2018}: this has been used to study empirical risk minimization over linear models  \cite{Mourtada:2019, audibert2011}.  

In this article, a combination of both approaches is used.    
First, we assume a weak uniform upper bound on $L^1$ norms of the predictors (hypothesis \textbf{(Uub)}). The bound is allowed to grow with $n_t$ at an arbitrary polynomial rate. 

\paragraph{(Uub)}
Let $(X_i,Y_i)_{1 \leq i \leq n_t} = D_{n_t}$ be iid with distribution $P$, where $n_t$ is given by hypothesis \textbf{(Reg-}$\cT$ \textbf{)}. Let $X \sim X_1$ be independent from $D_{n_t}$. 
 There exist real numbers $L,\alpha$ such that %for all $n \in \mathbb{N},$ 
 \begin{enumerate}
 \item $\mathbb{E}\Bigl[ \max_{1 \leq k \leq n_t } \max_{1 \leq i \leq n_t} \bigl| \langle \hat{\theta}_k(D_{n_t}), X_i - EX \rangle \bigr| \Bigr] \leq L n_t^{\alpha}$  
 \item $\mathbb{E}\Bigl[ \max_{1 \leq k \leq n_t } \mathbb{E} \left[ \bigl| \langle \hat{\theta}_k(D_{n_t}), X - EX \rangle \bigr| | D_{n_t} \right] \Bigr] \leq L n_t^{\alpha}.$ 
 \end{enumerate}

\paragraph{}
For the Lasso, if $R \leq n_t^{\alpha_1}$ in Definition \ref{def_lasso}, then hypothesis \textbf{(Uub)}
holds if in addition $E \left[ \NormInfinity{X - E X} \right] \leq n_t^{\alpha - \alpha_2}$. This is the case if the components of $X$ have variance $1$ and $d $ is polynomial in $n$, or if the components of $X$ are sub-exponential 
with constant $1$ and $\log p$ is polynomial in $n$.

Hypothesis \textbf{(Uub)} is much weaker than boundedness assumptions usually made in the litterature, where typically the $L^\infty$ norm is used instead of the $L^1$ norm, and the bound is a constant rather than a polynomial function of $n_t$.
Point $1$ of Hypothesis \textbf{(Uub)} is natural in the sense that an estimator $\hat{\theta}_k$ which violates it cannot perform well anyway: assuming that $P(|Y|) < +\infty$ , by definition of $\phi_c$, for any $(q,\theta)$,
 \begin{align*}
 E \left[ \phi_c(Y - q - \langle \theta, X \rangle) \right]
 &\geq c E \left[ \bigl|Y - q - \langle \theta, X \rangle \bigr| \right] - \frac{c^2}{2} \\
 &\geq c E \left[ \bigl| q + \langle \theta, X \rangle \bigr| \right] - c E [|Y|] - \frac{c^2}{2} \\
 &\geq \frac{c}{2} E \left[ \bigl| \langle \theta, X - E X \rangle \bigr| \right] - c E [|Y|] - \frac{c^2}{2}. \numberthis \label{eq_comp_huber_risk_l1_norm}
\end{align*}   
 Thus, if $\mathbb{E}  \left[ \bigl| \langle \hat{\theta}_k(D_{n_t}), X - P X \rangle \bigr| \right]$ grows faster than $n_t^\alpha$, then so do the expected risk and expected excess risk  of $\learnrule_k(D_{n_t})$. 
% then the predictor $x \mapsto q + \langle \theta,x \rangle$ performs worse than the null predictor $x \to 0$ (
% which has risk $P[\phi_c(Y)] \leq c P [|Y|]$).
 Point $2$ of Hypothesis \textbf{(Uub)} can be seen as an "empirical version" of point 1, 
 wherein the independent variable $X$ is replaced by the elements of $D_{n_t}$.
 The lack of independence between $\hat{\theta}_k$ and $X_i$ makes this condition less straightforward than $1$. However, by the Cauchy-Schwarz inequality, it is always the case that $\mathbb{E} \left[ \bigl| \langle \hat{\theta}_k, X_i - P X_i \rangle \bigr|  \right] \leq \sqrt{d} \mathbb{E}[\langle \hat{\theta}_k, X - P X \rangle^2]^{\frac{1}{2}}$. Thus, it is enough to suppose that $d $ and $\mathbb{E}[\langle \hat{\theta}_k, X - P X \rangle^2]$ are bounded by $L n_t^{\alpha}$ for some $\alpha > 0$.
 
%  If, moreover, it is assumed that for all $\theta$, 
%  \[ P \left(\langle \theta, X - P X \rangle^2  \right)^{\frac{1}{2}} \leq \sqrt{n_t} P \left( \bigl| \langle \theta, X - P X \rangle \bigr|  \right) \] - an assumption we shall make later, as a consequence of equation \eqref{hyp_bound_K} of Theorem \ref{agcv_hub} - then $\mathbb{E}[\langle \hat{\theta}_k, X - E X \rangle^2] \leq L n_t^\alpha$ can be replaced by
%  \begin{equation} \label{eq_bd_l21_pred}
%  \mathbb{E} \left[ \bigl( P \bigl| \langle \hat{\theta}_k, X - P X \rangle 
%  \bigr| \bigr)^2 \right] \leq L n_t^{\alpha-1},
%  \end{equation}
% an assumption only slightly stronger than point 1. Equation \ref{eq_comp_huber_risk_l1_norm} implies that equation \eqref{eq_bd_l21_pred} is a necessary condition for the \emph{second moment} of the risk of $\learnrule_k(D_{n_t})$ to be bounded by a power of $n_t$. 

Together with the weak uniform bound \textbf{(Uub)}, we assume that for sparse linear predictors $x \mapsto \langle \theta, x - EX \rangle$ with $\Norm{\theta}_0 \leq K$, the $L^2$ norm is equivalent to the stronger "Orlicz norm" defined below.
%The weak uniform bound \textbf{(Uub)} is combined with a norm inequality (hypothesis \textbf{(Ni)} below) relating the $L^2$ norm of sparse predictors to the following "Orlicz norm".

\begin{Definition}  \label{def_orlicz}
Let $Z$ be a real random variable. Let $\psi_1: x \mapsto e^x - 1$.
The $\psi_1-$norm of $Z$ is defined by the formula
\[ \Norm{Z}_{L^{\psi_1}} = \inf \left\{ u > 0 : E \left[ 
\psi_1 \left( \frac{Z}{u} \right) \right] \leq 1 \right\}, \]
with the convention $\inf \emptyset = + \infty$. We say that $Z \in L^{\psi_1}$ if $\Norm{Z}_{L^{\psi_1}} < +\infty$. 
% For any measurable function $t: \mathbb{R}^p \to \mathbb{R}$, let also
% \[ \Norm{t}_{\psi_1(P)} := \Norm{t(X)}_{\psi_1}, \text{ where } (X,Y) \sim P \]
% $\Norm{\cdot}_{\psi_1(P)}$ is
% seen as a (quasi-)norm on the space of measurable functions. In particular, if $\hat{t}(D_n)$ is a random function, then $\Norm{\hat{t}}_{\psi_1(P)}$ is a $D_n-$ measurable random variable. 
\end{Definition}

Plainly, $\Norm{Z}_{L^{\psi_1}} < +\infty$ if and only if $Z$ is sub-exponential;
it can be shown that $\Norm{\cdot}_{L^{\psi_1}}$ is indeed a norm.

The constant relating $\Norm{\cdot}_{L^{\psi_1}}$ and $\Norm{\cdot}_{L^2}$
is allowed to depend on $n_t$ in the following way.

\paragraph{(Ni)}
Let $(X,Y) \sim P$ and $\bar{X} = X - P X$.
For any $m \in \mathbb{N}$, let
  \begin{equation}  \label{def_kappa}
 \kappa(m) = \sup_{ \theta \neq 0, \Norm{\theta}_0 \leq 2m} \frac{\Norm{\langle \bar{X},\theta \rangle}_{L^{\psi_1}}}{\Norm{\langle \bar{X},\theta \rangle}_{L^2}} \vee \frac{1}{\log 2}.
\end{equation}
%  Let $b > 1$. Then for any $K \in [|0;n_t|]$ such that 
%   \[ K \leq \frac{\mu \eta^2 n_v}{128R^2 b \log n_t} \wedge K_0, \]
There exists a constant $\nu_0$ such that
\begin{equation} \label{Hw_huber}
 %\frac{R(K)}{\sqrt{\mu(K)}} 
 \kappa(K) \log \kappa(K)  \leq \nu_0 \sqrt{\frac{n_v}{\log(n_t \vee K)}}. 
\end{equation}

\paragraph{}
The interpretation of this hypothesis is not obvious. Note first that $\kappa(K)$ is a non-decreasing function of $K$,
and in particular,
\[ \kappa(K) \leq \kappa(d) = \sup_{\theta \neq 0} \frac{\Norm{\langle \bar{X}, \theta \rangle}_{L^{\psi_1}}}{\Norm{\langle \bar{X}, \theta \rangle}_{L^2}}.  \]
Unlike $\kappa(K)$, $\kappa(d)$ is invariant under linear transformations of $X$: in other words, it only depends on the linear space $V$ spanned by the columns of $X$. In particular, $\kappa(d)$ does not depend on the covariance matrix of $X$, provided that it is non-degenerate. The inequality $\Norm{\langle \bar{X}, \theta \rangle}_{L^{\psi_1}} \leq \kappa(d)\Norm{\langle \bar{X}, \theta \rangle}_{L^2} $ can be interpreted as an effective, scale invariant version of sub-exponentiality: it states that the tail of $\langle \bar{X}, \theta \rangle$ is sub-exponential with a scale parameter which isn't too large compared to its standard deviation. In sections \ref{sec.gauss} , \ref{sec.nonparam} and \ref{sec.trig}, we shall give examples where simple bounds can be proved for $\kappa(K)$ or $\kappa(d)$.

\subsection{Main Theorem}
When Agghoo is used on a collection $(\learnrule_k)_{1 \leq k \leq K}$ of linear regression estimators satisfying Hypothesis \eqref{hyp_sparse_reg}, such as the Lasso parametrized by the number of non-zero coefficients, as in Definition \ref{def_param_lasso}, the following theorem applies.

\begin{Theorem} \label{agcv_hub}
 Let $X \in \mathbb{R}^d$ and $Y \in \mathbb{R}$ be random variables with joint distribution $P$ such that hypothesis \textbf{(Lcs)} holds. Let $D_n = (X_i,Y_i)_{1 \leq i \leq n} \sim P^{\otimes n}$ be a dataset of size $n$. Let $n_v = n - n_t$, where $n_t$ is given by assumption \textbf{(Reg-}$\cT$ \textbf{)}. Let $c$ denote the Huber loss parameter from Definition \ref{def_hub}. 
 
%  For any $K \in \{ 1,\ldots, n_t \}$, let
%   \begin{equation}  \label{def_R}
%  R(K) = \sup_{ x \in \text{supp}(\bar{X})} \max_{I: |I| = 2K} \left( \sum_{i \in I} x_i^2 \right)^{\frac{1}{2}}
% \end{equation}
% and 
%  \begin{equation} \label{def_mu}
%  \mu(K) = \inf_{\theta : \Norm{\theta}_0 \leq 2K } \frac{\theta ^T \mathbb{E}[ \bar{X}\bar{X}^T] \theta}{\Norm{\theta}^2}.
% \end{equation}

 Let $K$ be an integer such that $3 \leq K \leq e^{\sqrt{n_v}}$ and $(\learnrule_k)_{1 \leq k \leq K}$ be a collection of linear regression estimators which satisfies hypothesis \eqref{hyp_sparse_reg}. Assume that hypotheses \textbf{(Ni)} and \textbf{(Uub)} hold. 
%  the collection 
%  \[  \left( \learnrule \bigl(\lambda_{\hat{m}^{last}_k}\bigr) \right)_{1 \leq k \leq K} \]

There exist numerical constants $\mu_1 > 0,\mu_2 \geq 1$ such that,
for any $\theta \in \mathbb{R}$ such that $\sqrt{\alpha + 3} \frac{\mu_2 \nu_0}{\eta} \leq \theta < 1$,
%  \begin{align*}
%   \left(1-\sqrt{\frac{a}{b}} \right) \mathbb{E}\Bigl[\lossb{\ERMag{\cT}} \Bigr] &\leq  \left(1+\sqrt{\frac{a}{b}} \right)\mathbb{E}\Bigl[\min_{1 \leq k \leq K} \lossb{\mathcal{A}(\lambda_{\hat{m}^{last}_k}, D_{n_t})} \Bigr] \\ 
%   &+ 24 \sqrt{ab} \frac{c^2 \log n_t}{\eta n_v} \left[ 1 + \frac{K}{n_t^a} +  16 R(1) \frac{K \mathbb{E}[|Y|]}{n_t^{a-2}}\right].
%  \end{align*}
\begin{equation} \label{or_ineq_hub}
 (1-\theta) \mathbb{E}\Bigl[\loss{\ERMag{\cT}} \Bigr] \leq  (1+\theta)\mathbb{E}\Bigl[\min_{1 \leq k \leq K} \loss{\learnrule_k(D_{n_t})} \Bigr] 
+ 54 (\alpha + 3) \frac{c^2 \log (K \vee n_t)}{\theta \eta n_v} + \frac{7 \mu_1 L c \log K}{\theta n_t \sqrt{n_v}}.
\end{equation}
\end{Theorem}

Theorem \ref{agcv_hub} is proved in appendix \ref{app.sec_proof_thm}.
Theorem \ref{agcv_hub} compares the excess risk of Agghoo to that of the best linear predictor in the collection $\learnrule_k(D_{n_t})$,
trained on a subset of the data of size $n_t$.
Taking $|\cT| = 1$ in Theorem \ref{agcv_hub} yields an oracle inequality for the hold-out, which is also cross-validation with one split.
It is, to the best of our knowledge, the first theoretical guarantee on hyperparameter aggregation (or selection) for the huberized Lasso.  That $n_t$ appears in the oracle instead of $n$ is a limitation, but it is logical, since estimators aggregated by Agghoo are only trained on samples of size $n_t$. Typically, the  excess risk increases at most by a constant factor when a dataset of size $n$ is replaced by a subset of size $\tau n$, and this constant tends to $1$ as $\tau \rightarrow 1$.
This allows to take $n_v$ of order $n$ ($n_v = (1-\tau)n$), while losing only
a constant factor in the oracle term. 

In addition to the oracle, $\mathbb{E}\Bigl[\min_{1 \leq k \leq K} \loss{\learnrule_k(D_{n_t})} \Bigr]$, the right hand side of equation \eqref{or_ineq_hub} contains two remainder terms. Since $K \leq n_t$, the second of these terms is always negligible with respect to the first as $n_v,n_t \to + \infty$ for fixed $L,c$.  Assuming that $n_v, n_t$ are both of order $n$, the first remainder term is $\mathcal{O}(\frac{\log n}{n})$ with respect to $n$. In comparison, the minimax risk for prediction in the model $Y = \langle \theta_*, X \rangle + \varepsilon, \Norm{\theta_*}_0 \leq k_*, \varepsilon \sim \mathcal{N}(0,1)$ is greater than a constant times $ \frac{k_*}{n}$ by proposition \ref{prop_minimax_huber_reg}. 
Thus, if more than $\log n$ independent components of $X$ are required for prediction of $Y$, the remainder term can be expected to be negligible compared to the oracle as a function of $n$.

As a function of a scale parameter $\sigma$ in a model $Y = s(X) + \sigma \varepsilon$, where $\varepsilon$ is distributed symmetrically around $0$, the remainder term scales as $\frac{c^2}{\eta}$, where $\eta$ depends only on $\sigma$ and on the fixed distribution of $\varepsilon$. When $\frac{\sigma}{c}$ is lower bounded and if $\varepsilon$ is sufficiently regular, then $\frac{c^2}{\eta} = \mathcal{O}(c\sigma)$ (see the discussion of hypothesis \textbf{(Lcs)}). In that case, the rate $c\sigma$ is the same as in the minimax lower bounds of Proposition \ref{agcv_hub}, and can therefore be considered correct. When $\frac{\sigma}{c} \to 0$, $\frac{c^2}{\eta} \sim c^2$ is suboptimal for Gaussian distributions $\sigma \varepsilon$, where the correct scaling is $\sigma^2$ (by Proposition \ref{prop_minimax_huber_reg} and a simple comparison with least squares). However, Theorem \ref{agcv_hub} makes \emph{no} moment assumptions whatsoever on the residual $Y - s(X)$ - thus, it is logical that the parameter $c$, which controls the robustness of the Huber loss, should appear in the bound.  

In equation \eqref{or_ineq_hub}, there is a tradeoff between the oracle and the remainder terms, governed by the tuning parameter $\theta \in (0;1]$. $\theta$ must be larger than a positive constant depending on $\alpha, \nu_0$ and $\eta$; as a result, Theorem \ref{agcv_hub} only yields a nontrivial result when $\nu_0 < \frac{\eta}{\mu_2 \sqrt{\alpha + 3}}$. Note that hypothesis \textbf{(Ni)}, which defines $\nu_0$, allows $\nu_0$ to decrease with $n$ as fast as $\sqrt{\frac{\log n}{n}}$, in case $\kappa(K)$ is a constant - as when $X$ is gaussian (see section \ref{sec.gauss} below). Assuming only that $\nu_0 = \nu_0(n) \to 0$ and that the remainder term is negligible compared to the oracle, equation \eqref{or_ineq_hub} proves that $\mathbb{E}\Bigl[\loss{\ERMag{\cT}} \Bigr] \sim \mathbb{E}\Bigl[\min_{1 \leq k \leq K} \loss{\learnrule_k(D_{n_t})} \Bigr] $ by taking $\theta = \theta_n \to 0$ slowly enough - an "optimal" oracle inequality.

\subsection{Gaussian design} \label{sec.gauss}
In the case where $X \in \mathbb{R}^d$ is a Gaussian vector, 
$\langle \theta, X - E X \rangle$ follows a centered normal distribution.
As a result, $\kappa(K)$ - defined in equation \eqref{def_kappa} -  is a fixed numerical constant, equal to $\max(\Norm{Z}_{L^{\psi_1}}, \frac{1}{\log 2})$, where $Z \sim \mathcal{N}(0,1)$. It follows that for any fixed $\nu_0$, hypothesis \textbf{(Ni)} holds as soon as $\frac{n_v}{\log(n_t \vee K)}$ is large enough.

Moreover, for Gaussian design, it is possible to show that the Lasso estimators of Definition \ref{def_param_lasso} satisfy hypothesis \textbf{(Uub)} \emph{for any} $R \geq 0$ (including $R = +\infty$), as long as $Y$ has some moments and $K$ isn't too large. More precisely, hypothesis \textbf{(Uub)} holds with $L,\alpha$ independent from $R$. This leads to the following corollary.

\begin{corollary} \label{cor_gauss}
Assume that $X \in \mathbb{R}^{d}$ is a Gaussian vector, that for some $u \in (0;1]$, $Y \in L^{1+u}$ and that hypothesis \textbf{(Lcs)} holds. 
Let $R \in \mathbb{R} \cup \{ + \infty \}$ and let $\ERMag{\cT}$ be the Agghoo estimator built from the collection $\left(\learnrule_{k,R}^{lasso} \right)_{1 \leq k \leq K}$.
Assume that $n_t \geq 13 + \frac{6}{u}$ and

\begin{equation} \label{eq_bound_K}
3 \leq K \leq \min \left( \frac{n_t}{\log n_t}, \frac{n_t}{\log d}, \frac{2(n_t-1)}{5} \right).
\end{equation} 

There exist numerical constants $\mu_5, \mu_8$ such that for all $\theta \in \left[ \frac{\mu_5}{\eta} \sqrt{\frac{\log n_t}{n_v}} ; 1 \right]$ and all $q \in \mathbb{R}$,
\begin{align*}
(1 - \theta) \mathbb{E} \left[ \loss{\ERMag{\cT}} \right] &\leq 
 (1 + \theta) \mathbb{E} \left[ \min_{1 \leq k \leq K} \loss{\learnrule_{k,R}^{lasso}(D_{n_t})} \right] + 243 \frac{c^2 \log n_t}{\theta \eta n_v} \\
 &\quad + \left( c \vee \Norm{Y_1 - q}_{L^{1+u}} \right) \frac{\mu_8 c}{\theta n_t \sqrt{n_v}}.
\end{align*}
\end{corollary}

Corollary \ref{cor_gauss} allows to take $\theta \to 0$
at any rate slower than $\sqrt{\frac{\log n_t}{n_v}}$, 
so that the asymptotic constant in front of the oracle 
is $1$. 
%For the Lasso, it is always true that $\Norm{\hat{\theta}_{\lambda,R}}_0 \leq n_t$, so that one can always assume that $K \leq n_t$.
The constraint \eqref{eq_bound_K} imposed on $K$ by Corollary \ref{cor_gauss} is mild, since there are strong practical and theoretical reasons to take $k$ much smaller than $\frac{n_t}{\log n_t}$ anyway: this enforces sparsity -- minimizing computational complexity and improving interpretability -- and allows better control of the minimax risk (Proposition \ref{prop_minimax_huber_reg}). Equation \eqref{eq_bound_K} serves only to prove that $\hat{\theta}_{k,R}^{lasso}$ satisfies hypothesis \textbf{(Uub)}, hence it could be replaced by a polynomial bound on $R$ and on $ X - E X$, as explained in the discussion of hypothesis \textbf{(Uub)}. 

\subsection{Nonparametric bases} \label{sec.nonparam}
Given real random variables $U \in [a,b], Y \in \mathbb{R}$, a linear model may be a poor approximation to the actual regression function $s_0(U)$. A popular technique to obtain a more flexible model is to replace the one-dimensional variable $U$ with a vector $X = \psi_j(U)_{1 \leq j \leq d_n}$, where $(\psi_j)_{1 \leq j \leq d_n}$ spans a space of functions $W_{d_n}$ known for its good approximation properties, such as trigonometric polynomials, wavelets or splines (\cite[Chapter 5]{Hastie2009}). $d_n$ is practically always 
allowed to tend to $+ \infty$ as $n$ grows to make sure that the approximation error of $s$ by functions in $W_{d_n} = \langle (\psi_j)_{1 \leq j \leq d_n} \rangle $ converges to $0$.  In this section, we discuss conditions under which Theorem \ref{agcv_hub} applies to such models.

It turns out that most of the classical function spaces satisfy an equation of the form
\[ \forall f \in W_{d_n}, \NormInfinity{f} \leq \mu(a,b) \sqrt{d_n} \Norm{f}_{L^2([a,b])},  \]
where $\mu(a,b)$ is some constant independent of $d_n$ \cite[Section 3.1]{birge1998}. By replacing  $\psi_j(x)$, defined on $[a;b]$, by $\psi_j(\tfrac{x - a}{b - a})$ defined on $[0;1]$, we can see that the correct scaling with respect to $a,b$ is $\mu(a,b) = \frac{\mu(0,1)}{\sqrt{b - a}} $. 
Thus, if the distribution of $U$ dominates the uniform measure on $[a,b]$, in the sense that for some $p_0 > 0$ and any measurable $A \subset [a,b]$, $P(U \in A) \geq \frac{p_0}{b-a} \int_A dx$, then
\[ \forall f \in W_{d_n}, \Norm{f(U)}_{L^\infty} \leq \frac{\mu(0,1)}{\sqrt{p_0}} \sqrt{d_n} \Norm{f(U)}_{L^2}.  \]
In particular, if $W_{d_n}$ contains the constant functions - which is the case with splines, wavelets and trigonometric polynomials - then equation \eqref{def_kappa} holds with $\kappa(d_n)$ of
order $\sqrt{d_n}$. Thus, equation \eqref{hyp_bound_K} of hypothesis \textbf{(Ni)} holds under the assumption that $d_n \leq \mu \nu_0 \frac{n_v}{\log n_t}$ for some constant $\mu$. Assuming that $n_v$ and $n_t$ are both of order $n$ (for example, a $V-$fold split with fixed $V$), this assumption is mild: as a consequence of \cite[Theorem 11.3]{Gyrfi2002} and approximation-theoretic properties of the spaces $W_{d_n}$ \cite{devore1993}, taking $d_n \leq \frac{n}{\log^2 n}$, for example, is sufficient to attain minimax convergence rates \cite{stone1982} \cite[Theorem 3.2]{Gyrfi2002} over standard classes of smooth functions. 

Note that even though $\kappa(d_n) \approx \sqrt{d_n}$, this does not in general imply that $\kappa(K) = \mathcal{O}(\sqrt{K})$: for example, in the case of regular histograms on $[0,1]$, $\psi_j = \sqrt{d_n} \mathbb{I}_{\left[ \frac{j}{d_n}, \frac{j+1}{d_n} \right]}$ so $\frac{\NormInfinity{\psi_j}}{\Norm{\psi_j}_{L^2}} = \sqrt{d_n}$ and when $U \sim \text{Unif}([0;1])$, $\kappa(1) \sim_{d_n \to + \infty} \sqrt{d_n}$. The property $\kappa(K) = \mathcal{O}(\sqrt{K})$ does, however, hold in the case of the Fourier basis: as a result, $d_n$ may be arbitrarily large, and only bounds on $K$ (the maximal zero-norm of the estimators) are required.
We examine this case in detail in the following section. 

%In this setting, norm inequalities such as \eqref{def_kappa} generally do not hold with a constant $\kappa(K)$, but rather with a polynomial function of $K,n_t$.  This makes \eqref{Hw_huber} interesting compared to the stronger assumptions of .. .
\subsection{The Fourier basis} \label{sec.trig}
Suppose that real variables $(U,Y)$ are given, and that we wish to find
the best predictor of $Y$ among $1-$periodic functions of $U$.
Let $s_{per}$ denote the minimizer of the risk $E[\phi_c(Y - t(U))]$ among all measurable $1-$periodic functions on $\mathbb{R}$.
For all $k \in \mathbb{N}$, let $\psi_{2k}(x) = \sqrt{2} \cos(2\pi k x)$
and $\psi_{2k-1}(x) = \sqrt{2} \sin(2 \pi k x)$. Let $X = (\psi_j(U))_{1 \leq j \leq d}$, where $d \in \mathbb{N}$ and $d \geq 2$. 
One can easily show that $s_{per}(U) = s(X)$, where $s$ minimizes $P[\phi_c(Y - t(X))]$ among measurable functions $t$ on $\mathbb{R}^{d}$.
By taking $d$ large and using sparse methods, it is possible to approximate functions $s_{per}$ which have only a small number of  non-zero Fourier coefficients, but potentially at high frequencies,
as is commonly the case in practice \cite{sparse_fourier}. 

Let $(\hat{q}_k, \hat{\theta}_k)_{1 \leq k \leq K}$ be a collection of sparse linear regression estimators satisfying hypothesis \ref{hyp_sparse_reg} and let $\hat{t}_k$ denote the predictor $\hat{t}_k: x \mapsto \hat{q}_k(D_{n_t}) + \langle \hat{\theta}_k(D_{n_t}), x  \rangle$. 
% Attention remarque ci-dessous douteuse
%such as the hard-thresholding procedure commonly used in this setting. 
Given this initial collection of linear predictors, Definition \ref{def_modif_estim_trigo} below constructs a second collection $(\tilde{q}_k, \tilde{\theta}_k)_{1 \leq k \leq K}$ which also satisfies hypothesis \textbf{(Uub)} under an appropriate distributional assumption (Corollary \ref{cor_trigo}, equation \eqref{eq_lb_density_X}).

\begin{Definition} \label{def_modif_estim_trigo}
Let $(\tilde{q}_k, \tilde{\theta}_k)_{1 \leq k \leq K}$ be defined by
\begin{equation} \label{eq_trunc_estim}
(\tilde{q}_k, \tilde{\theta}_k) =
\begin{cases}
& (\hat{q}_k, \hat{\theta}_k) \text{ if } \Norm{\hat{\theta}_k}_{\ell^2} \leq n_t^{\frac{3}{2}} \\
& (\tilde{q},0) \text{ otherwise},
\end{cases} 
\end{equation}
where 
\begin{align*}
  \tilde{q}(D_{n_t}) &\in \argmin_{q \in \hat{Q}(D_{n_t})} |q| \\
  \hat{Q}(D_{n_t}) &= \argmin_{q \in \mathbb{R}} \sum_{i = 1}^{n_t} \phi_c(Y_i - q).
\end{align*}
For any $k$, 
%let $\tilde{\learnrule}_k: D_n \mapsto \left( 
%x \mapsto \tilde{q}_k(D_n) + \langle \tilde{\theta}_k(D_n), x \rangle \right)$
let $\tilde{t}_k: x \mapsto \tilde{q}_k(D_{n_t}) + \langle \tilde{\theta}_k(D_{n_t}), x \rangle$. 
\end{Definition}
By construction, $(\tilde{q}_k, \tilde{\theta}_k)$ also satisfies hypothesis \ref{hyp_sparse_reg}.
Replacing $(\hat{q}_k, \hat{\theta}_k)$ by $(\tilde{q}_k, \tilde{\theta}_k)$ may improve performance and cannot significantly degrade it, as proposition \ref{prop_trunc_improves} below makes clear.

\begin{proposition} \label{prop_trunc_improves}
Assume that $Y \in L^\alpha$ for some $\alpha \in (0,1]$ and 
let $q_* \in \mathbb{R}$. If
\begin{equation} \label{hyp_lb_nt_cor_trigo}
n_t \geq \max \left(\frac{16}{\alpha}, \frac{4}{\eta^2}, c  + 10 \Norm{s(X) - q_*}_{L^1} \right),
\end{equation}
for some numerical constant $\mu_{10} \geq 0$,
\begin{equation} \label{eq_ub_tilde_cor_trigo}
 \mathbb{E} \left[ \min_{1 \leq k \leq K} \loss{\tilde{t}_k} \right] \leq \mathbb{E} \left[ \min_{1 \leq k \leq K} \loss{\hat{t}_k} \right] + \frac{\mu_{10} c}{n_t^3} \left(c \vee 2^{\frac{2}{\alpha}} \Norm{Y - q_*}_{L^\alpha} \right)^4.
\end{equation}
\end{proposition}

Theorem \ref{agcv_hub} can be applied to the collection $(\tilde{q}_k, \tilde{\theta}_k)_{1 \leq k \leq K}$,  which yields the following Corollary.

\begin{corollary} \label{cor_trigo}
Assume that $U$ has a density $p_U$ such that
\begin{equation} \label{eq_lb_density_X}
\inf_{t \in [0;1)} \sum_{j \in \mathbb{Z}} p_U(t + j) \geq p_0 > 0.
\end{equation}
Assume that there exists $\eta > 0$ such that almost surely,
\[ \mathbb{P} \left( |Y - s_{per}(U)| \leq \frac{c}{2} \right) \geq \eta. \]
There exists a constant $\mu_9 \geq \sqrt{8}$ such that, if
\begin{equation} \label{hyp_ub_K_cor_trigo}
K \leq p_0 \left(\frac{\theta \eta}{\mu_9} \right)^2 \frac{n_v}{\log^3 n_t}
\end{equation}
for some $\theta \in (0;1]$, then
\begin{equation} \label{or_ineq_cor_trigo}
(1-\theta) \mathbb{E}\Bigl[\loss{\ERMag{\cT}} \Bigr] \leq  (1+\theta)\mathbb{E}\Bigl[\min_{1 \leq k \leq K} \loss{\tilde{t}_k} \Bigr] 
+ 270 \frac{c^2 \log n_t}{\theta \eta n_v} 
+ \frac{5 \mu_1 c K \log K}{\theta n_t^{2} \sqrt{n_v}}.
\end{equation}
\end{corollary}

% Given any collection of estimators $(\hat{q}_k, \hat{\theta}_k)_{1 \leq k \leq K}$ such that $\Norm{\hat{\theta}_k}_0 \leq k$ (point 1 of hypothesis \ref{hyp_sparse_reg}) and such that $\hat{q}_k$ is obtained through empirical risk minimization (point 2 of hypothesis \ref{hyp_sparse_reg}), equation \eqref{eq_trunc_estim} yields a collection of estimators $(\tilde{q}_k, \tilde{\theta}_k)_{1 \leq k \leq K}$ which also satisfies point 2. of hypothesis \eqref{hyp_sparse_reg}, as we show in 
% the proof. Furthermore, this truncation step cannot significantly degrade performance, as equation \eqref{eq_ub_tilde_cor_trigo} makes clear.

If the $1-$periodicity of $s_{per}$ represents (say) a yearly cycle, then Equation \eqref{eq_lb_density_X} states that each "time of year" $u \in [0;1]$ is sampled with a positive density, i.e that the density of $U - \lfloor U \rfloor$ is lower bounded by a positive constant $p_0$ on $[0;1]$. This ensures that equation \eqref{def_kappa}
holds with $\kappa(K)$ of order $\sqrt{\frac{K}{p_0}}$, so that hypothesis \eqref{Hw_huber} reduces to $K \leq p_0 \left(\frac{\theta \eta}{\mu_9} \right)^2 \frac{n_v}{\log n_t}$.    In particular, if $\theta$ is constant and $n_v$ is of order $n$, then $K$ is allowed to grow with $n$ at rate $\frac{n}{\log n}$. This is a reasonable restriction, as by Proposition \ref{prop_minimax_huber_reg}, one cannot expect to estimate more than $\frac{n}{\log n}$ coefficients with reasonable accuracy (a $\frac{1}{\log n}$ convergence rate being too slow for most practical purposes).

Corollary then deduces an oracle inequality with leading constant $\frac{1 + \theta}{1 - \theta}$ (arbitrarily close to $1$) and remainder term of order $\frac{c^2 \log n}{\eta n}$,
which is typically negligible in the non-parametric setting of this corollary. For this reason, corollary \ref{cor_trigo} can be said to be optimal, at least up to constants.

\subsection{Effect of V}

The upper bound given by Theorem \ref{agcv_hub} only depends on $\mathcal{\cT}$ through $n_v$ and $n_t$. The purpose of this section is to show that for a given value of $n_v$, increasing $V = |\cT|$ always decreases the risk. This is proved in the case of monte carlo subset generation defined below.

\begin{Definition} \label{def_mcbag}
 For $\tau \in \left[ \frac{1}{n};1 \right]$ and $V \in \mathbb{N}^*$, let $\mathcal{T}^{mc}_{\tau,V}$ be generated independently of the data $D_n$ by drawing $V$ elements independently and uniformly in the set \[ \left\{ T \subset [|1;n|] : |T| = \lfloor \tau n \rfloor \right\}. \] 
\end{Definition}

For fixed $\tau$, the excess risk of Agghoo is a non-increasing function of $V$.

\begin{proposition} \label{prop_ag_improves}
 Let $U \leq V$ be two non-zero integers. Let $\tau \in \left[ \frac{1}{n};1 \right]$. Then:
 \[ \mathbb{E} \left[ \loss{\ERMag{\cT^{mc}_{\tau,V}}} \right] \leq \mathbb{E} \left[ \loss{\ERMag{\cT^{mc}_{\tau,U}}} \right]. \]
\end{proposition}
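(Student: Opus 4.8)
The plan is to exploit the fact that $\cT^{mc}_{\tau,V}$ consists of $V$ i.i.d.\ draws from the same distribution over subsets of size $\lfloor \tau n \rfloor$, so that the Agghoo predictor is, conditionally on $D_n$, an average of $V$ i.i.d.\ predictors. Concretely, write $\hat{g}_T := \bigl(\hat{q}_{\hat k_T(D_n)}(D_n^T),\hat\theta_{\hat k_T(D_n)}(D_n^T)\bigr)$ for the (intercept, coefficient) pair produced by the hold-out on split $T$, viewed as an element of $\mathbb{R}^{d+1}$, and let $g\mapsto \risk(g)$ denote the risk of the associated linear predictor. The key structural observation is that the map $g\mapsto \risk(g)$ is \emph{convex} on $\mathbb{R}^{d+1}$: it is $\mathbb{E}$ of $\phi_c$ (a convex function) composed with an affine map of $g$. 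Hence for any finite family $g_1,\dots,g_V$, Jensen's inequality gives $\risk\bigl(\tfrac1V\sum_T g_T\bigr)\le \tfrac1V\sum_T \risk(g_T)$, and the same holds in expectation. Subtracting $\risk(\bayes)$ preserves the inequality, so the excess risk $\loss{\cdot}$ is also convex in $g$.

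Next I would condition on $D_n$ and on the (data-independent) randomness used to generate the splits. Given $D_n$, let $\mu$ be the conditional distribution of a single predictor $\hat g_T$ when $T$ is drawn uniformly among subsets of size $\lfloor\tau n\rfloor$; then $\hat\theta^{ag}_{\cT^{mc}_{\tau,V}}$ and $\hat q^{ag}_{\cT^{mc}_{\tau,V}}$ are averages of $V$ i.i.d.\ samples from $\mu$. The statement $\mathbb{E}\bigl[\loss{\ERMag{\cT^{mc}_{\tau,V}}}\bigr]\le \mathbb{E}\bigl[\loss{\ERMag{\cT^{mc}_{\tau,U}}}\bigr]$ for $U\le V$ is then an instance of a general monotonicity fact: if $\Phi$ is convex and $Z_1,Z_2,\dots$ are i.i.d., then $\mathbb{E}\bigl[\Phi(\tfrac1V\sum_{i=1}^V Z_i)\bigr]$ is non-increasing in $V$. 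I would prove this by the standard coupling argument: write $\bar Z_V := \tfrac1V\sum_{i=1}^V Z_i$ and note that $\bar Z_U$ has the same distribution as $\tfrac1{\binom{V}{U}}\sum_{A} \bar Z_A$ where the sum is over size-$U$ subsets $A\subset\{1,\dots,V\}$ and $\bar Z_A$ is the average over $A$ — more simply, $\bar Z_U \stackrel{d}{=} \mathbb{E}[\bar Z_V \mid \text{a random } U\text{-subset}]$, so by Jensen conditionally, $\mathbb{E}[\Phi(\bar Z_V)]\le \mathbb{E}[\Phi(\bar Z_U)]$ after taking the outer expectation. Applying this with $\Phi = \loss{\cdot}$ (convex, by the previous paragraph) and $Z_i = \hat g_{T_i}$, conditionally on $D_n$, and then integrating over $D_n$, yields the claimed inequality.

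The main thing to be careful about — and what I expect to be the only real obstacle — is the exchangeability/coupling step: one must verify that the $\hat g_{T_i}$ for $i=1,\dots,V$ are genuinely conditionally i.i.d.\ given $D_n$ (which follows from Definition~\ref{def_mcbag}, since the $V$ subsets are drawn independently and uniformly, independently of the data), and that the ``$U$-subset of a $V$-sample is again an i.i.d.\ $U$-sample'' identity is applied correctly, so that $\hat\theta^{ag}_{\cT^{mc}_{\tau,U}}$ really has the same law as the conditional expectation of $\hat\theta^{ag}_{\cT^{mc}_{\tau,V}}$ given a random $U$-subset of the $V$ draws. Once this is set up cleanly, the rest is just Jensen applied twice (once conditionally to pull $\loss{\cdot}$ inside the conditional expectation over the discarded draws, once via the tower property) together with convexity of $g\mapsto\loss{g}$. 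No boundedness or norm-inequality hypotheses are needed here; this is a purely convexity-based argument and in particular does not use \eqref{Hw_huber} or \eqref{hyp_noise}.
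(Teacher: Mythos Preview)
Your approach is correct and matches the paper's proof exactly: write the $V$-average as a uniform convex combination of all $U$-subset averages, apply convexity of $g\mapsto\loss{g}$, and use that each size-$U$ subset of the $V$ i.i.d.\ draws has the law of $\cT^{mc}_{\tau,U}$. One correction to your write-up: the identity you need is the deterministic one $\bar Z_V = \tfrac{1}{\binom{V}{U}}\sum_{A}\bar Z_A$ (not a distributional statement about $\bar Z_U$), after which Jensen gives $\Phi(\bar Z_V)\le \tfrac{1}{\binom{V}{U}}\sum_A \Phi(\bar Z_A)$ and taking expectations together with $\bar Z_A\stackrel{d}{=}\bar Z_U$ finishes the argument.
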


\begin{proof}
Let $ (T_i)_{i = 1,\ldots,V} = \cT^{mc}_{\tau,V}$. 
 Let $\mathcal{I} = \left\{ I \subset [|1;V|] : |I| = U \right\}$. Then 
 \begin{align*}
  \ERMag{\cT^{mc}_{\tau,V}} &= \sum_{i = 1}^V \frac{1}{V} \ERMho{T_i} \\
  &= \sum_{i = 1}^V \frac{{V-1 \choose U-1}}{U  {V \choose U}} \ERMho{T_i} \\
  &= \frac{1}{U} \sum_{i = 1}^V \frac{\sum_{I \in \mathcal{I}} \mathbb{I}_{i \in I}}{|\mathcal{I}|} \ERMho{T_i} \\
  &= \frac{1}{|\mathcal{I}|} \sum_{I \in \mathcal{I}} \frac{1}{U} \sum_{i \in I} \ERMho{T_i}.
 \end{align*}
It follows by convexity of $f \mapsto \loss{f}$ that
\[ \mathbb{E} \left[ \loss{\ERMag{\cT^{mc}_{\tau,V}}} \right] \leq \frac{1}{|\mathcal{I}|} \sum_{I \in \mathcal{I}} \mathbb{E} \left[ \loss{\frac{1}{U} \sum_{i \in I} \ERMho{T_i} } \right]. \]
For any $I \in \mathcal{I}$, $(T_i)_{i \in I} \sim \mathcal{T}^{mc}_{\tau,U}$ and is independent of $D_n$, therefore $ \frac{1}{U} \sum_{i \in I} \ERMho{T_i} \sim \ERMag{\mathcal{T}^{mc}_{\tau,U}}$. This yields the result.
\end{proof}

It can be seen from the proof that the proposition also holds for Agcv. Thus, increasing $V$ can only improve the performance of these methods. The same argument does not apply to CV, because CV takes an argmin after averaging, and the argmin operation is neither linear nor convex. Indeed, no comparable theoretical guarantee has been proven for CV, to the best of our knowledge, even though increasing the number of CV splits (for given $\tau$) generally improves performance in practice. 

Proposition \ref{prop_ag_improves} does not quantify the gain due to aggregation. This gain depends on the properties of the convex functional $t \mapsto \loss{t}$, in particular on its modulus of strong convexity in a neighbourhood of the target $s$ (assuming that at least some estimators in the collection are close to $s$). Moreover, as for any loss function, the gain due to aggregation depends on the diversity of the collection $(\ERMho{T_i})_{1 \leq i \leq V}$: the more the hold-out estimators $\ERMho{T}$ vary with respect to $T$, the greater the effect of aggregation.

More precisely, under hypothesis \textbf{(Lcs)}, we can prove the following improvement to Proposition \ref{prop_ag_improves}.

\begin{proposition} \label{prop_ag_quant_improve}
Let $(X,Y) \sim P$ be independent from $D_n$. Assume that $P$ satisfies hypothesis \textbf{(Lcs)}. For any $i \in \{1,\ldots,V\}$, let $E_i(c)$ denote the event $|(\ERMho{T_i} - s)(X)| \leq \frac{c}{2}|$.
Then for any $V \in \mathbb{N}$,
\begin{equation} \label{eq_comp_agghoo_ho}
    \mathbb{E} \left[ \loss{\ERMag{\cT^{mc}_{\tau,V}}} \right] \leq 
    \mathbb{E} \left[ \loss{\ERMho{T_1}} \right] - \eta \frac{V-1}{4V} \mathbb{E} \left[ \left(\ERMho{T_1} - \ERMho{T_2} \right)^2(X) \mathbb{I}_{E_1(c)} 
    \mathbb{I}_{E_2(c)} \right].
\end{equation}
When $\loss{\ERMho{T_1}}$ is small enough, the event $E_1(c)$ occurs with high probability. As a consequence, 
if $\mathbb{E} \left[ \loss{\ERMho{T_1}} \right] \leq \frac{\eta c^2}{64}$, then
\begin{equation} \label{eq_comp_agghoo_ho_med}
    \mathbb{E} \left[ \loss{\ERMag{\cT^{mc}_{\tau,V}}} \right] \leq \mathbb{E} \left[ \loss{\ERMho{T_1}} \right] - \eta \frac{V-1}{16 V} \text{Med} \left[ \left(\ERMho{T_1} - \ERMho{T_2} \right)^2(X) \right],
\end{equation}
where $\text{Med}[Y]$ denotes the largest median of a random variable $Y$.
\end{proposition}
%\FloatBarrier
Proposition \ref{prop_ag_quant_improve} is proved in appendix \ref{proof_prop_ag_quant_improve}. It quantifies the gain due to aggregation in terms of the parameter $c$ of the Huber loss, the constant $\eta$ given by hypothesis \textbf{(Lcs)} and the distance between two hold-out estimators that are close enough to $s$. Taking $c \to + \infty$ recovers the least-squares case, where $\eta = 1$ and there are no constraints on $\ERMho{T_i} - s$. 
Only two indices $1,2$ appear in the right-hand side of equation \eqref{eq_comp_agghoo_ho}: that is a consequence of the exchangeability of the collection $(\ERMho{T_i})_{1 \leq i \leq V}$ for Monte-Carlo subset generation. The same result also applies to $V-$fold Agghoo, since it also yields an exchangeable collection. For arbitrary $\cT$, all distinct pairs of indices would have to be considered. 

Going beyond proposition \ref{prop_ag_quant_improve} requires giving nontrivial lower bounds on $\left(\ERMho{T_1} - \ERMho{T_2} \right)^2(X)$, which is no easy task, given the complex dependencies involved. 
%\todo{Cite thesis}
Results in this direction have only recently been obtained in the setting of least-squares density estimation \cite[Chapters 5-6]{these}.

A few general heuristics apply: first, if there is one learning rule $\learnrule_{k_*}$ in the collection which is much better than the others, the hold-out can be expected to select it most of the time: in that case, Agghoo reduces to bagging, and potential gains depend on the stability of $\learnrule_{k_*}$. In contrast, if there are many rules $\learnrule_k$ which are close to optimal, while being distant from each other, then the gains of aggregation can be expected to be large, even if the individual rules $\learnrule_k$ are stable.

\section{Simulation study}
This section focuses on hyperparameter selection for the Lasso with huber loss, either using a fixed grid or using the reparametrization from Definition \ref{def_param_lasso}. The methods considered for this task are Aggregated hold-out given by Definition \ref{def_agghoo}, Aggregated cross-validation given by Definition \ref{def_acv} and standard cross-validation given by Definition \ref{def_cv}. In all cases, the subsamples are generated independently from the data and uniformly among subsets of a given size $\tau n$, as in Definition \ref{def_mcbag}. Thus, all three methods share the same two hyperparameters: $\tau$, the fraction of data used for training the Lasso, and $V$, the number of subsets used by the method.

For the huberized Lasso with a fixed grid, the hqreg\_raw function from the R package \href{https://cran.r-project.org/web/packages/hqreg/index.html}{hqreg} \cite{hqreg:2017} is used with a fixed grid designed to emulate the default choice: a geometrically decreasing sequence of length $100$, with maximum value $\lambda_{max}$ and minimum value $\lambda_{min} = 0.05 \lambda_{max}$. The fixed value of $\lambda_{max}$ is obtained by averaging the (data-dependent) default value chosen by hqreg\_raw over $10$ independent datasets. To compute the reparametrization given by Definition \ref{def_param_lasso}, we implemented the LARS-based algorithm described by Rosset and Zhu \cite{Rosset-Zhu2007}, which allows to compute the whole regularization path. 
% Here, we consider the selection of the penalization parameter $\lambda$ for the Lasso-penalized huber loss regression algorithm defined in \ref{sec_hub}.

I.i.d training samples of size $n = 100$ are generated according to a distribution $(X,Y)$, where $X \in \mathbb{R}^{1000}$ and $Y = w_*^T X + \varepsilon$, with $\varepsilon$ independent from $X$. To illustrate the robustness of the estimators, Cauchy noise is used: $\varepsilon \sim \text{Cauchy}(0,\sigma)$.
The performance of Agghoo and cross-validation may depend on the presence of correlations between the covariates $X$ and the sparsity of the ground truth $w_*$. To investigate these effects, three parametric families of distribution are considered for $X$, in sections \ref{exp_setup1}, \ref{exp_setup2} and \ref{exp_setup3}.

The risk of each method is evaluated on an independent training set of size $500$, and results are averaged over $1000$ repetitions of the simulation. More precisely, $1000$ training sets $D_j$ of size $n = 100$ are generated, along with $1000$ test sets $(X_{i,j}', Y_{i,j}')_{1 \leq i \leq 500}$, each of size $500$.
For each simulation $j$ and any learning rule $\learnrule_{\tau,V}$ among the six obtained by combining Agghoo, monte carlo CV and AGCV with either a fixed grid or the zero-norm parametrization, the average excess risk \[\hat{R}_j(\learnrule, \tau, V) =  \frac{1}{500} \sum_{i = 1}^{500} \left[\phi_c \left( Y_{i,j}' - \learnrule_{\tau,V}(D_j)(X_{i,j}') \right) - \phi_c \left( Y_{i,j}'  - \bayes(X_{i,j}') \right) \right]\] is computed on the test set 
% as well as the variance
% \[ \hat{S}(\learnrule, \tau, V)^2 =  \frac{1}{1000} \sum_{j = 1}^{1000} \left[ \frac{1}{500} \sum_{i = 1}^{500} \phi_c \left(\learnrule_{\tau,V}(D_j)(X_{i,j}'), Y_{i,j}' \right) - \phi_c \left(\bayes(X_{i,j}'), Y_{i,j}' \right) \right]^2 - \hat{R}(\learnrule, \tau, V)^2,  \]
for all values of $V \in \left\{1,2,5,10 \right\}$ and $\tau \in \left\{ \frac{i}{10} : 1 \leq i \leq 9 \right\}$.

\subsection{Experimental setup 1} \label{exp_setup1}

$X$ is generated using the formula $ X_i = \frac{1}{\Norm{u}_2} \sum_{j = 1}^d u_{i-j} Z_j$,
where $Z_j$ are independent standard Gaussian random variables, $u_i = \mathbb{I}_{|i| \leq cor} e^{- \frac{2.33^2 i^2}{2cor^2}}$ and $cor \in \mathbb{N}$ is a parameter regulating the strength of the correlations. The regression coefficient has a support of size $r = 3*k$ drawn at random from $[|1;1000|]$, and is defined by $w_{*,j} = u_{*,g(j)}$, where $g$ is a uniform random permutation, $u_{*,j} = b$ if $1 \leq j \leq k$ and $u_{*,j} = \frac{b}{4}$ if $2k+1 \leq j \leq 3k$, with $b$ calibrated so that $\Norm{X w_*}_{L^2} = 1$. The noise parameter is $\sigma = 0.08$, while the Huber loss parameter $c$ is set to $2$ -- a suboptimal choice in this setting, but convenient for computing the huberized Lasso regularization path.   

\paragraph{Choice of $\tau$ parameter}
For all methods, in most cases the optimal value of $\tau$ is $0.8$ or $0.9$, similarly to what was observed in the rkhs case \cite{agghoo_rkhs}, where $\tau = 0.8$ was recommended. Table 1 displays the quantity
\[ \hat{G}(\learnrule, \tau, V) = \frac{\text{Mean}\left[(\hat{R}_j(\learnrule,\tau,V) - \hat{R}_j(\learnrule,\tau_*,V))_{1 \leq j \leq 1000} \right]}{\text{Sd}\left[ (\hat{R}_j(\learnrule,\tau,V) - \hat{R}_j(\learnrule,\tau_*,V))_{1 \leq j \leq 1000} \right]}, \]
where Sd denotes the (empirical) standard deviation and $\tau_*$ the optimal choice of $\tau$, $\tau_* = \argmin_{\tau \in \{0.1,\ldots,0.9\}} \text{Mean}\left[(\hat{R}_j(\learnrule,\tau,V))_{1 \leq j \leq 1000} \right].$
Thus, values of $\hat{G}(\learnrule, \tau, V)$ bigger than a few units suggest that $\tau$ is suboptimal to a statistically significant degree. 
When $\tau_* = 0.9$, $\hat{G}(\learnrule, 0.8, V)$ is displayed in black on table 1. When $\tau_* = 0.8$, $\hat{G}(\learnrule, 0.9, V)$ is displayed in blue on table 1. Exceptions where $\tau_* \notin \{ 0.8,0.9 \}$ are highlighted in red, with the value $\min \left( \hat{G}(\learnrule, 0.8, V), \hat{G}(\learnrule, 0.9, V)  \right)$.  

% latex table generated in R 3.6.2 by xtable 1.8-4 package
% Fri Jan 10 15:19:16 2020
\begin{table}[ht]
\centering
\begin{tabular}{cl|c|cc|cc|cc|}
  \hline
  & & & \multicolumn{2}{c|}{r = 150} 
 & \multicolumn{2}{c|}{r = 60} & \multicolumn{2}{c|}{r = 24} \\
 \hline
 & method & V & 15 & 1 & 15 & 1 & 15 & 1 \\ 
  \hline
1 & grid agghoo & 1 & 2.2 & \textcolor{red}{2.7} & 3.0 & \textcolor{blue}{2.7} & 0.5 & 5.6 \\ 
  2 & grid agghoo & 2 & 2.5 & \textcolor{red}{2.1} & 3.1 & \textcolor{blue}{1.4} & 1.0 & 7.9 \\ 
  3 & grid agghoo & 5 & 2.5 & \textcolor{blue}{6.8} & 3.5 & 0.6 & 0.6 & 11.9 \\ 
  4 & grid agghoo & 10 & 0.7 & \textcolor{blue}{7.2} & 3.7 & 1.1 & 4.5 & 16.7 \\ 
  \hline
  5 & grid cv & 1 & 1.0 & \textcolor{red}{3.9} & \textcolor{blue}{1.6} & \textcolor{red}{0.1} & \textcolor{blue}{1.2} & 1.5 \\ 
  6 & grid cv & 2 & 0.8 & \textcolor{red}{5.0} & \textcolor{blue}{2.6} & \textcolor{red}{0.5} & \textcolor{blue}{1.4} & 1.1 \\ 
  7 & grid cv & 5 & 1.4 & \textcolor{red}{2.8} & 1.5 & \textcolor{blue}{0.8} & \textcolor{blue}{0.5} & 3.7 \\ 
  8 & grid cv & 10 & 2.0 & \textcolor{red}{2.6} & 2.9 & \textcolor{blue}{1.1} & 1.6 & 5.9 \\ 
  \hline
  9 & grid agcv & 1 & 1.0 & \textcolor{red}{3.9} & \textcolor{blue}{1.6} & \textcolor{red}{0.1} & \textcolor{blue}{1.2} & 1.5 \\ 
  10 & grid agcv & 2 & \textcolor{blue}{0.3} & \textcolor{red}{2.0} & \textcolor{blue}{1.4} & \textcolor{blue}{1.9} & \textcolor{blue}{0.3} & 0.8 \\ 
  11 & grid agcv & 5 & 0.3 & \textcolor{blue}{2.2} & \textcolor{blue}{0.5} & \textcolor{blue}{0.7} & \textcolor{blue}{0.5} & 1.1 \\ 
  12 & grid agcv & 10 & \textcolor{blue}{0.5} & \textcolor{blue}{0.4} & 0.0 & \textcolor{blue}{0.3} & \textcolor{blue}{0.8} & 1.0 \\ 
  \hline
  13 & $0-$norm agghoo & 1 & 1.3 & \textcolor{red}{4.1} & 2.0 & 0.3 & 0.5 & 5.6 \\ 
  14 & $0-$norm agghoo & 2 & 3.0 & \textcolor{red}{1.4} & 3.2 & 1.3 & 1.9 & 9.2 \\ 
  15 & $0-$norm agghoo & 5 & 4.0 & \textcolor{blue}{6.7} & 5.1 & 3.3 & 4.0 & 13.7 \\ 
  16 & $0-$norm agghoo & 10 & 4.6 & \textcolor{blue}{7.3} & 7.0 & 3.7 & 5.2 & 18.5 \\ 
  \hline
  17 & $0-$norm cv & 1 & \textcolor{blue}{4.3} & \textcolor{red}{9.4} & \textcolor{blue}{4.3} & \textcolor{red}{1.1} & \textcolor{red}{2.0} & \textcolor{blue}{3.9} \\ 
  18 & $0-$norm cv & 2 & \textcolor{blue}{1.9} & \textcolor{red}{7.2} & \textcolor{blue}{1.8} & \textcolor{blue}{4.4} & \textcolor{blue}{4.8} & \textcolor{blue}{2.7} \\ 
  19 & $0-$norm cv & 5 & 2.7 & \textcolor{red}{5.3} & 2.4 & \textcolor{blue}{3.3} & \textcolor{blue}{1.5} & \textcolor{blue}{0.7} \\ 
  20 & $0-$norm cv & 10 & 6.1 & \textcolor{red}{4.6} & 5.4 & \textcolor{blue}{3.5} & \textcolor{blue}{0.6} & \textcolor{blue}{0.1} \\ 
  \hline
  21 & $0-$norm agcv & 1 & \textcolor{blue}{4.3} & \textcolor{red}{9.4} & \textcolor{blue}{4.3} & \textcolor{red}{1.1} & \textcolor{red}{2.0} & \textcolor{blue}{3.9} \\ 
  22 & $0-$norm agcv & 2 & \textcolor{blue}{1.9} & \textcolor{red}{5.8} & \textcolor{blue}{2.4} & \textcolor{blue}{4.5} & \textcolor{blue}{5.9} & \textcolor{blue}{3.5} \\ 
  23 & $0-$norm agcv & 5 & 2.1 & \textcolor{red}{1.9} & 1.0 & \textcolor{blue}{4.0} & \textcolor{blue}{5.7} & \textcolor{blue}{3.7} \\ 
  24 & $0-$norm agcv & 10 & 4.5 & \textcolor{red}{1.0} & 3.3 & \textcolor{blue}{3.6} & \textcolor{blue}{7.3} & \textcolor{blue}{3.9} \\ 
   \hline
\end{tabular}
\caption{$\hat{G}(\learnrule, \tau, V)$ for sub-optimal $\tau \in \{0.8,0.9\}$ and various distributions. 
Colours show optimal $\tau_*$: blue for $\tau_* = 0.8$, black for $0.9$, red when $\tau_* \notin \{0.8,0.9\}$.}
\end{table}

Most of the exceptions $\tau_* \notin \{0.8,0.9\}$ occur on the column $r = 150$, $cor = 1$, while most of the others are of low statistical significance, with values less than $1.1$ on the fourth column ($r = 60$ and $cor = 1$).  
Thus, table 1 confirms the claim that $\tau_* \in \{0.8,0.9\}$ for all methods, in most cases. 
For grid agghoo, $0-$norm agghoo, grid agcv and $V \geq 5$, $\tau_* \in \{0.8,0.9\}$ for all simulations.
Comparing now $\tau = 0.8$ and $\tau = 0.9$, grid agghoo and $0-$norm agghoo with $V \geq 5$ show a clear pattern: $\tau = 0.9$ is better or as good as $\tau = 0.8$ in all cases except $r = 150, cor = 1$ where $\tau = 0.8$ is significantly better. For other methods, results are not so clear and the difference in risk between the two values of $\tau$ is often insignificant.

\paragraph{Choice of $V$}
For all methods considered, performance is expected to improve when $V$ is increased, but by how much? If the performance increase is too slight, it may not be worth the additional computational cost. In figure $1$, the mean excess risk for the optimal value of $\tau$ is displayed as a function of $V$, with error bars corresponding to one standard deviation. The scale used for the vertical axis in each graph is the average excess risk of the oracle with respect to the fixed grid over the $\lambda$ parameter. Quantifying performance as a percentage of the oracle risk, when $cor = 15$, Agghoo improves by roughly $20\%$ from $V = 1$ to $V = 2$, by roughly $10 \%$ from $V = 2$ to $V = 5$ and by a few percent more from $V = 5$ to $V = 10$. CV with the standard grid behaves similarly in these two simulations, while CV with the zero-norm parametrization shows much less improvement when $V$ is increased. Thus, taking $V \geq 5$ is advantageous, but there are clearly diminishing returns to choosing $V$ much larger than this. For CV with the zero-norm parametrization, $V = 2$ seems sufficient in these simulations .   

% For all four experiments and all methods except agcv, the risk is seen to decrease as a function of $V$,
% though the difference between $V = 5$ and $V = 10$ is rarely significant - error bars typically overlap.
% Moreover, the rate of decrease is considerably smaller between $V = 5$ and $V = 10$ than between $V = 2$ and $V = 5$, and can be conjectured to be still smaller beyond $V = 10$.
% These two facts suggest that it is unnecessary to take $V$ much larger than $5$.
\begin{figure}
\includegraphics[width = 6cm, height = 7cm]{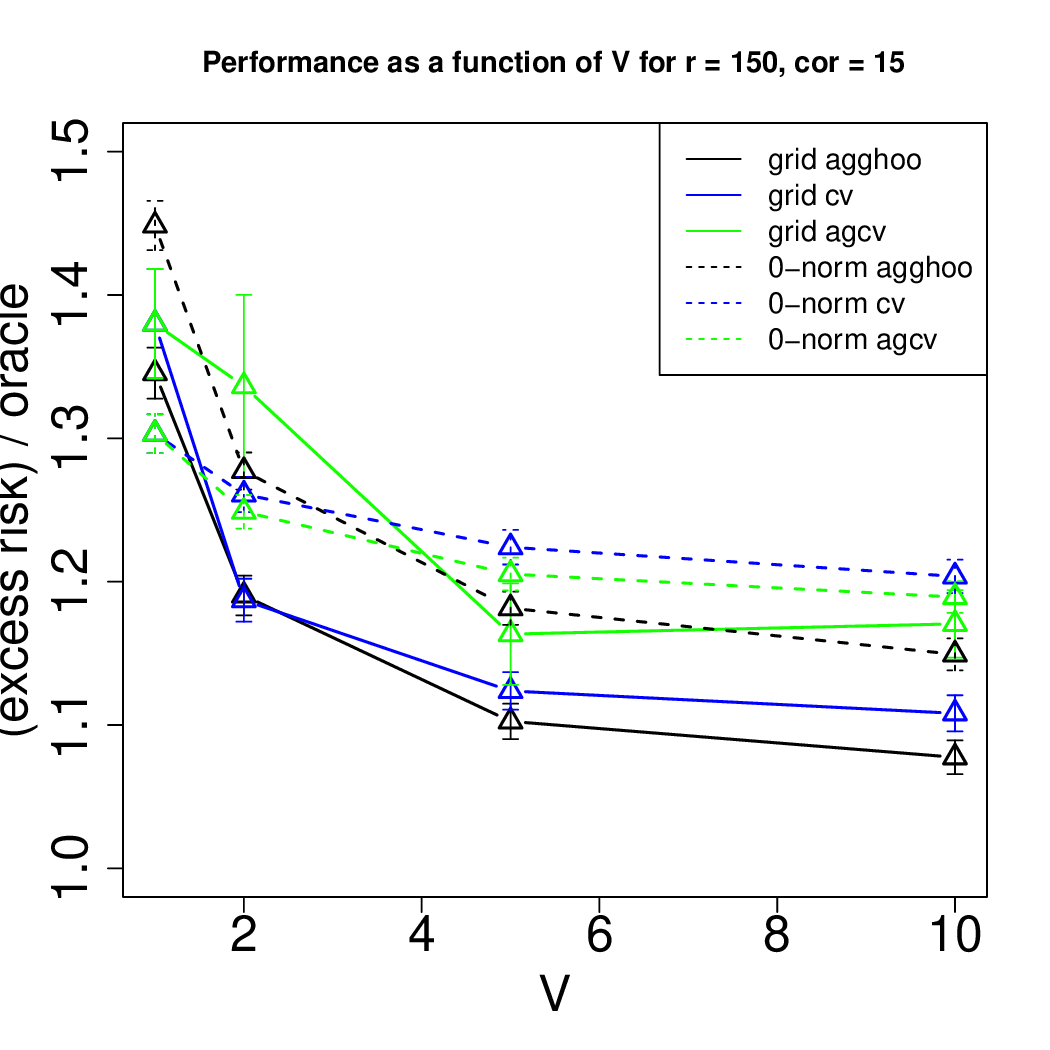}
\includegraphics[width = 6cm, height = 7cm]{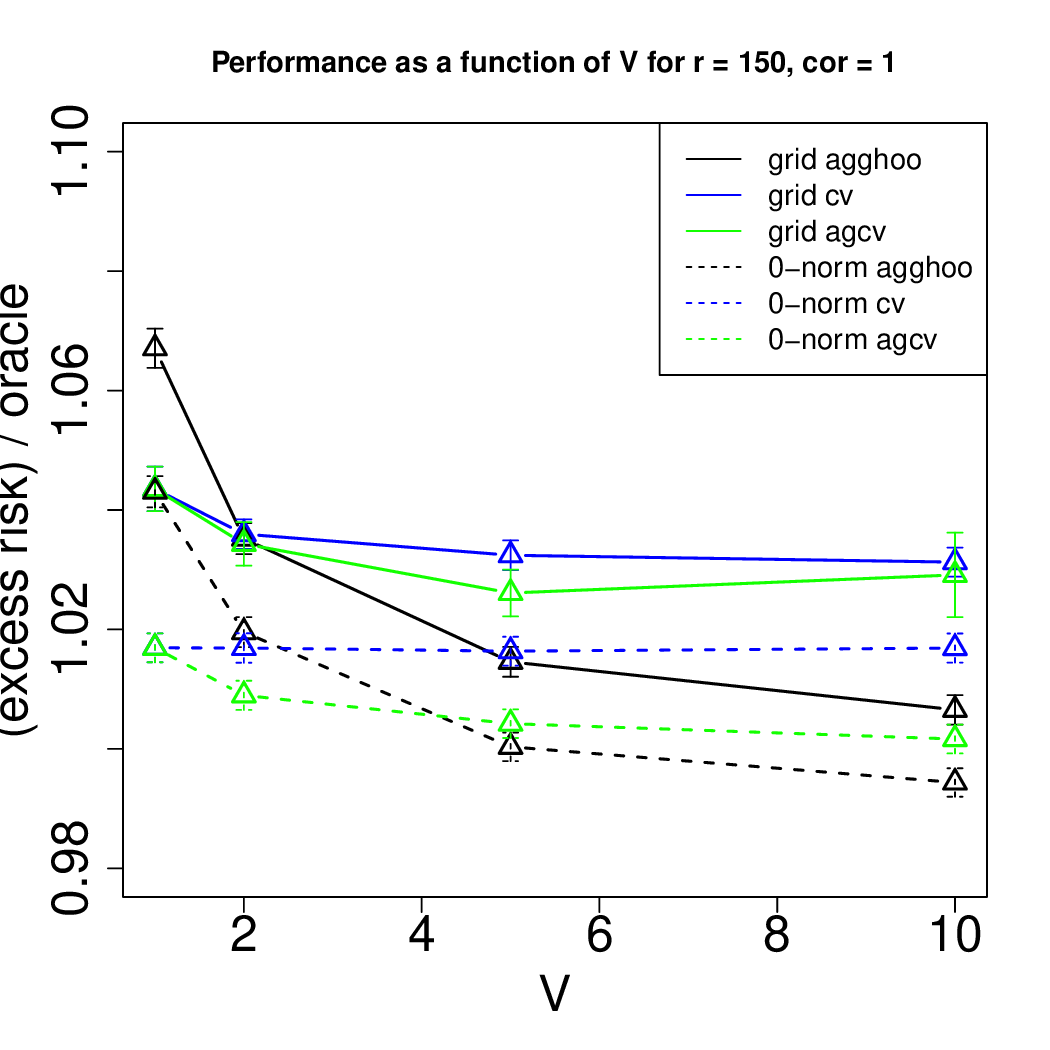}
\includegraphics[width = 6cm, height = 7cm]{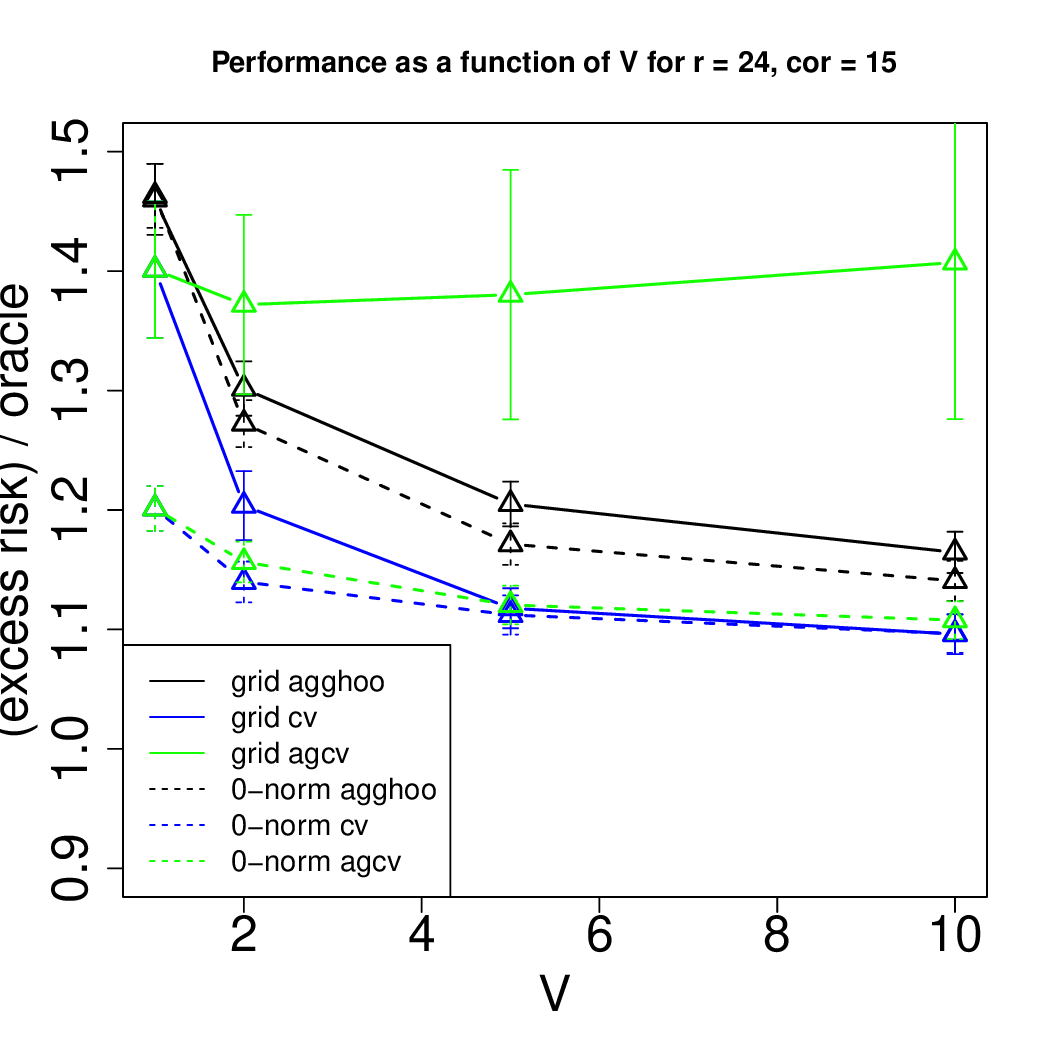}
\includegraphics[width = 6cm, height = 7cm]{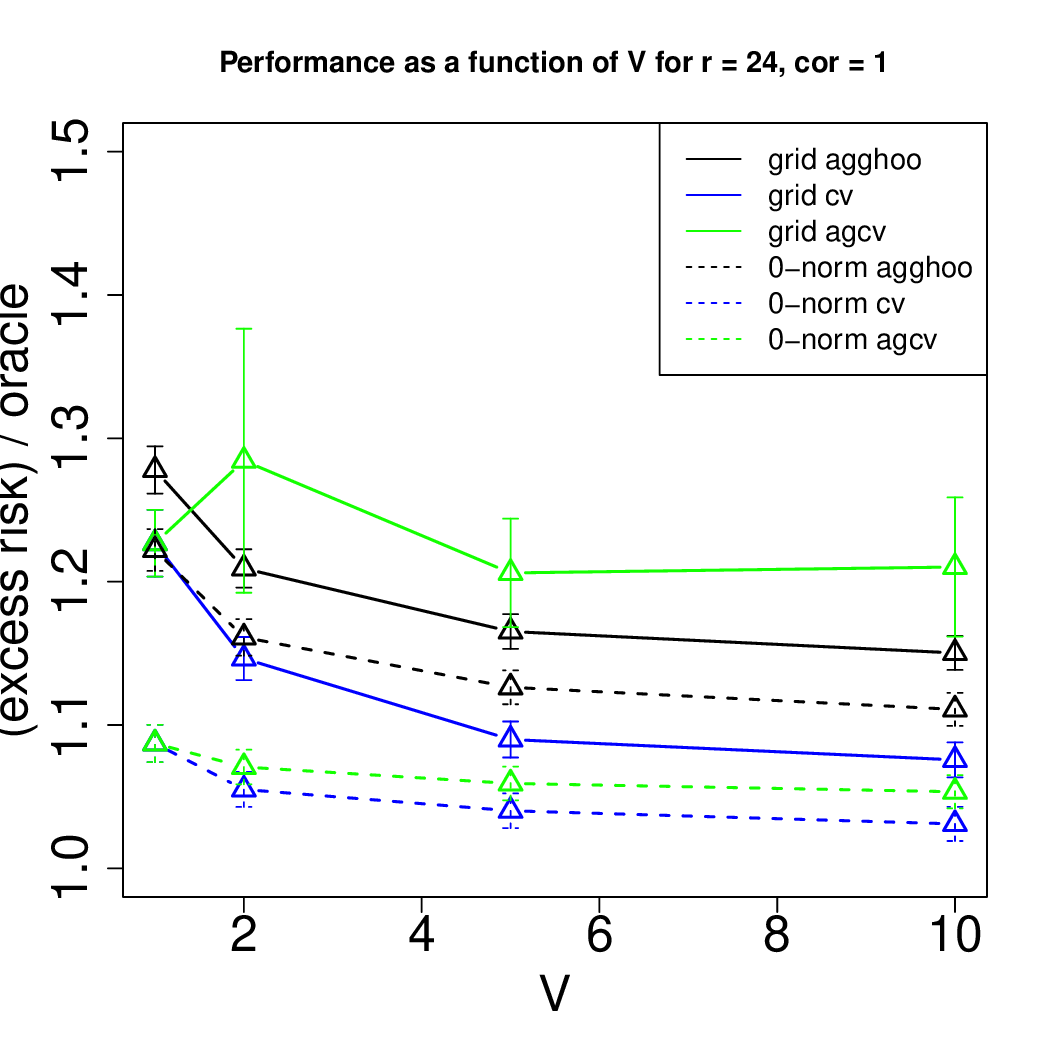}
\caption{Performance relative to the oracle, as a function of $V$}
\end{figure}

\paragraph{Comparison between methods}
From figure 1, it appears that grid agcv is a very poor choice, being worse than both grid agghoo and grid cv for all values of $V$ when $r = 150, cor = 15$ , and being the worst of all the methods for $V \geq 2$ when $r = 24$, as well as highly unstable, as the size of the error bars clearly shows.

Interestingly, $0-$norm agcv behaves much better, being the second best method when $cor = 1$, and very close to the best when $r = 24$ and $cor = 15$.

Generally speaking, of the two types of parametrization of the Lasso,
the zero-norm parametrization appears to perform better than the standard grid when correlations are small ($cor = 1$), while the performance is significantly worse when $r = 150$ and $cor = 15$.   

Comparing now Agghoo and CV, Agghoo appears to be better than CV when $V \geq 2$ in situations where $r$ is larger ($r = 150$). This seems to hold for both the standard parametrization (grid agghoo) and the zero-norm one ($0-$norm agghoo).  The relation is reversed for small $r$, with CV performing better than Agghoo for all values of $V$ when $r = 24$.

\paragraph{Further studies}

The previous simulations suggest that Agghoo performs better than CV in the case of high intrinsic dimension. This behaviour is logical, since the cross-validated Lasso will ignore some predictive variables when there are too many of them, and randomized aggregation may help recover more of the support. However, the effect of correlations is unclear. Experimental setup 1 mixes different types of correlations: correlations between predictive variables, correlations between predictive and non-predictive variables, and correlations among non-predictive variables. It is possible that one type of correlation favours Agghoo while another favours CV. 

To gain a more accurate idea of when Agghoo is advantageous over CV, two more settings are studied, considering separately correlations among predictive variables, and between predictive and non-predictive variables. Since previous simulations showed that $\tau = 0.8,0.9$ and $V = 10$ were the optimal parameters, only those parameters will be considered in the following. 

Since the choice of lasso parametrization did not seem to affect the relative performance of Agghoo and CV, we only consider the standard parametrization, as it is more popular and also easier to use in our simulations.
Agcv is not considered either, since it was discovered to be unreliable in previous simulations.

\subsection{Experimental setup 2: correlations between predictive and noise variables} \label{exp_setup2}
Let $r$ be the number of predictive variables and let each predictive covariate have $s$ "noise" covariates which are correlated with it at level $\rho = 0.8$. Assume that $rs \leq d$, where $d$ is the total number of variables.
Let $(Z^0_j)_{1 \leq j \leq r}$, $(Z_{j,i})_{1\leq j \leq r, 1 \leq i \leq s}$ and $(W_k)_{1 \leq k \leq d - rs}$ be independent standard gaussian variables.
For any $j \in [|0:r-1|]$ and any $i \in [|1;s|]$, let $X_{js + i} = \sqrt{0.8} Z^0_{j+1} + \sqrt{0.2} Z_{j+1,i}$ and for $rs < i \leq d$, let $X_i = W_{i - rs}$.
For the regression coefficient, choose $w_* = \frac{3*u}{\Norm{Xu}_{L_2}}$, where $u = (\mathbb{I}_{s | (j-1)} \mathbb{I}_{j \leq rs})_{1 \leq j \leq d}$. Let then $Y$ be distributed conditionnally on $X$ as $\text{Cauchy}(\langle w_*, X \rangle, 0.3)$.
The loss function used here is $\phi_c$ with $c = 2$. 

\paragraph{Results} 
Figure \ref{fig_anticorr} shows a bar plot of the average excess risk of CV and Agghoo as a fraction of the average risk of the oracle. 90 \% error bars were estimated using a normal approximation. Parameters used for Agghoo and CV were $\tau = 0.9$ and $V = 10$ ($\tau = 0.8$ yields similar result). 

\begin{figure} 
 \includegraphics[scale = 0.7]{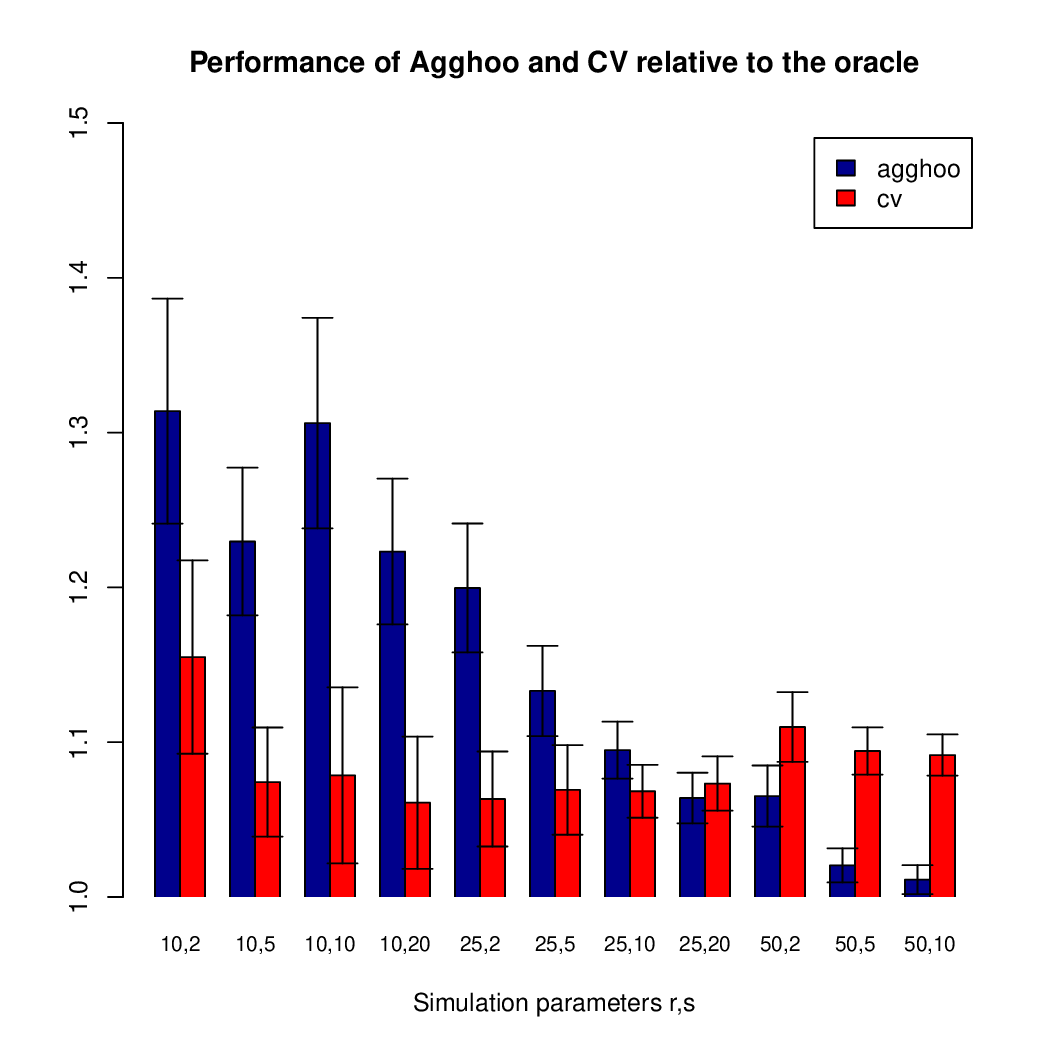}
 \caption{Relative risk in experimental setup 2 (section \ref{exp_setup2})}
 \label{fig_anticorr}
\end{figure}

Overall, Agghoo's risk relative to the oracle significantly decreases as the zero-norm of $w_*$ increases from $r = 10$ to $r = 50$ , as was observed in section \ref{exp_setup1} . For $r = 25$ and $r = 50$ separately, the risk relative to the oracle significantly decreases as $s$ increases from $2$ to $10$. For $r = 10$, this trend is unclear due to the random errors.

In contrast, CV's performance relative to the oracle shows no statistically significant trend either as a function of $r$ or as a function of $s$.

As a result of these trends, Agghoo performs significantly worse than CV for $r = 10$ and significantly better when $r = 50$, especially when $s \geq 5$. When $r = 25$, CV performs significantly better than Agghoo for $s = 2$ and $s = 5$ and they perform similarly when $s = 10$ and $s = 20$.

\subsection{Experimental setup 3: correlations between predictive variables}
\label{exp_setup3}

We consider now predictive covariates which are correlated between them, and independent from the unpredictive covariates.
As above, let $r$ denote the number of predictive variables and $\rho > 0$ be the level of correlations. Let $Z_0$, $(Z_i)_{1 \leq i \leq r}$ and $(W_i)_{1 \leq i \leq d-r}$ be standard Gaussian random variables. The random variable $X$ is then defined by $X_i = \sqrt{\rho} Z_0 + \sqrt{1-\rho}Z_i$ for $1 \leq i \leq r$ and $X_i = W_{i - r}$ for $r+1 \leq i \leq d$.  As in section \ref{exp_setup2}, the regression coefficient $w_*$ is a constant vector of the form $\frac{3*u}{\Norm{Xu}_{L^2}}$, where this time $u = \left(\mathbb{I}_{1 \leq i \leq r} \right)_{1 \leq i \leq d}$. 

$Y$ is distributed conditionnally on $X$ as $\text{Cauchy}(\langle X, w_* \rangle,0.3)$ and the loss function used is the Huber loss $\phi_2$.

\paragraph{Results} Figure \ref{fig_corr_simple} shows a barplot generated in the same way as in section \ref{exp_setup2}. Parameters used for Agghoo and CV were $V = 10$ and $\tau = 0.8$, which is optimal in this case for both Agghoo and CV.

\begin{figure} 
    \includegraphics[scale = 0.7]{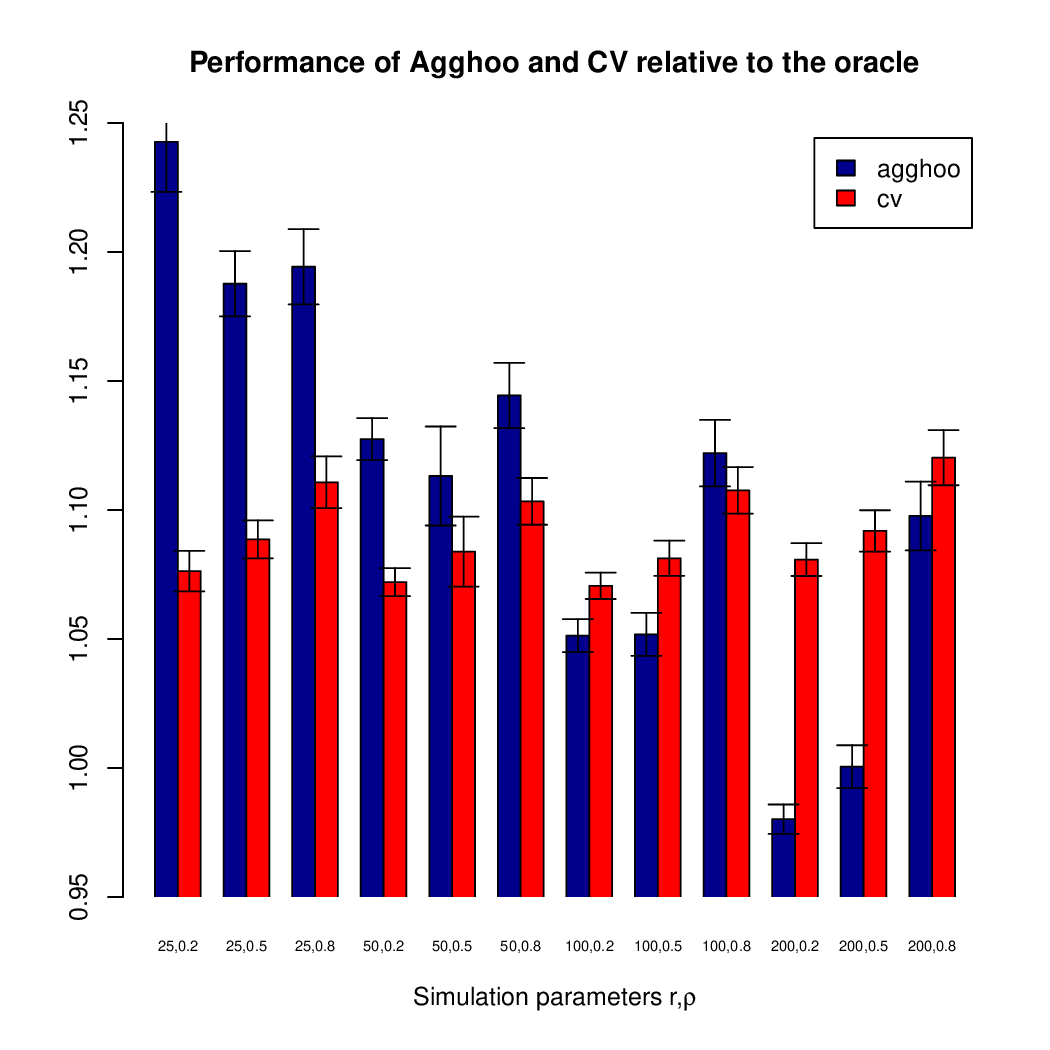}
  \caption{Relative risk in experimental setup 3 (section \ref{exp_setup3})}
  \label{fig_corr_simple}
\end{figure}

As in previous simulations, Agghoo's performance relative to the oracle improves significantly when the intrinsic dimension $r$ grows from $25$ to $200$, for a given value of $\rho$. The decrease in relative risk is faster for small values of $\rho$. As a result, Agghoo performs best, relative to the oracle, when $\rho = 0.2$ for $r = 200$, whereas best performance seems to occur at $\rho = 0.5$ for smaller values of $r$, up to random errors. 

For cross-validation, the relative risk seems more or less unaffected by the dimension $r$, but shows an increasing trend as a function of $\rho$ for all values of $r$. 

As a result, Agghoo performs better than CV for $r = 200$ and for $r = 100$ and $\rho = 0.2,0.5$. For $r = 200$ and $\rho = 0.2$, Agghoo even performs significantly better than the oracle! This is possible, since the Agghoo regression coefficient $\hat{\theta}_{\mathcal{T}}^{ag}$ does not itself belong to the Lasso regularization path.    

\FloatBarrier

\section{Conclusion}
Aggregated hold-out (Agghoo) satisfies an oracle inequality (Theorem \ref{agcv_hub}) in sparse linear regression with the Huber loss. This oracle inequality is asymptotically optimal in the non-parametric case where the intrinsic dimension tends to $+\infty$ with the sample size $n$, provided that an $L^{\psi_1} - L^2$ norm inequality holds on the set of sparse linear predictors. The condition holds for gaussian vectors and for classical approximation spaces in non-parametric regression. In the case of the trigonometric basis, this approach yields an oracle inequality in which the total dimension $d$ does not appear. 
% When $X$ is a vector of independent Bernoulli variables, this condition amounts to restricting the zero-norm of the coefficients to be less than a constant times $\frac{n}{\log n}$..   

When Monte-Carlo subsampling is used (Definition \ref{def_mcbag}), Agghoo has two parameters, $\tau$ and $V$. Theoretically, it is shown that Agghoo's performance always improves when $V$ grows for a fixed $\tau$. Simulations show a large improvement from $V = 1$ to $V = 5$ in some cases, but diminishing returns for $V > 5$. 
With respect to $\tau$, simulations show that $\tau = 0.8$ or $\tau = 0.9$ is optimal or near optimal in most cases. In particular, a default choice of $V = 10$, $\tau = 0.8$ seems reasonable.

Compared to cross-validation with the same number of splits $V$, simulations show that Agghoo performs better when the intrinsic dimension $r$ is large enough ($r = 150$ in section \ref{exp_setup1}, $r = 50$ in section \ref{exp_setup2} and $r = 100$ in \ref{exp_setup3}) for $n = 100$ observations and $d = 1000$ covariates. Correlations between predictive and non-predictive covariates, which increase the number of covariates correlated with the response $Y$, clearly favour Agghoo relative to CV and the oracle, whereas the effect of  correlations between predictive covariates is ambiguous.

\section*{Acknowledgements}
While finishing the writing of this article, the author (Guillaume Maillard) has received funding from the European Union's Horizon 2020 research and innovation program under grant agreement No 811017.

\appendix 
\section{Proof of Proposition \ref{prop_minimax_huber_reg}} \label{proof_prop_minimax}
% Risque minimax
The proof follows the same lines as the proof of \cite[Theorem 1]{Mourtada:2019}, with some differences due to the non-quadratic risk.

Since $\hat{\theta}$ is allowed to depend on $\Sigma$, which is positive definite by assumption, we can always replace
the $X_i$ by $\Sigma^{- \frac{1}{2}} X_i$. Thus, it can be assumed without
loss of generality that $\Sigma = I_n$. 
Using the notation of Proposition \ref{prop_minimax_huber_reg}
\[ \ell(\theta_*^T,\theta^T) = E \left[ \phi_c(\sigma \varepsilon + \langle \theta_* - \theta, X \rangle) - \phi_c(\sigma \varepsilon) \right], \]
where $\varepsilon,X$ are assumed to be independent from the sample $D_n$.
Since $\varepsilon,X$ are independent, centered normal variables,
$\sigma \varepsilon + \langle \theta_* - \theta, X \rangle$ is centered normal,
with variance $\sigma^2 + \Norm{\theta_* - \theta}_2^2$.

It follows that 
\[ \ell(\theta_*^T,\theta^T) = g_c(\sqrt{\sigma^2 + \Norm{\theta_* - \theta}_2^2}) - g_c(\sigma), \text{ where } g_c(x) := E [\phi_c(x Z)] \text{ for } Z \sim \mathcal{N}(0,1). \]
Let also $g_{c,\sigma}(r) = g_c(\sqrt{r^2 + \sigma^2}) - g_c(\sigma)$, so that $\ell(\theta_*^T,\theta^T) = g_{c,\sigma}(\Norm{\theta_* - \theta}_2)$.

Consider the prior $\Pi_\lambda = \mathcal{N}(0,\frac{\sigma^2}{\lambda n} I_d)$ on $\theta_*$. Then a classical computation \cite{Mourtada:2019} shows that the posterior $\hat{\pi}_n = \Pi_\lambda(\cdot | D_n)$ is gaussian and centered at the ridge estimator
\[ \hat{\theta}_{\lambda,n} = (\hat{\Sigma}_n + \lambda I_d)^{-1} \frac{1}{n} \sum_{i = 1}^n Y_i X_i, \]
where $\hat{\Sigma}_n$ is the empirical covariance matrix. Fix a sample $D_n$ and let $\tilde{\theta} \sim \hat{\pi}_n$ be independent from $\varepsilon,X$.
Notice that
\begin{align*}
E [\nabla_{\theta} \ell(\tilde{\theta}^T,\theta^T) ]
&= E [X \phi_c'(\sigma \varepsilon + \langle \tilde{\theta} - \theta, X \rangle) ] \\
&= E \left[ X E \bigl[\phi_c'(\sigma \varepsilon + \langle \tilde{\theta} - \theta, X \rangle) | X \bigr] \right].
\end{align*}
Set now $\theta = \hat{\theta}_{\lambda,n}$. Since $\tilde{\theta} \sim \hat{\pi}_n$, knowing $X$, $\langle \tilde{\theta} - \hat{\theta}_{\lambda,n}, X \rangle$ is centered normal and independent from $\varepsilon$, which is also centered normal. It follows that $\mathbb{E} \bigl[\phi_c'(\sigma \varepsilon + \langle \tilde{\theta} - \theta, X \rangle) | X \bigr] = 0$, since $\phi_c'$ is an odd function. This shows that $\hat{\theta}_{\lambda,n}$ is a Bayes estimator with respect to the prior $\Pi_\lambda$ and the loss function $\ell$.
  
Thus, for any estimator $\hat{\theta}$,
\begin{align*}
\sup_{\theta_*} \mathbb{E}_{D_n \sim P_{\theta_*}^{\otimes n}} \left[ \ell(\hat{\theta}(D_n)^T,\theta_*^T) \right]
&\geq E_{\theta_* \sim \Pi_\lambda} \left[ \mathbb{E}_{D_n \sim P_{\theta_*}^{\otimes n}} \left[ \ell(\theta_*^T, \hat{\theta}^T(D_n)) \right] \right] \\
&\geq E_{\theta_* \sim \Pi_\lambda} \left[ \mathbb{E}_{D_n \sim P_{\theta_*}^{\otimes n}} \left[ \ell(\theta_*^T, \hat{\theta}^T_{\lambda_,n}(D_n)) \right] \right] \\
&= E_{\theta_* \sim \Pi_1} \left[ P_{\frac{\theta_*}{\sqrt{\lambda}}}^{\otimes n} \left[ \ell(\tfrac{\theta_*^T}{\sqrt{\lambda}}, \hat{\theta}^T_{\lambda_,n}(D_n)) \right] \right].
\end{align*} 
$\ell(\theta_*^T,\theta^T) = g_{c,\sigma}(\Norm{\theta_* - \theta}_2)$, so by convexity of $\ell(\theta_*^T,\cdot)$, $g_{c,\sigma}$ must be convex. Hence, by Jensen's inequality,
\[ \mathbb{E} \left[ \ell(\theta_*^T,\hat{\theta}^T_{\lambda_,n}(D_n))  \right] \geq 
g_{c,\sigma} \left( \mathbb{E} \left[ \Norm{\theta_* - \hat{\theta}_{\lambda_,n}(D_n)}_2 \right] \right). \]
Under $P_{\frac{\theta_*}{\sqrt{\lambda}}}$, $Y_i = \langle \frac{\theta_*}{\sqrt{\lambda}}, X_i \rangle + \sigma \varepsilon_i $, so
\begin{align*}
\hat{\theta}_{\lambda,n} - \theta_* &=
(\hat{\Sigma}_n + \lambda I_d)^{-1} \left( \frac{1}{n} \sum_{i = 1}^n X_i X_i^T \frac{\theta_*}{\sqrt{\lambda}} + \sigma \varepsilon_i X_i \right) - \frac{\theta_*}{\sqrt{\lambda}} \\
&= (\hat{\Sigma}_n + \lambda I_d)^{-1} \hat{\Sigma}_n \frac{\theta_*}{\sqrt{\lambda}} - \frac{\theta_*}{\sqrt{\lambda}} + \frac{\sigma}{n} \sum_{i = 1}^n \varepsilon_i (\hat{\Sigma}_n + \lambda I_d)^{-1} X_i \\
&= - \sqrt{\lambda} (\hat{\Sigma}_n + \lambda I_d)^{-1} \theta_* 
+  \frac{\sigma}{n} \sum_{i = 1}^n \varepsilon_i (\hat{\Sigma}_n + \lambda I_d)^{-1} X_i.
\end{align*}
Since $d < n$, $\hat{\Sigma}_n$ is almost surely non-degenerate.
It follows that 
\[ \lim_{\lambda \to 0} \Norm{\hat{\theta}_{\lambda,n} - \theta_*} = \Norm{\frac{\sigma}{n} \sum_{i = 1}^n \varepsilon_i \hat{\Sigma}_n^{-1} X_i}. \]
% \begin{align*}
% \lim_{\lambda \to 0} \Norm{\hat{\theta}_{\lambda,n} - \theta_*} &= \Norm{\frac{\sigma}{n} \sum_{i = 1}^n \varepsilon_i \hat{\Sigma}_n^{-1} X_i} \\
% \forall \lambda \in (0,1], \Norm{\hat{\theta}_{\lambda,n} - \theta_*} &\leq 
% \Norm{\Sigma_n^{-1} \theta_*} + \Norm{\frac{1}{n} \sum_{i = 1}^n \varepsilon_i \hat{\Sigma}_n^{-1} X_i}.
% \end{align*}
Let $\hat{r}_n = \frac{\sigma}{n} \sum_{i = 1}^n \varepsilon_i \hat{\Sigma}_n^{-1} X_i$. By Fatou's lemma,
\[ \sup_{\theta_*} \mathbb{E}_{D_n \sim P_{\theta_*}^{\otimes n}} \left[ \ell(\theta_*^T,\hat{\theta}^T(D_n)) \right] 
\geq g_{c,\sigma}\left( \mathbb{E} [\Norm{\hat{r}_n}_2] \right)   \]
Since the $\varepsilon_i$ are iid normal $\mathcal{N}(0,1)$ and independent from the $X_i$, conditionnally on $X_1,\ldots,X_n$, $\hat{r}_n$ is centered normal, with covariance matrix
\[ \frac{\sigma^2}{n^2} \sum_{i = 1}^n  \hat{\Sigma}_n^{-1} X_i X_i^T \hat{\Sigma}_n^{-1} = \frac{\sigma^2}{n} \hat{\Sigma}_n^{-1}. \]
It follows by lemma \ref{lem_lb_exp_norm_gauss} that
\[ \mathbb{E} [\Norm{\hat{r}_n}_2 | (X_i)_{1 \leq i \leq n}] \geq \sqrt{\frac{2}{\pi}} \frac{\sigma}{\sqrt{n}} \sqrt{Tr(\hat{\Sigma}_n^{-1})}. \]
By convexity of the function $M \mapsto \sqrt{Tr(M^{-1})}$ 
on the positive definite matrices (lemma \ref{lem_conv_sqrt_tr_inv_mat}),
\[ \mathbb{E} [\Norm{\hat{r}_n}_2] \geq \sqrt{\frac{2}{\pi}} \frac{\sigma}{\sqrt{n}} \sqrt{Tr \left( \mathbb{E}[\hat{\Sigma}_n]^{-1} \right)} = \sigma \sqrt{\frac{2}{\pi}} \sqrt{\frac{d}{n}}.  \]
Since $g_c$ is non-decreasing and convex,
\begin{align*}
\sup_{\theta_*} \mathbb{E}_{D_n \sim P_{\theta_*}^{\otimes n}} \left[ \ell(\theta_*^T,\hat{\theta}^T(D_n)) \right] &\geq 
g_{c,\sigma}\left( \sigma \sqrt{\frac{2}{\pi}} \sqrt{\frac{d}{n}} \right) \\
&= g_c \left( \sigma \sqrt{1 + \frac{2d}{\pi n}} \right) - g_c(\sigma) \\
&\geq \sigma g_c'(\sigma) \left[\sqrt{1 + \frac{2d}{\pi n}} - 1 \right]. 
\end{align*}
By definition, $g_c(x) = E[\phi_c(xZ)]$, where $Z \sim \mathcal{N}(0,1)$, so
\[ \sigma g_c'(\sigma) = \sigma E[Z \phi_c'(\sigma Z)]
= \sigma E[\min(\sigma Z^2, c |Z|)]. \]
This proves the proposition.

\begin{lemma} \label{lem_lb_exp_norm_gauss}
Let $Y \sim \mathcal{N}(0,\Sigma)$ be a gaussian vector, where $\Sigma$ is positive definite.
Then 
\[ \mathbb{E}\left[ \Norm{Y}_2 \right] \geq \sqrt{\frac{2}{\pi}} \sqrt{Tr(\Sigma)}. \]
\end{lemma}
\begin{proof}
Let $Y_0 = \Sigma^{-\frac{1}{2}} Y \sim \mathcal{N}(0,I_d)$. Then
\begin{align*}
E\left[ \Norm{Y}_2 \right] &= E\left[ \Norm{\Sigma^{\frac{1}{2}} Y_0}_2 \right] \\
&= E \left[\sqrt{Y_0^T \Sigma Y_0} \right].
\end{align*}
Thus, the lemma is equivalent to
\[ E \left[\sqrt{Y_0^T \frac{\Sigma}{Tr(\Sigma)} Y_0} \right] \geq \sqrt{\frac{2}{\pi}}. \]
Let $\Sigma_0 = \frac{\Sigma}{Tr(\Sigma)}$.
Let $\Sigma_0 = Q^T D Q$, where $D$ is diagonal and $Q$ is orthogonal.
Let $\lambda_1,\ldots,\lambda_d$ be the diagonal coefficients of $D$ (that is to say, the eigenvalues of $\Sigma_0$).
Then 
\[ E \left[\sqrt{Y_0^T \Sigma_0 Y_0} \right] 
= E \left[\sqrt{ (QY_0)^T D (QY_0)} \right]. \]
As $Q$ is orthogonal, $Q Y_0 \sim \mathcal{N}(0,I_d)$, so
\[ E \left[\sqrt{Y_0^T \Sigma_0 Y_0} \right] = E \left[\sqrt{Y_0^T D Y_0} \right] = E \left[\sqrt{\sum_{i = 1}^d \lambda_i Y_{0i}^2} \right].   \]
The coefficients $\lambda_i$ are positive (since $\Sigma_0$ is positive definite) and sum to $1$ (since $Tr(\Sigma_0) = 1$ by construction).
It follows by Jensen's inequality that
\begin{align*}
E \left[\sqrt{Y_0^T \Sigma_0 Y_0} \right] &\geq E \left[\sum_{i = 1}^d \lambda_i |Y_{0i}| \right] \\
&= E [|Y_{01}|] \sum_{i = 1}^d \lambda_i \\
&=  E [|Y_{01}|] \\
&= \sqrt{\frac{2}{\pi}} \text{ since } Y_{01} \sim \mathcal{N}(0,1).
\end{align*}
This proves the lemma.
\end{proof}

\begin{lemma} \label{lem_conv_sqrt_tr_inv_mat}
The function $f: M \mapsto \sqrt{Tr(M^{-1})}$ is convex over
the convex cone of positive definite matries.
\end{lemma}

\begin{proof}
Let $M$ be a positive definite matrix. Let $H$ be a small, symmetric
perturbation. Then
\begin{align*}
(M + H)^{-1} &=  (I_d + M^{-1}H)^{-1} M^{-1} \\
&= \left(I_d - M^{-1}H + (M^{-1}H)^2 + o(\Norm{H}^2) \right) M^{-1} \\
&= M^{-1} - M^{-1}H M^{-1} + (M^{-1}H)^2 M^{-1} + o(\Norm{H}^2).
\end{align*}
Therefore,
\[ Tr((M + H)^{-1}) = Tr(M^{-1}) - Tr(M^{-1}H M^{-1}) + Tr((M^{-1}H)^2 M^{-1}) 
+ o(\Norm{H}^2). \]
For any positive real $a > 0$, $\sqrt{a + h} = \sqrt{a} + \frac{h}{2\sqrt{a}} - \frac{h^2}{8 a^{\frac{3}{2}}} + o(h^2)$.
It follows that
\begin{align*}
\sqrt{Tr((M + H)^{-1})} &= \sqrt{Tr(M^{-1})} - \frac{Tr(M^{-1}H M^{-1})}{2 \sqrt{Tr(M^{-1})}} \\ 
&\quad + \frac{Tr((M^{-1}H)^2M^{-1})}{2 \sqrt{Tr(M^{-1})}} 
- \frac{Tr(M^{-1}H M^{-1})^2}{8 Tr(M^{-1})^{\frac{3}{2}}} + o(\Norm{H}^2).
\numberthis \label{eq_dev_lim_sqrt_trace_inv}
\end{align*}
For any two matrices $A,B$, let $\langle A,B \rangle := Tr(M^{-\frac{1}{2}} A B^T M^{-\frac{1}{2}})$. It is easy to see that this defines a scalar product.
Thus, by the Cauchy-Schwarz inequality,
\begin{align*}
Tr(M^{-1}H M^{-1})^2 &= Tr(M^{-\frac{1}{2}} M^{-\frac{1}{2}} H M^{-\frac{1}{2}} M^{-\frac{1}{2}})^2 \\
&= \langle M^{-\frac{1}{2}} H M^{-\frac{1}{2}}, I_d \rangle^2 \\
&\leq  \langle I_d, I_d \rangle \langle M^{-\frac{1}{2}} H M^{-\frac{1}{2}}, M^{-\frac{1}{2}} H M^{-\frac{1}{2}} \rangle \\
&= Tr(M^{-1}) Tr \left( M^{-1} H M^{-1} H M^{-1} \right) \\
&= Tr(M^{-1})Tr((M^{-1}H)^2 M^{-1}).
\end{align*}
Thus,
\[
\frac{Tr((M^{-1}H)^2 M^{-1})}{2 \sqrt{Tr(M^{-1})}} 
- \frac{Tr(M^{-2}H)^2}{8 Tr(M^{-1})^{\frac{3}{2}}} \geq 
\frac{3}{8} \frac{Tr((M^{-1}H)^2 M^{-1})}{\sqrt{Tr(M^{-1})}} \geq 0. 
\]
By equation \eqref{eq_dev_lim_sqrt_trace_inv}, this proves that the Hessian of $f$ at $M$ is non-negative definite.
\end{proof}

\section{Proof of Theorem \ref{agcv_hub}} \label{app.sec_proof_thm}
The idea of the proof is to apply \cite[Theorem 17]{agghoo_rkhs} using suitable functions $(\hat{w}_{i,j})_{(i,j) \in \{1;2\}^2}$.

In this proof, we shall adopt the following notational conventions.
The notation $\mathbb{P}, \mathbb{E}$ will be reserved for probabilities and expectations which involve the sample $D_{n_t}$ (or $D_n$). 
For a (possibly random) function $f: \mathbb{R}^{d} \times \mathbb{R} \to \mathbb{R}$, $P(f) = P(f(X,Y))$ will denote the expectation taken with respect to $(X,Y) \sim P$ only (ignoring the potential randomness in the construction of $f$). The notation $E$ will be used for any other expectation.
Moreover, for any measurable function $t: \mathbb{R}^{d} \to \mathbb{R}$, we denote 
\begin{align*}
    \Norm{t}_{\alpha,P} &=  \Norm{t(X)}_{\alpha,P} ;= \Norm{t(X)}_{L^\alpha} \text{ where } (X,Y) \sim P \\
    \Norm{t}_{\psi_1,P} &=  \Norm{t(X)}_{\psi_1,P} ;= \Norm{t(X)}_{L^{\psi_1}} \text{ where } (X,Y) \sim P.
\end{align*}
For a random function $\hat{t}: \omega \mapsto (x \mapsto \hat{t}(\omega)(x))$, let 
\[ \Norm{\hat{t}}_{\alpha,P} = \Norm{\hat{t}(X)}_{\alpha,P} : \omega \mapsto \Norm{\hat{t}(\omega)}_{\alpha,P}, \]
with a similar definition for $\Norm{\hat{t}}_{\psi_1,P}$.

Fix a dataset $D_{n_t}$, $K \in \{1,\ldots, n_t \}$ and for any $k \in [|1;K|]^2$, 
% using definition \ref{def_lasso},
% let $\hat{\theta}_k = \hat{\theta}(\lambda_{\hat{m}_k^{last}})$ and 
let $\hat{t}_k = \learnrule_k(D_{n_t}): x \rightarrow \hat{q}_k(D_{n_t}) + \langle \hat{\theta}_k(D_{n_t}), x \rangle $. More precisely, to apply \cite[Theorem 17]{agghoo_rkhs}, one must show inequalities of the form $H(w_1,w_2, (\hat{t}_k)_{1 \leq k \leq K})$:
for all $r \geq 2$,
\begin{equation} \label{hypothesis_H}
\begin{split}
P \Bigl( \bigl\lvert \phi_c(\hat{t}_k(X) - Y)- \phi_c(\hat{t}_l(X) - Y) - c_{l}^k \bigr\rvert^r \Bigr)
&\leq r! \Bigl[ w_1\bigl(\sqrt{\loss{\hat{t}_k}}\bigr) + w_1(\sqrt{\loss{\hat{t}_l}}) \Bigr]^2 
\\
&\qquad \times \Bigl[ w_2\bigl(\sqrt{\loss{\hat{t}_k}} \bigr) + w_2 \bigl( \sqrt{\loss{\hat{t}_l}} \bigr) \Bigr]^{r-2} 
\enspace , 
\end{split}
\end{equation}
where $w_1,w_2$ are non-decreasing functions. Since $\phi_c$ is Lipschitz, it is enough to control $\Norm{\hat{t}_k - \hat{t}_l}_{\psi_1,P}$ and $\Norm{\hat{t}_k - \hat{t}_l}_{2,P}$ by functions of $\loss{\hat{t}_k}$ and $\loss{\hat{t}_l}$.

\subsection{A few lemmas}
\begin{lemma} \label{lem_int_fun_subexp}
Let $X$ be a non-negative random variable such that
\[ \forall x \in \mathbb{R}, P(X \geq x) \leq a e^{-x}, \]
where $a \geq 1$.
Let $g \in L^1(\mathbb{R}_+, e^{-x} dx)$ be an increasing,
differentiable function.
Then for all $b \in \mathbb{R}_+$,
\[ \mathbb{E} \left[ g(X) \mathbb{I}_{X \geq b} \right] \leq a \int_b^{+ \infty} e^{-v} g(v) dv . \]
\end{lemma}

\begin{proof}
\begin{align*}
   \mathbb{E} \left[ g(X) \mathbb{I}_{X \geq b} \right] &= \int_0^{+ \infty} P \left( g(X) \mathbb{I}_{X \geq b} \geq u \right) du \\
   &= g(b) P(X \geq b) + \int_{g(b)}^{+ \infty} P \left( g(X) \geq u \right) du \\
   &= g(b) P(X \geq b) + \int_{b}^{+ \infty} P \left( g(X) \geq g(v) \right) g'(v) dv \\
   &\leq g(b) P(X \geq b) + a \int_{b}^{+ \infty} e^{-v} g'(v) dv \text{ since } g \text{ increases} \\
   &\leq g(b) P(X \geq b) - a e^{-b} g(b) + a \int_b^{+ \infty} e^{-v} g(v) dv 
    \\
   &\leq a \int_b^{+ \infty} e^{-v} g(v) dv.
\end{align*}
\end{proof}

\begin{lemma} \label{lem_holder_L2_L1}
Let $Z$ be a random variable. Then for all $r > 2$,
\[ E [Z^2] \leq E [|Z|]^{\frac{r-2}{r-1}} 
E [|Z|^r]^{\frac{1}{r-1}}. \]
In particular, if $\Norm{Z}_{L^r} \leq \kappa_r \Norm{Z}_{L^2}$ for some
$r > 2, \kappa_r > 0$, 
then $\Norm{Z}_{L^2} \leq \kappa_r^{\frac{r}{r-2}} \Norm{Z}_{L^1}$.
\end{lemma}

\begin{proof}
Let $p = \frac{r-1}{r-2} > 1$, $\frac{1}{q} = 1 - \frac{1}{p}$, $\alpha = \frac{1}{p}$,
then by Hölder's inequality,
\begin{align*}
E [Z^2] &= E[|Z|^\alpha |Z|^{2 - \alpha} ] \\
&\leq E \left[|Z|^{p \alpha} \right]^{\frac{1}{p}} 
E \left[|Z|^{q(2 - \alpha)} \right]^{\frac{1}{q}}.
\end{align*}
Now by definition, $\frac{1}{p} = \frac{r-2}{r-1}$,  
$\frac{1}{q} = 1 - \frac{r-2}{r-1} = \frac{1}{r-1}$,
$p \alpha = p \times \frac{1}{p} = 1$ and
\begin{align*}
q(2- \alpha) &= \frac{2 - \frac{1}{p}}{1 - \frac{1}{p}} \\
&= \frac{2 - \frac{r-2}{r-1}}{1 - \frac{r-2}{r-1}} \\
&= \frac{2 (r-1) - (r-2)}{r-1 - (r-2)} \\
&= r.
\end{align*}
Assume now that $\Norm{Z}_{L^r} \leq \kappa_r \Norm{Z}_{L^2}$. Then
\begin{align*}
\Norm{Z}_{L^2}^2 &= E[Z^2] \\
&\leq E[|Z|]^{\frac{r-2}{r-1}}  E [|Z|^r]^{\frac{1}{r-1}} \\
&\leq \Norm{Z}_{L^1}^{\frac{r-2}{r-1}} \kappa_r^{\frac{r}{r-1}} 
\Norm{Z}_{L^2}^{\frac{r}{r-1}}.
\end{align*}
It follows that
\[ \Norm{Z}_{L^2}^{\frac{r-2}{r-1}} \leq \kappa_r^{\frac{r}{r-1}} \Norm{Z}_{L^1}^{\frac{r-2}{r-1}},  \]
which yields the result.
\end{proof}

\begin{lemma} \label{lem_ub_Lr_psi1}
Let $Z$ be a $\psi_1 -$ random variable.
Then for all $r \in \mathbb{N}$,
\[ \Norm{Z}_{L^r}^r \leq 2 r! \Norm{Z}_{L^{\psi_1}}^r \]
\end{lemma}

\begin{proof}
By definition of $\Norm{Z}_{L^{\psi_1}}$ and Markov's inequality
\[
\mathbb{P} \left( \frac{Z}{\Norm{Z}_{L^{\psi_1}}} \geq x \right)
\leq 2e^{-x}.
\]
It follows by lemma \ref{lem_int_fun_subexp} that
\begin{align*}
E \left[  \left( \frac{Z}{\Norm{Z}_{L^{\psi_1}}} \right)^r \right] 
&\leq 2 \int_0^{+ \infty} x^r e^{-x} dx \\
&\leq 2r! \text{(moment of an exponential distribution)}.
\end{align*}
%This leads to 
%\begin{align*}
%\Norm{X}_{L^r} &\leq (2r!)^{\frac{1}{r}} \Norm{X}_{\psi_1} \\
%&\leq 2r \Norm{X}_{\psi_1}.
%\end{align*}
\end{proof}

\begin{lemma} \label{lem_bernstein_psi1}
Let $Z$ be a $\psi_1-$random variable such that
$\Norm{Z}_{L^{\psi_1}} \leq \kappa \Norm{Z}_{L^2}$, where $\kappa \geq \sqrt{2}$.
Then for all integers $r \geq 2$,
\[ E \left[ Z^r \right] \leq r! E[Z^2] \left( (4 + 4\log \kappa) \Norm{Z}_{L^{\psi_1}} \right)^{r-2}. \]
\end{lemma}

\begin{proof}
Since $2 > 1$, the statement is true for $r = 2$.
Consider now $r \geq 3$.
Let $b > 1$ be a real number to be determined later.
Then 
\begin{align*}
E \left[ Z^r \right] &\leq E \left[ Z^r \mathbb{I}_{Z \leq b \Norm{Z}_{L^{\psi_1}}} \right] + E \left[ Z^r \mathbb{I}_{Z \geq b \Norm{Z}_{L^{\psi_1}}} \right] \\
&\leq b^{r-2} \Norm{Z}_{L^{\psi_1}}^{r-2} E[Z^2] + \Norm{Z}_{L^{\psi_1}}^{r} 
E \left[ \left(\frac{Z}{\Norm{Z}_{L^{\psi_1}}} \right)^r \mathbb{I}_{\frac{Z}{\Norm{Z}_{L^{\psi_1}}} \geq b} \right]. 
\end{align*}
By definition of $\Norm{Z}_{L^{\psi_1}}$ and  a Chernoff bound, the variable $Y = \frac{Z}{\Norm{Z}_{L^{\psi_1}}}$ satisfies 
$\mathbb{P}(Y \geq x) \leq 2e^{-x}$ for all $x$, therefore by lemma \ref{lem_int_fun_subexp},
\[ E \left[ Z^r \right] \leq 
b^{r-2} \Norm{Z}_{L^{\psi_1}}^{r-2} E[Z^2] + 2 \Norm{Z}_{L^{\psi_1}}^{r} \int_{b}^{+ \infty} t^r e^{-t} dt. \]
An easy induction argument shows that 
\begin{align*}
\int_{b}^{+ \infty} t^r e^{-t} dt &= \sum_{j = 0}^r \frac{r!}{j!} b^j e^{-b} \\
&= r! b^r e^{-b} \sum_{j = 0}^r \frac{1}{j! b^{r-j}}.  
\end{align*}
It follows that
\[ E \left[ Z^r \right] \leq 
b^{r-2} \Norm{Z}_{L^{\psi_1}}^{r-2} E[Z^2] + 2 \Norm{Z}_{L^{\psi_1}}^{r} r! b^r e^{-b} \sum_{j = 0}^r \frac{1}{j! b^{r-j}}. \]
Let $b = 4 + 4 \log \kappa \geq 4 + 2 \log 2$. Then for all $r \geq 3$,
\begin{align*}
\sum_{j = 0}^r \frac{1}{j! b^{r-j}} &= \frac{1}{b^r} + \frac{1}{b^{r-1}} 
+ \frac{1}{2b^{r-2}} + \sum_{j = 3}^r \frac{1}{j! b^{r-j}} \\
&\leq \frac{1}{b^3} + \frac{1}{b^2} 
+ \frac{1}{2b} + \frac{1}{6} + \frac{1}{(r \vee 4)!} + \frac{1}{b} \sum_{j = 4}^{+\infty} \frac{1}{j!} \\
&\leq \frac{1}{b^3} + \frac{1}{b^2} 
+ \frac{1}{2b} + \frac{1}{6} + \frac{1}{24} + \frac{1}{b}\left(e - 2 - \frac{1}{2} - \frac{1}{6} \right) \\
&< 0.36 . 
\end{align*}
As a result, for all $r \geq 3$,
\[ E \left[ Z^r \right] \leq 
\frac{r!}{3!} b^{r-2} \Norm{Z}_{L^{\psi_1}}^{r-2} E [Z^2] + 0.72 \Norm{Z}_{L^{\psi_1}}^{r} r! b^r e^{-b}. \] 
We now prove that for all $t \geq b$, $t \geq 2 \log t + 2 \log \kappa$.
For all $t \geq 4$, 
\[ \frac{d}{dt} [t - 2 \log t - 2 \log \kappa] 
= 1 - \frac{2}{t} \geq \frac{1}{2}, \]
therefore 
\begin{align*}
t - 2 \log t &\geq 4 - 2 \log(4) + \frac{t - 4}{2} \\
&\geq  \frac{t - 4}{2} .
\end{align*}
It follows that for all $t > 4 + 4\log(\kappa) = b$,
$t > 2 \log t + 2 \log(\kappa)$.
In particular, $b^2 e^{-b} \leq b^2 \exp(- 2 \log(b) - 2 \log(\kappa)) \leq \frac{1}{\kappa^2}$,
therefore 
\begin{align*}
E \left[ Z^r \right] &\leq 
\frac{r!}{6} b^{r-2} \Norm{Z}_{L^{\psi_1}}^{r-2} E[Z^2] + 0.72 \Norm{Z}_{L^{\psi_1}}^{r} r! b^{r-2} \frac{1}{\kappa^2} \\
&\leq \frac{r!}{6} b^{r-2} \Norm{Z}_{L^{\psi_1}}^{r-2} E[Z^2] + 0.72 \Norm{Z}_{L^{\psi_1}}^{r} r! b^{r-2} \frac{E[Z^2]}{\Norm{Z}_{L^{\psi_1}}^2} \\
&\leq r! E[Z^2] (b \Norm{Z}_{L^{\psi_1}})^{r-2}. 
\end{align*}
\end{proof}

\begin{lemma} \label{lem_ub_l2_l1}
There exists a constant $\mu_0$ such that, for any
sub-exponential random variable $Z$ and any $\kappa \geq \sqrt{2}$,
\[ \Norm{Z}_{L^{\psi_1}} \leq \kappa \Norm{Z}_{L^2} 
\implies \Norm{Z}_{L^2} \leq \mu_0 \kappa \log \kappa \Norm{Z}_{L^1}.   \]
\end{lemma}
\begin{proof} 
By lemmas \ref{lem_ub_Lr_psi1} and \ref{lem_holder_L2_L1},
for all $r \geq 3$,
\[ \Norm{Z}_{L^2} \leq (2^{\frac{1}{r}} \kappa r)^{\frac{r}{r-2}} \Norm{Z}_{L^1}.\]
Remark that
\begin{align*}
(2^{\frac{1}{r}}\kappa r)^{\frac{r}{r-2}} &= 2^{\frac{1}{r-2}} \kappa^{\frac{r}{r-2}} \times r \times 
r^{\frac{2}{r-2}}
\end{align*}
and 
\[ \frac{d}{dr} \log \left( r^{\frac{2}{r-2}} \right) = \frac{d}{dr} \bigl[ \frac{2 \log r}{r-2} \bigr] = \frac{2}{r-2} \bigl[\frac{1}{r} - \frac{\log r}{r-2} \bigr] \leq 0 \]
for $r \geq 3$ since $\log r \geq 1$ and $\frac{1}{r-2} \geq \frac{1}{r}$.
Let $r = 3 + \log(\kappa) \geq 3$. Thus, $r^{\frac{2}{r-2}} \leq 3^{\frac{2}{3 - 2}} = 9$ and  
\begin{align*}
(2^{\frac{1}{r}} \kappa r)^{\frac{r}{r-2}} &\leq 2 \times 9 \times  r \kappa^{\frac{r}{r-2}} \\
&\leq 18(3 + \log(\kappa))  \kappa \times \kappa^{\frac{2}{1 + \log( \kappa)}}
\\
&\leq 18(3 + \log(\kappa))  \kappa \exp \left( \frac{2 \log(\kappa)}{1 + \log(\kappa)} \right) \\
&\leq 18 e^2 (3 + \log(\kappa))  \kappa.
\end{align*}
The conclusion follows since by assumption, 
$\log \kappa \geq \log (\sqrt{2}) > 0$.
\end{proof}

\subsection{Controlling the \texorpdfstring{$\psi_1$}{Orlicz} norm \texorpdfstring{$\Norm{\hat{t}_k - \hat{t}_l}_{\psi_1,P}$}{of tk - tl}}

First, let us bound the supremum norm by the $L^2$ norm.

\begin{claim} \label{claim_l2_linf}
 For any $k \in \{1,\ldots,K\}$, recall that $\hat{t}_k = \learnrule_k(D_{n_t})$. Then:
 \[\forall (k,l) \in \{1,\ldots,K \}^2, \Norm{\hat{t}_k - \hat{t}_l}_{\psi_1,P} \leq \sqrt{2} \kappa(K) \Norm{\hat{t}_k - \hat{t}_l}_{2,P} \text{ a.s. }.\]
%  Hence, $\hat{\kappa}(K) \leq \frac{\sqrt{2}R(K)}{\sqrt{\mu(K)}}$ a.s, where $\hat{\kappa}$ is given by Definition \ref{norm_cst}.
\end{claim}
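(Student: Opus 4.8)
The plan is to reduce the difference $\hat t_k - \hat t_l$ to its fluctuating part around the mean of $X$, so that the definition of $\kappa(K)$ can be invoked directly. I would work conditionally on $D_{n_t}$, on the almost sure event where condition 1 of Hypothesis \eqref{hyp_sparse_reg} holds; there $\hat t_k$ and $\hat t_l$ are fixed affine functions of $x$ and $\Norm{\hat\theta_k(D_{n_t})}_0,\Norm{\hat\theta_l(D_{n_t})}_0\leq K$. Writing $\Delta\theta=\hat\theta_k(D_{n_t})-\hat\theta_l(D_{n_t})$ and $a=\hat q_k(D_{n_t})-\hat q_l(D_{n_t})+\langle\Delta\theta,\mathbb{E}[X]\rangle$, one gets the decomposition $\hat t_k(X)-\hat t_l(X)=a+\langle\Delta\theta,\bar X\rangle$, in which the second summand is centered. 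The point of this rewriting is that the intercept can only contribute a constant, which the $L^\infty$ and $L^2$ norms treat identically.

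First I would evaluate the two norms in these terms. Since $\mathbb{E}[\langle\Delta\theta,\bar X\rangle]=0$, orthogonality gives $\Norm{\hat t_k-\hat t_l}_{L^2(X)}^2=a^2+\Norm{\langle\bar X,\Delta\theta\rangle}_{L^2}^2$, while the triangle inequality for the essential supremum gives $\Norm{\hat t_k-\hat t_l}_{L^\infty(X)}\leq|a|+\Norm{\langle\bar X,\Delta\theta\rangle}_{L^\infty}$. Next, since $\Norm{\Delta\theta}_0\leq\Norm{\hat\theta_k}_0+\Norm{\hat\theta_l}_0\leq 2K$, the vector $\Delta\theta$ lies in the range of the supremum defining $\kappa(K)$ in \eqref{def_kappa}, so that $\Norm{\langle\bar X,\Delta\theta\rangle}_{L^\infty}\leq\kappa(K)\,\Norm{\langle\bar X,\Delta\theta\rangle}_{L^2}$ whenever $\Delta\theta\neq 0$ (the case $\Delta\theta=0$ being immediate, as both norms then equal $|a|$ and $\kappa(K)\geq 1$).

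Setting $b=\Norm{\langle\bar X,\Delta\theta\rangle}_{L^2}$ and using $\kappa(K)\geq 1$ together with the Cauchy--Schwarz inequality applied to $(|a|,b)$, I would conclude
\[\Norm{\hat t_k-\hat t_l}_{L^\infty(X)}\leq|a|+\kappa(K)\,b\leq\kappa(K)\bigl(|a|+b\bigr)\leq\sqrt{2}\,\kappa(K)\sqrt{a^2+b^2}=\sqrt{2}\,\kappa(K)\,\Norm{\hat t_k-\hat t_l}_{L^2(X)},\]
which is the claim. There is no genuine obstacle here: the only points needing care are observing that $\kappa(K)\geq 1$ (so the constant $\sqrt 2\,\kappa(K)$ absorbs the non-fluctuating part $a$) and treating $\Delta\theta=0$ separately; the reduction to $\bar X$ is exactly what makes the unpenalized intercept harmless and aligns the sparsity $\Norm{\Delta\theta}_0\leq 2K$ with the index set in the definition of $\kappa(K)$.
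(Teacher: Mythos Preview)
Your proof is correct and follows essentially the same route as the paper: decompose $\hat t_k-\hat t_l$ into a constant part plus $\langle\Delta\theta,\bar X\rangle$, bound the $L^\infty$ norm by the triangle inequality, apply the definition of $\kappa(K)$ via the sparsity bound $\Norm{\Delta\theta}_0\leq 2K$, use $\kappa(K)\geq 1$, and finish with $|a|+b\leq\sqrt{2}\sqrt{a^2+b^2}$ together with the orthogonal decomposition of the $L^2$ norm. Your explicit handling of the case $\Delta\theta=0$ is a minor clarification the paper leaves implicit.
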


\begin{proof}
Let $X$ be independent from $D_n$ and observe that for any $k$,
\[ \hat{t}_k(X) = \hat{b}_k + \hat{\theta}_k^T (X - P X), \]
%where $\hat{b}_k = \hat{q}(\lambda_{\hat{m}_k^{last}}) + \hat{\theta}_k^T E X$ 
where $\hat{b}_k = \hat{q}_k + \hat{\theta}_k^T (P X)$
 (using the notations of hypothesis \ref{hyp_sparse_reg}).
 Note that $ \Norm{1}_{\psi_1,P} = \frac{1}{\log 2}$. Hence, by the triangle inequality,
 \[ \Norm{\hat{t}_k(X) - \hat{t}_l(X)}_{\psi_1,P} \leq \frac{1}{\log 2} |\hat{b}_k - \hat{b}_l| + \Norm{(\hat{\theta}_k - \hat{\theta}_l)^T (X - P X)}_{\psi_1,P}. \]
 By hypothesis \ref{hyp_sparse_reg},
$\bigl\| \hat{\theta}_k \bigr\|_0  \leq k$.
%By definition \ref{def_param_lasso},
% $\bigl\| \hat{\theta}_k \bigr\|_0 = \bigl\| \hat{\theta}(\lambda_{\hat{m}_k^{last}}) \bigr\|_0 = k$.
Thus, if $K \geq \max(k,l)$, $\Norm{\hat{\theta}_k - \hat{\theta}_l}_0 \leq k + l \leq 2K$. The definition of $\kappa$ (equation \eqref{def_kappa}) implies that 
\begin{align*}
 \Norm{\hat{t}_k(X) - \hat{t}_l(X)}_{\psi_1,P} 
 &\leq \frac{1}{\log 2} |\hat{b}_k - \hat{b}_l| + \kappa(K) \Norm{(\hat{\theta}_k - \hat{\theta}_l)^T (X - P X)}_{L^2} \\
 &\leq \kappa(K) \left[ |\hat{b}_k - \hat{b}_l| + \Norm{(\hat{\theta}_k - \hat{\theta}_l)^T (X - P X)}_{L^2} \right] \\
%&\quad \left( \kappa(K) \geq \frac{1}{\log 2} \text{ by definition}
%  \right) \\ 
 &\leq \sqrt{2} \kappa(K) \sqrt{|\hat{b}_k - \hat{b}_l|^2 + \Norm{(\hat{\theta}_k - \hat{\theta}_l)^T (X - P X)}^2_{L^2} } \\
 &= \sqrt{2} \kappa(K) \Norm{\hat{t}_k(X) - \hat{t}_l(X)}_{L^2}.
\end{align*}
% Thus by definition \ref{norm_cst}, 
% \begin{align*}
% \hat{\kappa}(K) &\leq \frac{\sqrt{2} R(K)}{\sqrt{\mu(K)}}.
% \end{align*}
\end{proof}

A uniform bound on the Orlicz norm is also required. 

\begin{Definition} \label{norm_cst}
Let 
% \[ \hat{\kappa}(K) = \max_{(k,l) \in [|1;K|]^2} \frac{\Norm{\hat{t}_k - \hat{t}_l}_{\psi_1(P)}}{\Norm{\hat{t}_k - \hat{t}_l}_{L^2(P)}}. \]
% and 
\[ \hat{\beta} = \max_{1 \leq k,l \leq K} \Norm{\hat{t}_k - \hat{t}_l}_{\psi_1,P}. \]
\end{Definition}

$\mathbb{E}[\hat{\beta}]$ can be bounded as follows.

\begin{claim} \label{beta_bound}
%  For any $\lambda > 0$, 
%  the Lasso parameter $\hat{\theta}(\lambda)$ given by definition \ref{def_lasso} is such that:
%  \begin{equation} \label{unif_bound_lasso}
%   \bigl\| \hat{\theta}(\lambda) \bigr\|_1 \leq \frac{c}{\lambda n_t} \inf_q \sum_{i = 1}^{n_t} |Y_i - q|
%  \end{equation}
%  As a consequence, 
Assume that hypotheses \textbf{(Reg-}$\cT$ \textbf{)}, \textbf{(Uub)} hold and that for some $\lambda > 0$, $\kappa(K) \log(\kappa(K)) \leq \lambda \sqrt{n_t}$.
Then
 \[ \mathbb{E} [\hat{\beta}] \leq \left( \frac{2}{\log 2} + \frac{2\mu_0 \lambda^2}{|\log \log 2|} \right) L  n_t^{1 + \alpha}.\]
 %One can take $\mu_1 = \frac{8}{\log 2} + 2\mu_0$, where $\mu_0$ 
 %is the same as in lemma \ref{lem_ub_l2_l1}.
\end{claim}

\begin{proof}
Let $(k,l) \in \{1,\ldots,K\}^2$.
Defining $\widetilde{X}_i = X_i - \frac{1}{n_t} \sum_{i = 1}^{n_t} X_i$ and changing variables in hypothesis \ref{hyp_sparse_reg} from $(q,\theta)$ to $\left( b = q + <\theta, \frac{1}{n_t} \sum_{i=1}^{n_t} X_i>, \theta \right)$, we can rewrite $\hat{t}_k$ as 
\[\hat{t}_k(x) = \hat{b}_k(D_{n_t}) + \hat{\theta}_k(D_{n_t})^T \left( x -  \frac{1}{n_t} \sum_{i = 1}^{n_t} X_i\right)\] where
% \begin{align*}
%  (\hat{b}_k, \hat{\theta}_k) &\in \argmin_{(b,\theta) \in \hat{\mathcal{C}}(\lambda_{\hat{m}_k^{last}})} |b| \\
%  \hat{\mathcal{C}}(\lambda) &= \argmin_{(b,\theta) \in \mathbb{R} \times \mathbb{R}^p} \frac{1}{n} \sum_{i = 1}^n \phi \bigl( Y_i - b - \theta^T \bar{X}_i \bigr) + \lambda \Norm{\theta}_1
% \end{align*}
\begin{align*}
 \hat{b}_k(D_{n_t}) &\in \argmin_{b \in \hat{Q}'\left( D_{n_t},\hat{\theta}_k(D_{n_t}) \right)} |b| \\
 \hat{Q}'(D_{n_t},\theta) &= \argmin_{b \in \mathbb{R}} \frac{1}{n_t} \sum_{i = 1}^{n_t} \phi_c \left( Y_i - b - \theta^T \widetilde{X}_i \right).
\end{align*}
Therefore, differentiating with respect to $b$,
\[ \frac{1}{n_t} \sum_{i = 1}^{n_t} \phi_c' \bigl( Y_i - \hat{b}_k - \hat{\theta}_k^T \widetilde{X}_i \bigr) = 0. \]
Assume by contradiction that
\begin{equation} \label{eq_size_b}
 \exists b > 0, \forall i \in [|1;n_t |], \hat{b}_k + b + \hat{\theta}_k^T \widetilde{X}_i \leq \hat{b}_l  + \hat{\theta}_l^T \widetilde{X}_i.
\end{equation}
Let $b$ be such that \eqref{eq_size_b} holds. 
Then by monotony of $\phi_c'$, for all $\varepsilon$ in $ [0;\frac{b}{2}]$,
\begin{align*}
 0 &= \frac{1}{n_t} \sum_{i = 1}^{n_t} \phi_c' \bigl( Y_i - \hat{b}_k - \hat{\theta}_k^T \widetilde{X}_i \bigr) \\
 &\geq 
 \frac{1}{n_t} \sum_{i = 1}^{n_t} \phi_c' \bigl( Y_i - \hat{b}_k - \varepsilon - \hat{\theta}_k^T \widetilde{X}_i \bigr) \\
 &\geq \frac{1}{n_t} \sum_{i = 1}^{n_t} \phi_c' \bigl( Y_i - \hat{b}_k - \frac{b}{2} - \hat{\theta}_k^T \widetilde{X}_i \bigr) \\
 &\geq \frac{1}{n_t} \sum_{i = 1}^{n_t} \phi_c' \bigl( Y_i - \hat{b}_l + \frac{b}{2} - \hat{\theta}_l^T \widetilde{X}_i \bigr) \\
 &\geq \frac{1}{n_t} \sum_{i = 1}^{n_t} \phi_c' \bigl( Y_i - \hat{b}_l + \varepsilon - \hat{\theta}_l^T \widetilde{X}_i \bigr) \\
 &\geq \frac{1}{n_t} \sum_{i = 1}^{n_t} \phi_c' \bigl( Y_i - \hat{b}_l - \hat{\theta}_l^T \widetilde{X}_i \bigr) \\
 &= 0.
\end{align*}
It follows that
\begin{equation}
 \forall \varepsilon \in [0;\frac{b}{2}],
 \frac{1}{n_t} \sum_{i = 1}^{n_t} \phi_c' \bigl( Y_i - \hat{b}_k - \varepsilon - \hat{\theta}_k^T \widetilde{X}_i \bigr) = \frac{1}{n_t} \sum_{i = 1}^{n_t} \phi_c' \bigl( Y_i - \hat{b}_l + \varepsilon - \hat{\theta}_l^T \widetilde{X}_i \bigr) = 0.
\end{equation}
By integration, this implies that for all $\varepsilon \in [0;\frac{b}{2}]$,
% \begin{align}
%  (\hat{\theta}_k, \hat{b}_k + \varepsilon) &\in \hat{\mathcal{C}}(\lambda_{\hat{m}_k^{last}}) \label{b_eq_1}\\
%  (\hat{\theta}_l, \hat{b}_l - \varepsilon) &\in \hat{\mathcal{C}}(\lambda_{\hat{m}_l^{last}}) \label{b_eq_2}
% \end{align}
\begin{align}
 (\hat{b}_k + \varepsilon) &\in \hat{Q}'\left( D_{n_t},\hat{\theta}_k(D_{n_t}) \right) \label{b_eq_1},\\
 (\hat{b}_l - \varepsilon) &\in \hat{Q}' \left( D_{n_t},\hat{\theta}_l(D_{n_t}) \right). \label{b_eq_2}
\end{align}

If $\hat{b}_l > 0$, then for small enough $\varepsilon$, \eqref{b_eq_2} contradicts the minimality of $|\hat{b}_l|$. On the other hand, if $\hat{b}_l \leq 0$, then averaging \eqref{eq_size_b} over $i \in \{1,\ldots, n\}$ yields
\[  \hat{b}_k + b \leq \hat{b}_l \leq 0. \]
Then for $\varepsilon \in [0; \frac{b}{2}]$, \eqref{b_eq_1} contradicts the minimality of $|\hat{b}_k|$.
Thus, \eqref{eq_size_b} leads to a contradiction. Let $i$ be such that $\hat{b}_k + \hat{\theta}_k^T \widetilde{X}_i \geq \hat{b}_l + \hat{\theta}_l^T \widetilde{X}_i$. Then
\[\hat{b}_l - \hat{b}_k \leq \bigl( \hat{\theta}_k - \hat{\theta}_l \bigr)^T \widetilde{X}_i \leq \max_{i = 1,\ldots, n_t} \bigl|(\hat{\theta}_k - \hat{\theta}_l \bigr)^T \widetilde{X}_i \bigr| .\]
Exchanging $k$ and $l$ yields
\[ |\hat{b}_l - \hat{b}_k| \leq \max_{1 \leq i \leq n_t} \bigl|(\hat{\theta}_k - \hat{\theta}_l \bigr)^T \widetilde{X}_i \bigr| \leq 
2 \max_{1 \leq i \leq n_t} \bigl|(\hat{\theta}_k - \hat{\theta}_l \bigr)^T (X_i - PX) \bigr| . \]
Let $X \sim X_1$ be independent from $D_{n_t}$.
For any $k,l$,
\begin{align*}
 |(\hat{t}_k - \hat{t}_l)(X)| &\leq |\hat{b}_l - \hat{b}_k| +  \bigl|(\hat{\theta}_k - \hat{\theta}_l)^T (PX - \frac{1}{n_t} \sum_{i = 1}^{n_t} X_i)  \bigr| + \bigl|(\hat{\theta}_k - \hat{\theta}_l)^T (X - P X)  \bigr| \\
 &\leq 3 \max_{1 \leq i \leq n_t} \left| (\hat{\theta}_k - \hat{\theta}_l)^T (X_i - P X) \right| + \bigl|(\hat{\theta}_k - \hat{\theta}_l)^T (X - P X)  \bigr|.
% &\leq 2\sup_{x \in \text{supp}(X)} \bigl|(\hat{\theta}_k - \hat{\theta}_l)^T (x - \frac{1}{n_t} \sum_{i = 1}^{n_t} X_i)  \bigr| \\
% &\leq 2\bigl\| \hat{\theta}_k - \hat{\theta}_l \bigr\|_1 \sup_{(x,y) \in \text{supp}(X)^2} \NormInfinity{x - y} \\
% &\leq  4\bigl\| \hat{\theta}_k - \hat{\theta}_l \bigr\|_1 \sup_{x \in \text{supp}(X)} \NormInfinity{x - P X} \\
% &\leq 8 \sup_{1 \leq k \leq n_t} \Norm{\hat{\theta}_k} \sup_{x \in \text{supp}(X)} \NormInfinity{x - P X}.
\end{align*}
As $X$ is independent from $D_{n_t}$, conditionnally on $D_{n_t}$, by hypothesis 2,
\begin{align*}
\Norm{\hat{t}_k - \hat{t}_l}_{\psi_1,P} &\leq \frac{3}{\log 2} 
\max_{1 \leq i \leq n_t} \left| (\hat{\theta}_k - \hat{\theta}_l)^T (X_i - P X) \right| + \Norm{(\hat{\theta}_k - \hat{\theta}_l)^T (X - P X)}_{\psi_1,P} \\
&\leq \frac{3}{\log 2} 
\max_{1 \leq i \leq n_t} \left| (\hat{\theta}_k - \hat{\theta}_l)^T (X_i - P X) \right| + \kappa(K) P \left( \langle \hat{\theta}_k - \hat{\theta}_l , X - P X \rangle^2 \right)^{\frac{1}{2}}
\end{align*}
Hence, by lemma \ref{lem_ub_l2_l1},
\begin{align*}
\Norm{\hat{t}_k - \hat{t}_l}_{\psi_1,P} &\leq 
\frac{3}{\log 2} 
\max_{1 \leq i \leq n_t} \left| (\hat{\theta}_k - \hat{\theta}_l)^T (X_i - P X) \right| \\ 
&\quad + \mu_0 \kappa(K)^2 \log(\kappa(K)) P \left( \bigl| \langle \hat{\theta}_k - \hat{\theta}_l , X - P X \rangle \bigr| \right) \\
&\leq 
\frac{3}{\log 2} 
\max_{1 \leq i \leq n_t} \left| (\hat{\theta}_k - \hat{\theta}_l)^T (X_i - P X) \right| \\ 
&\quad 
+ \mu_0 \kappa(K)^2 \frac{\log^2(\kappa(K))}{|\log \log 2|} P \left( \bigl| \langle \hat{\theta}_k - \hat{\theta}_l , X - P X \rangle \bigr| \right) . 
\end{align*}
Thus, by the hypotheses of claim \ref{beta_bound},
\[ \mathbb{E} \left[ \hat{\beta}  \right] 
\leq \frac{6L}{\log 2} n_t^\alpha + \frac{2\mu_0 \lambda^2}{|\log \log 2|} L n_t^{1 + \alpha}.  \]
The result follows since for all $n_t \geq 3$,
$\frac{6L}{\log 2} n_t^\alpha \leq \frac{2L}{\log 2} n_t^{1 + \alpha}$.
%\todo{A clarifier}
%Thus, by definition \ref{norm_cst}, $\hat{\beta} \leq 8 \sup_{1 \leq k \leq n_t} \Norm{\hat{\theta}_k} \sup_{x \in \text{supp}(\bar{X})} \NormInfinity{x}. $

%Hence, by hypothesis \ref{hyp_sparse_reg},
%\[ \mathbb{E} [\hat{\beta}] \leq 8 L R n^\alpha. \]
% By Fatou's lemma,
% \[ \mathbb{E}[\hat{\beta}] \leq 8c R(1) n_t^2 \inf_q \mathbb{E}[|Y-q|]. \]
\end{proof}

\subsection{Proving hypotheses \texorpdfstring{$H \left(\hat{w}_{i,1},\hat{w}_{i,2}, (\hat{t}_k)_{1 \leq k \leq K} \right)$}{H(wi1,wi2,(tk))}}
The following lemmas will be useful. 

\begin{lemma} \label{lem_max}
For any $(u,v,a,b) \in \mathbb{R}_+^4$,
\[ \max(u(a+b),v(a+b)^2) \leq 
\left( \max(\sqrt{ua},\sqrt{v} a) + \max(\sqrt{ub},\sqrt{v} b) \right)^2. \]
\end{lemma}

\begin{proof}
\begin{align*}
\left( \max(\sqrt{ua},\sqrt{v} a) + \max(\sqrt{ub},\sqrt{v} b) \right)^2 
&= \max(ua,va^2) + \max(ub,vb^2) \\ 
&\quad + 2 \max(\sqrt{ua},\sqrt{v} a) \max(\sqrt{ub},\sqrt{v} b) \\
&\geq  \max(u(a+b),v(a+b)^2).
\end{align*}
\end{proof}
 
\begin{claim} \label{lem_lb_loss}
Let $\xloss{u} = P \left[ \phi_c (Y - u) - \phi_c(Y) |X  \right]$.
Let $s(X) \in \argmin_{u \in \mathbb{R}} \xloss{u}$; $s$ is a risk minimizer.
Under hypothesis \textbf{(Lcs)}, almost surely, for any $u \in \mathbb{R}$,
\[ s(X) - \frac{c}{2} \leq u \leq s(X) + \frac{c}{2} \implies \frac{d^2}{du^2} \xloss{u} \geq \eta.  \]
As a result, for any $u \in \mathbb{R}$,
\begin{align*}
\xloss{u} - \xloss{s(X)} &\geq \frac{\eta}{2} (u - s(X))^2 \text{ if }
|u - s(X)| \leq \frac{c}{2} \\
&\geq \frac{\eta c}{4} |u - s(X)| \text{ if }
|u - s(X)| \geq \frac{c}{2}.
\end{align*}
\end{claim}  
 
 \begin{proof}
 Recall that
 \[ \phi_c(x) = \frac{x^2}{2} \mathbb{I}_{|x| \leq c} + c \bigl( |x| - \frac{c}{2} \bigr) \mathbb{I}_{|x| > c}. \]
%In the rest of the proof, for any $x \in \cX$, let $\xloss{u} = \mathbb{E} \left[ \phi_c (Y - u) - \phi_c(Y) |X = x \right]$. 
%Let $s : x \mapsto \argmin_{u \in \mathbb{R}} \xloss{u}$; $s$ is a risk minimizer.
Then $\phi_c'(x) = \text{sgn}(x)(|x| \wedge c)$ and $\phi_c''(x) = \mathbb{I}_{|x| \leq c}$. By differentiating under the expectation,
almost surely, for any $u$ such that $|u - \bayes(X)| \leq \frac{c}{2}$, 
\begin{align*} 
 \frac{d^2}{du^2} \xloss{u} &=
 \partial_u^2 P \left[ \phi_c(Y-u)  - \phi_c(Y) | X \right] \\
 &= P \left[ \phi_c''(Y-u) | X \right] \\
 &= P \left[ |Y - u| \leq c | X \right] \\
 &\geq P \left[ |Y - \bayes(X)| + |u - \bayes(X)| \leq c | X \right] \\
 &\geq P \left[ |Y - \bayes(X)| \leq \frac{c}{2} | X \right] \\
 &\geq \eta.
\end{align*}
This proves the first equation.
% Assume wlog that $u \geq \bayes(x)$.
% The function $\phi$ is convex and its derivative is Lipschitz. In particular, it has a second derivative in
% $L^\infty$.
%  When $|Y-u| \leq c$, the function $u \rightarrow L(\Norm{u - Y}_2)$ has Hessian
% $I_d$. Therefore,
% \[H_u(\xloss{u}) \geq \mathbb{P}(\Norm{Y - u}_2 \leq c) I_d \]
% in the sense of the usual partial order on symmetric positive matrices.
% If \[ \mathbb{P}(\Norm{Y - u}_2 \leq \frac{c}{2}) \geq \eta \] then by the triangle inequality,
Since $\bayes(X)$ is a global minimum, it follows that, for any $u \in \left[ \bayes(X) - \frac{c}{2}; \bayes(X) + \frac{c}{2} \right]$, 
\[
% |\xlossb{'}{u}| &= \bigl|\xlossb{'}{u} - \xlossb{'}{\bayes(X)} \bigr| \\
%&= \left| \int_{\bayes(X)}^u \xlossb{''}{t} dt \right| \\
% &\geq \eta |u - \bayes(X)| \\
\xloss{u} - \xloss{s(X)}  \geq \frac{\eta (u-\bayes(X))^2}{2}.\]
Because $\xloss{\cdot}$ is convex, for any $u$ such that $u \geq s(X) + \frac{c}{2}$,
\begin{align*}
\xloss{u} - \xloss{s(X)} &\geq (u - s(X)) \frac{\xloss{s(X) + \frac{c}{2}} - \xloss{s(X)}}{\frac{c}{2}} \\
&\geq (u - s(X)) \frac{2}{c} \frac{\eta c^2}{8} \\
&\geq \frac{\eta c}{4} (u - s(X)).
\end{align*}
Similarly, for $u < s(X) - \frac{c}{2}$, 
$\xloss{u} - \xloss{s(X)} \geq \frac{\eta c}{4} (s(X) - u)$.
This proves the lemma.
 \end{proof}
We now relate the $L^2$ norm to the excess risk in the following Proposition.

\begin{proposition} \label{prop_self_bound}
Let $(X,Y) \sim P$ be random variables. Let $\phi_c$ be the Huber loss with parameter $c > 0$.
% Assume that there exists $\eta > 0$ such that almost everywhere,
% \[ \mathbb{P} \left( |Y - s(X)| \leq \frac{c}{2} | X \right) \geq \eta. \]
Assume that $P$ satisfies hypothesis \textbf{(Lcs)}.
Let $(f_1,f_2) : \cX \rightarrow \mathbb{R}^2$ be measurable functions.
If for some $r > 2$, $\Norm{f_1 - f_2}_{r,P} \leq \kappa_r \Norm{f_1 - f_2}_{2,P}$, then
\[ \Norm{f_1 - f_2}_{2,P}^2 \leq \left(w_0(r, \kappa_r, \sqrt{\loss{f_1}}) + w_0(r, \kappa_r, \sqrt{\loss{f_2}}) \right)^2,  \]
where $w_0(r, \kappa_r,x) = \max \left( \frac{2\sqrt{2}}{\sqrt{\eta}} x,  
 \frac{8}{\eta c} 2^{\frac{r-1}{r-2}} \kappa_r^{\frac{r}{r-2}} x^2  \right)$.
 
In particular, there exists a constant $\mu_3$ such that, whenever $\Norm{f_1 - f_2}_{\psi_1,P} \leq \kappa \Norm{f_1 - f_2}_{2,P}$ for some $\kappa \geq 2$,
 \begin{equation} \label{w1_bound_hub}
 c^2 \Norm{f_1 - f_2}_{2,P}^2 \leq \left(w_1(\kappa, \sqrt{\loss{f_1}}) + 
 w_1(\kappa, \sqrt{\loss{f_2}}) \right)^2,  
\end{equation}
 where $w_1(\kappa,x) = \max \left( \frac{2\sqrt{2} c}{\sqrt{\eta}} x,  
 \frac{\mu_3}{\eta} \kappa \log(\kappa) x^2  \right)$.
 One can take $\mu_3 = 16 e^2 \times \frac{3 + \log 2}{\log 2} \times \sup_{u \geq 3} \exp \left( \frac{\log(u)}{u - 2} \right)$.
\end{proposition}

\begin{proof}
Let $f_1,f_2$ satisfy the hypotheses of proposition \ref{prop_self_bound}.
Let $U = f_1(X), V = f_2(X), S = s(X)$ where
\[ s(X) \in \argmin_{u \in \mathbb{R}} P \left[ \phi_c (Y - u) - \phi_c(Y) |X\right]. \]
Let 
\[ Z = P \left[ \phi_c(Y - U) + \phi_c(Y - V) - 2 \phi_c(Y - S) | X \right] . \]
Notice that in the notation of claim \ref{lem_lb_loss}, $Z = \xloss{U} + \xloss{V} - 2 \xloss{S}$ and in particular, $P[Z] = \loss{f_1} + \loss{f_2}$. 
Define the event $A = \{ |U - S| \leq \frac{c}{2}, |V - S| \leq \frac{c}{2} \}$. By claim \ref{lem_lb_loss},
\begin{align*}
(U - V)^2 \mathbb{I}_A 
&\leq 2 \left[ (U - S)^2 + (V-S)^2 \right] \mathbb{I}_{|U - S| \leq \frac{c}{2}} \mathbb{I}_{|V - S| \leq \frac{c}{2}} \\
&\leq \frac{4}{\eta} Z \mathbb{I}_A . \numberthis \label{eq_ub_var_near} 
\end{align*}
Let $r > 2$.
By lemma \ref{lem_holder_L2_L1},
\begin{align*}
P \left[(U - V)^2 \mathbb{I}_{A^c} \right] 
&\leq P \left[|U - V| \mathbb{I}_{A^c} \right]^{\frac{r-2}{r-1}} 
 P \left[|U - V|^r \mathbb{I}_{A^c} \right]^{\frac{1}{r-1}} \\ 
 &\leq P \left[|U - V| \mathbb{I}_{A^c} \right]^{\frac{r-2}{r-1}}
 P \left[|U - V|^r \right]^{\frac{1}{r-1}} \\
 &= P \left[|U - V| \mathbb{I}_{A^c} \right]^{\frac{r-2}{r-1}} 
 \Norm{f_1 - f_2}_{r,P}^{\frac{r}{r-1}} \\
 &\leq P \left[|U - V| \mathbb{I}_{A^c} \right]^{\frac{r-2}{r-1}}  
 \kappa_r^{\frac{r}{r-1}} \Norm{f_1 - f_2}_{2,P}^{\frac{r}{r-1}}.
\end{align*}
By definition, on $A^c$, $\max(|U - S|, |V - S|) \geq \frac{c}{2}$,
therefore by lemma \ref{lem_lb_loss},
\[|U - V| \mathbb{I}_{A^c} \leq 2 \max(|U - S|, |V - S|)\mathbb{I}_{A^c} 
\leq \frac{8}{\eta c} Z\mathbb{I}_{A^c}.\]
It follows that
\begin{equation} \label{eq_ub_var_far}
P \left[(U - V)^2 \mathbb{I}_{A^c} \right] \leq  
\left( \frac{8}{\eta c} \right)^{\frac{r-2}{r-1}} P[Z]^{\frac{r-2}{r-1}} \kappa_r^{\frac{r}{r-1}} \Norm{f_1 - f_2}_{2,P}^{\frac{r}{r-1}}. 
\end{equation}
From equations \eqref{eq_ub_var_near} and \eqref{eq_ub_var_far},
it follows that 
\begin{align*}
\Norm{f_1 - f_2}_{2,P}^2 &= P \left[(U - V)^2 \right] \\
&= P \left[(U - V)^2 \mathbb{I}_{A} \right] + P \left[(U - V)^2 \mathbb{I}_{A^c} \right] \\
&\leq  \frac{4}{\eta} P [ Z \mathbb{I}_A] + \left( \frac{8}{\eta c} \right)^{\frac{r-2}{r-1}} P[Z]^{\frac{r-2}{r-1}} \kappa_r^{\frac{r}{r-1}} \Norm{f_1 - f_2}_{2,P}^{\frac{r}{r-1}} \\
&\leq 2 \max \left( \frac{4}{\eta} P[Z],  
\left( \frac{8}{\eta c} \right)^{\frac{r-2}{r-1}} P[Z]^{\frac{r-2}{r-1}} \kappa_r^{\frac{r}{r-1}} \Norm{f_1 - f_2}_{2,P}^{\frac{r}{r-1}} \right).
\end{align*}
Therefore, either $\Norm{f_1 - f_2}_{2,P}^2 \leq 
\frac{8}{\eta} \left[ \loss{f_1} + \loss{f_2} \right] $
or
\begin{align*}
&\Norm{f_1 - f_2}_{2,P}^2  \leq 2
\left( \frac{8}{\eta c} \right)^{\frac{r-2}{r-1}} P[Z]^{\frac{r-2}{r-1}} \kappa_r^{\frac{r}{r-1}} \Norm{f_1 - f_2}_{2,P}^{\frac{r}{r-1}} \\
&\quad \iff \Norm{f_1 - f_2}_{2,P}^{\frac{r-2}{r-1}}  \leq 
2\kappa_r^{\frac{r}{r-1}} \left( \frac{8}{\eta c} P[Z] \right)^{\frac{r-2}{r-1}} \\
&\quad \iff \Norm{f_1 - f_2}_{2,P}^2 \leq 4^{\frac{r-1}{r-2}}
\kappa_r^{\frac{2r}{r-2}} \left( \frac{8}{\eta c} P[Z] \right)^2 \\
&\quad \iff \Norm{f_1 - f_2}_{2,P}^2 \leq 4^{\frac{r-1}{r-2}} \kappa_r^{\frac{2r}{r-2}} 
\left( \frac{8}{\eta c}\right)^2 \left[ \loss{f_1} + \loss{f_2} \right]^2 
\end{align*}
In either case,
\[ \Norm{f_1 - f_2}_{2,P}^2 \leq \max \left( \frac{8}{\eta} \left[ \loss{f_1} + \loss{f_2} \right], 4^{\frac{r-1}{r-2}} \kappa_r^{\frac{2r}{r-2}} 
\left( \frac{8}{\eta c}\right)^2 \left[ \loss{f_1} + \loss{f_2} \right]^2  \right). \]
Finally, by lemma \ref{lem_max},
\begin{equation} \label{eq_ub_L2_w0}
\Norm{f_1 - f_2}_{2,P}^2 \leq \left(w_0(r, \kappa_r, \sqrt{\loss{f_1}}) + 
 w_0(r, \kappa_r, \sqrt{\loss{f_2}}) \right)^2,
\end{equation}
 where $w_0(r,\kappa_r,x) = \max \left( \frac{2\sqrt{2}}{\sqrt{\eta}} x,  
 \frac{8}{\eta c} 2^{\frac{r-1}{r-2}} \kappa_r^{\frac{r}{r-2}} x^2  \right)$.
 This proves the first equation.
Let now $r = 3 + \log(\kappa) \geq 3$. By lemma \ref{lem_ub_Lr_psi1},
\begin{align*}
2^{\frac{r-1}{r-2}} \kappa_r^{\frac{r}{r-2}} &\leq 2^{\frac{r-1}{r-2}} 2^{\frac{1}{r-2}} r^{\frac{r}{r-2}} \kappa^{\frac{r}{r-2}} \\
&\leq 8 r \kappa r^{\frac{2}{r-2}} \kappa^{\frac{2}{r-2}} \\
&\leq 8 (3 + \log \kappa) \kappa r^{\frac{2}{r-2}} 
\exp \left( \log(\kappa) \frac{2}{1 + \log \kappa} \right) \\
&\leq  72 (3 + \log \kappa) \kappa  e^2
\end{align*}
since $r \mapsto r^{\frac{2}{r-2}}$ decreases on $[3; + \infty[$, as shown in the proof of lemma \ref{lem_ub_l2_l1}.
Let $\mu_3 = 576 e^2 \times \left(1 + \frac{3}{|\log \log 2|} \right).$
Then for all $\kappa \geq 2$,
\[ 2^{\frac{r-1}{r-2}} \kappa_r^{\frac{r}{r-2}} \leq \frac{\mu_3}{8} \kappa \log(\kappa). \]
It follows from equation \eqref{eq_ub_L2_w0} that
\[c^2 \Norm{f_1 - f_2}_{2,P}^2 \leq \left(w_1(\kappa, \sqrt{\loss{f_1}}) + 
 w_1(\kappa, \sqrt{\loss{f_2}}) \right)^2,  \]
where $w_1(\kappa,x) = \max \left( \frac{2 \sqrt{2} c}{\sqrt{\eta}} x,  
 \frac{\mu_3}{\eta} \kappa \log(\kappa) x^2  \right)$.
\end{proof}

We are now ready to obtain functions $(\hat{w}_{i,j})_{(i,j) \in \{1;2 \}^2}$ such that $H \left(\hat{w}_{i,1},\hat{w}_{i,2},(\hat{t}_k)_{1 \leq k \leq K} \right)$ holds.
In the following, fix $K \in [|1;n_t |]$ and write $\kappa = \kappa(K)$ for short.
Because the Huber loss $\phi_c$ is $c-$Lipschitz,
\[ \forall u,v \in \mathbb{R}, \left|\phi_c(Y - u) - \phi_c(Y - v) \right| \leq c |u-v|. \]
Therefore, for all $r \geq 2$,
\[ P \left[\bigl(\phi_c(Y - \hat{t}_k(X)) - \phi_c(Y - \hat{t}_l(X)) \bigr)^r \right] \leq c^r P \left( \bigl|\hat{t}_k(X) - \hat{t}_l(X)  \bigr|^r \right). \]
Let $\mu_4 = \frac{4}{\log 2} + 4$.
By claim \ref{claim_l2_linf}, 
$\Norm{\hat{t}_k - \hat{t}_l}_{\psi_1,P} \leq \sqrt{2} \kappa \Norm{\hat{t}_k - \hat{t}_l}_{2,P}$, hence
by lemma \ref{lem_bernstein_psi1}, since $\kappa \geq \frac{1}{\log 2} \geq \sqrt{2}$,
%Therefore by \eqref{ineq_w21_hub} and \eqref{ineq_w22_hub},
\begin{align*}
 &P\left[\bigl(\phi_c(Y - \hat{t}_k(X)) - \phi_c(Y - \hat{t}_l(X)) \bigr)^r \right] \\
&\leq r! \left(c^2 \Norm{\hat{t}_k - \hat{t}_l}^2_{2,P} \right) \left(\mu_4 c \log(\sqrt{2} \kappa) \Norm{\hat{t}_k - \hat{t}_l}_{\psi_1,P}\right)^{r-2}  \\ 
&\leq r! \left(c^2 \Norm{\hat{t}_k - \hat{t}_l}^2_{2,P} \right) \left(\mu_4 c \log(\sqrt{2} \kappa) \sqrt{2} \kappa \Norm{\hat{t}_k - \hat{t}_l}_{2,P}\right)^{r-2} .
\end{align*}
 Using the notation of Proposition \ref{prop_self_bound}, let 
\begin{equation} \label{def_wA_hub}
w_A(x) = w_1(\sqrt{2} \kappa(K),x) 
= \max \left( \frac{2 \sqrt{2} c}{\sqrt{\eta}} x,  
 \frac{\mu_3}{\eta} \sqrt{2} \kappa \log(\sqrt{2} \kappa) x^2  \right).
\end{equation}  
 By Proposition \ref{prop_self_bound},
\begin{align*}
&P\left[\bigl(\phi_c(Y - \hat{t}_k(X)) - \phi_c(Y - \hat{t}_l(X)) \bigr)^r \right] \\
&\leq \left(w_A(\sqrt{\loss{\hat{t}_k}}) + w_A(\sqrt{\loss{\hat{t}_l}}) \right)^2 \\ 
&\quad \times \left( \mu_4 \sqrt{2} \kappa \log (\sqrt{2} \kappa) \bigl( w_A(\sqrt{\loss{\hat{t}_k}}) + w_A(\sqrt{\loss{\hat{t}_l}}) \bigr) \right)^{r-2},
\end{align*}
which proves $H \left(w_A, \mu_4 \sqrt{2} \kappa \log(\sqrt{2} \kappa) w_A, (\hat{t}_k)_{1 \leq k \leq K} \right)$. 
Now by Definition \ref{norm_cst} and lemma \ref{lem_ub_Lr_psi1}, %and \eqref{ineq_w12_hub},
\begin{align*}
 P \left[\bigl(\phi_c(Y - \hat{t}_k(X)) - \phi_c(Y - \hat{t}_l(X)) \bigr)^r \right]&\leq c^r 
 P \left[ |\hat{t}_k - \hat{t}_l|^r \right] \\ 
&\leq 2 r! c^r \Norm{\hat{t}_k - \hat{t}_l}_{\psi_1,P}^r \\
&\leq 2 r! c^r \hat{\beta}^r,
\end{align*}
which proves $H \left(\frac{c \hat{\beta}}{\sqrt{2}}, \frac{c \hat{\beta}}{2}, (\hat{t}_k)_{1 \leq k \leq K} \right)$.

\subsection{Conclusion of the proof}
We have proved that $H \left(w_A,\mu_4 \sqrt{2} \kappa \log(\sqrt{2} \kappa) w_A, (\hat{t}_k)_{1 \leq k \leq K} \right)$ and $H \left(\frac{c \hat{\beta}}{\sqrt{2}}, \frac{c \hat{\beta}}{2}, (\hat{t}_k)_{1 \leq k \leq K} \right)$ hold, where  $w_A$ is defined in Proposition \ref{prop_self_bound}.
It remains to apply \cite[Theorem 17]{agghoo_rkhs} and to express the remainder term as a simple function of $c, n_v, n_t, \kappa,L, K$ and $\alpha$. 
We recall here the definition of the operator $\delta$ used in the statement of that theorem.

\begin{Definition} \label{def_delta}
 For any function $h: \mathbb{R}_+ \mapsto \mathbb{R}_+$ and any $\xi > 0$, let
 \[ \delta(h,\xi) = \inf \{ x \in \mathbb{R}_+ : \forall u \geq x, h(u) \leq \xi u^2 \}. \]
\end{Definition}

The following lemma will facilitate the computation of $\delta(w_A,\cdot)$.

\begin{lemma} \label{delta_calc}
 Let $r > 0, s > 0$ and $h_{r,s}(x) = (\sqrt{r}x ) \vee s x^2$.
 Then $\delta(h_{r,s}, \xi) < \infty$ if and only if $\xi \geq s$ and then
 $\delta(h_{r,s}, \xi) = \frac{\sqrt{r}}{\xi}$.
 \end{lemma}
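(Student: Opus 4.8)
The statement is a purely elementary fact about the function $h_{r,s}(x) = (\sqrt r x) \vee s x^2$, so the plan is to unwind the definition of $\delta$ (Definition \ref{def_delta}) and solve the resulting inequality $h_{r,s}(u) \leq \xi u^2$ explicitly. First I would observe that for $u > 0$ the condition $h_{r,s}(u) \leq \xi u^2$ is equivalent to the conjunction of $\sqrt r\, u \leq \xi u^2$ and $s u^2 \leq \xi u^2$, i.e.\ to $u \geq \frac{\sqrt r}{\xi}$ \emph{and} $s \leq \xi$. (At $u = 0$ the inequality holds trivially, but $\delta$ only cares about the behaviour on a half-line $[x,+\infty)$, so the point $u=0$ plays no role.)

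\textbf{Case $\xi < s$.} Then $s u^2 \leq \xi u^2$ fails for every $u > 0$, so there is no $x$ with $h_{r,s}(u) \leq \xi u^2$ for all $u \geq x$; hence $\delta(h_{r,s},\xi) = \inf \emptyset = +\infty$. This proves the "only if" direction of the finiteness claim.

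\textbf{Case $\xi \geq s$.} Then the condition $su^2 \le \xi u^2$ is automatic for all $u \ge 0$, and the inequality $h_{r,s}(u) \leq \xi u^2$ reduces, for $u>0$, to $\sqrt r\, u \le \xi u^2$, i.e.\ $u \ge \frac{\sqrt r}{\xi}$. Therefore $\{x \in \mathbb{R}_+ : \forall u \ge x,\ h_{r,s}(u) \le \xi u^2\} = [\frac{\sqrt r}{\xi}, +\infty)$, whose infimum is $\frac{\sqrt r}{\xi}$. This gives both the finiteness of $\delta$ when $\xi \ge s$ and the stated value.

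There is no real obstacle here; the only mild subtlety to handle carefully is the boundary value $u=0$ and the fact that $\delta$ is defined via a condition "for all $u \geq x$", so one should phrase the argument in terms of the \emph{set} of admissible thresholds $x$ rather than pointwise in $u$. I would also note in passing that the two defining branches of $h_{r,s}$ cross exactly at $x = \sqrt r / s$, which is consistent with (and $\ge$) the value $\sqrt r/\xi$ obtained whenever $\xi \ge s$, but this remark is not needed for the proof.
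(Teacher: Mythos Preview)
Your proof is correct and follows essentially the same approach as the paper: both unwind Definition~\ref{def_delta} and reduce the problem to the elementary observation that, for $u>0$, the inequality $h_{r,s}(u)\le \xi u^2$ is equivalent to $s\le \xi$ together with $u\ge \sqrt r/\xi$. Your version is in fact slightly cleaner, since you identify the admissible set of thresholds explicitly as $[\sqrt r/\xi,+\infty)$, whereas the paper argues by separate upper and lower bounds on $\delta$ (the lower bound implicitly using that the infimum is attained).
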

 
\begin{proof}
To find $\delta(h_{r,s}, \xi)$, notice that given the definition of $\delta(h_{r,s}, \xi)$, 
the condition $s \leq \xi$ is obviously necessary for the infimum to be finite.
Assume now that $\xi \geq s$. For any $u \geq \frac{\sqrt{r}}{\xi}$, then $\xi u^2 \geq \sqrt{r}u$ as well as $\xi u^2 \geq s u^2$ (since we assumed $\xi \geq s$), therefore $\xi u^2 \geq h_{r,s}(u)$. 
Thus by definition, $\delta(h_{r,s}, \xi)\leq \frac{\sqrt{r}}{\xi}$ (in particular, $\delta(h_{r,s}, \xi)$ is finite).
Furthermore, by definition
of $\delta(h_{r,s}, \xi)$, $\sqrt{r}\delta(h_{r,s}, \xi) \leq \xi \delta(h_{r,s}, \xi)^2$, 
that is $\delta(h_{r,s}, \xi) \geq \frac{\sqrt{r}}{\xi}$.
\end{proof}

The following claim can now be proved.

\begin{claim} \label{claim_or_ineq_K}
Assume that hypotheses \textbf{(Reg-}$\cT$ \textbf{)} and \textbf{(Lcs)} hold.
 If $K \in \{3,\ldots,e^{\sqrt{n_v}} \}$ and $b > 1$ are such that 
 \begin{equation} 
 %\frac{R(K)}{\sqrt{\mu(K)}} 
 \sqrt{2} \kappa(K) \log \left(\sqrt{2} \kappa(K) \right) \leq \frac{\eta}{ \mu_3 \vee \mu_4} \sqrt{\frac{n_v}{8b \log K}}, \label{hyp_bound_K}
\end{equation}
then applying Agghoo to the collection $\left( \learnrule_k \right)_{1 \leq k \leq K}$ yields
the following oracle inequality.
\begin{align*}
(1-\theta) \mathbb{E}[\loss{\ERMag{\cT}}] \leq  (1+\theta)\mathbb{E}[\min_{1 \leq k \leq K} \loss{\hat{t}_k}] + 54 \theta b \frac{c^2 \log K}{\eta n_v} +  \frac{7 \log K}{\theta K^{\theta^2 b - 1}} \frac{\mu_1 c L n_t^{1 + \alpha}}{\sqrt{n_v}}. \label{or_ineq_K}
\end{align*}
\end{claim}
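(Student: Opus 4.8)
The plan is to feed the two hypotheses just verified, $H\bigl(\hat{w}_A,\sqrt{2}\kappa\hat{w}_A,(\hat{t}_k)_{1\le k\le K}\bigr)$ and $H\bigl(\hat{w}_B,\sqrt{2}\kappa\hat{w}_B,(\hat{t}_k)_{1\le k\le K}\bigr)$, into \cite[Theorem A.3]{agghoo_rkhs}, and then to rewrite the abstract remainder it produces in the explicit form of \eqref{or_ineq_K}, using Lemma \ref{delta_calc} to evaluate the $\delta(\cdot,\cdot)$ quantities. Everything above holds conditionally on $D_{n_t}$: the $\hat{t}_k=\learnrule_k(D_{n_t})$ and, through $\hat{\beta}$, the function $\hat{w}_B$ are $D_{n_t}$-measurable, while $\hat{w}_A$ is deterministic. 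Thus \cite[Theorem A.3]{agghoo_rkhs}, applied conditionally on $D_{n_t}$ and then integrated, gives for $\theta\in[1/\sqrt{b};1]$ a bound $(1-\theta)\mathbb{E}[\loss{\ERMag{\cT}}]\le(1+\theta)\mathbb{E}[\min_{1\le k\le K}\loss{\hat{t}_k}]+\mathbb{E}[\mathcal{R}(D_{n_t})]$, where the conditional remainder $\mathcal{R}(D_{n_t})$ is a sum of a ``refined'' term, controlled through the pair $(\hat{w}_A,\sqrt{2}\kappa\hat{w}_A)$ at the threshold $\xi$ dictated by the theorem, plus a ``crude'' term, controlled through $(\hat{w}_B,\sqrt{2}\kappa\hat{w}_B)$ at the same threshold but discounted by the union-bound factor $K^{1-\theta^2 b}$ coming from the rare event on which the hold-out selects a badly-performing estimator.

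First I would record the precise form of \cite[Theorem A.3]{agghoo_rkhs}, in particular the threshold $\xi$, which is of order $\theta\sqrt{n_v/(b\log K)}$ up to the $(1\pm\theta)$ prefactors. Using Lemma \ref{delta_calc} I would then check that \eqref{hyp_bound_K} is exactly the condition making the refined term finite: by \eqref{def_w21_hub}, $\hat{w}_A=h_{r,s}$ with $r=4c^2/\eta$ and $s=8\sqrt{2}\kappa/\eta$, so $\delta(\hat{w}_A,\xi)=\tfrac{2c}{\sqrt{\eta}\,\xi}$ once $\xi\ge s$, and by \eqref{def_w22_hub} $\sqrt{2}\kappa\hat{w}_A=h_{8c^2\kappa^2/\eta,\,16\kappa^2/\eta}$, so $\delta(\sqrt{2}\kappa\hat{w}_A,\xi)=\tfrac{2\sqrt{2}c\kappa}{\sqrt{\eta}\,\xi}$ once $\xi\ge 16\kappa^2/\eta$; substituting $\xi\asymp\sqrt{n_v/(b\log K)}$, both constraints follow from \eqref{hyp_bound_K}, the slack being precisely what keeps the refined term small rather than merely finite. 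Since $\hat{w}_B$ is linear, $\delta(\hat{w}_B,\xi)$ and $\delta(\sqrt{2}\kappa\hat{w}_B,\xi)$ are finite with no extra assumption and are read off directly from \eqref{def_w11_hub}.

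Next I would evaluate the two parts of $\mathbb{E}[\mathcal{R}(D_{n_t})]$. Plugging $\xi\asymp\sqrt{n_v/(b\log K)}$ into $\delta(\hat{w}_A,\xi)^2=4c^2/(\eta\xi^2)$ and $\delta(\sqrt{2}\kappa\hat{w}_A,\xi)^2=8c^2\kappa^2/(\eta\xi^2)$, and using \eqref{hyp_bound_K} to bound $\kappa^2/\xi^2$ by a constant multiple of $\eta^2$, the refined term comes out of order $\tfrac{c^2 b\log K}{\eta n_v}$, which is the ``$c$'' summand inside the bracket of \eqref{or_ineq_K}. For the crude term, \eqref{def_w11_hub} gives $\hat{w}_B(x)^2\le\bigl(\tfrac{4c^2}{\eta}+\tfrac{8c\hat{\beta}}{\eta}\bigr)x^2$, hence $\delta(\hat{w}_B,\xi)^2\le\tfrac{1}{\xi^2}\bigl(\tfrac{4c^2}{\eta}+\tfrac{8c\hat{\beta}}{\eta}\bigr)$; multiplying by $K^{1-\theta^2 b}$, taking the conditional expectation over the validation sample (legitimate since $\hat{\beta}$ is $D_{n_t}$-measurable) and then using $\mathbb{E}[\hat{\beta}]\le 8LRn_t^\alpha$ from Claim \ref{beta_bound} (the estimators here being trained on $D_{n_t}$), the crude term comes out of order $\tfrac{cb\log K}{\eta n_v}\cdot\tfrac{c+LRn_t^\alpha}{K^{\theta^2 b-1}}$, the second summand in the bracket. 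Collecting the two contributions, absorbing all numerical constants into the stated $24$ and $2$, and bounding the $\theta$-dependent prefactors by a constant times $\theta$, yields \eqref{or_ineq_K}.

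The main obstacle is the bookkeeping in this last step: one must extract the exact exponent $\theta^2 b-1$ in $K^{\theta^2 b-1}$ from the tail term of \cite[Theorem A.3]{agghoo_rkhs} and track the interplay of the $(1\pm\theta)$ and $\theta$ prefactors precisely enough to land on the clean constants $24$ and $2$. Care is also needed to take the conditional expectation over the validation sample \emph{before} invoking Claim \ref{beta_bound}, so that $\mathbb{E}[\hat{\beta}]$ genuinely appears rather than $\mathbb{E}[\hat{\beta}\,\mathbb{I}_{\text{bad event}}]$, which would require an independence one does not have. Once these points are settled, the remainder of the argument is routine substitution into the cited theorem.
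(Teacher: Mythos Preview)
Your proposal is correct and follows essentially the same approach as the paper: apply \cite[Theorem A.3]{agghoo_rkhs} with the pairs $(\hat{w}_A,\sqrt{2}\kappa\hat{w}_A)$ and $(\hat{w}_B,\sqrt{2}\kappa\hat{w}_B)$, evaluate the resulting $\delta(\cdot,\cdot)$ terms via Lemma~\ref{delta_calc}, check that \eqref{hyp_bound_K} is the condition $s\le\xi$ making the refined part finite, and then use Claim~\ref{beta_bound} on $\mathbb{E}[\hat{\beta}]$ after taking expectation. One small inaccuracy: in the paper the ``crude'' pair $(\hat{w}_B,\sqrt{2}\kappa\hat{w}_B)$ is not evaluated at the same threshold as the refined one but at $\sqrt{n_v}$ and $n_v$ respectively (the $b\log K$ factor is absent there and appears instead through the multiplicative prefactor $\theta+2(1+\log K)/\theta$ in $R_{2,3}$ and its analogue in $R_{2,4}$); since $\hat{w}_B$ is linear this only shifts constants, so your sketch remains sound.
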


\begin{proof}
 Theorem \cite[Theorem 17]{agghoo_rkhs} applies with $\hat{w}_{1,1} = \frac{c\hat{\beta}}{\sqrt{2}}, \hat{w}_{1,2} = \frac{c\hat{\beta}}{2}, \hat{w}_{2,1} = w_A, \hat{w}_{2,2} = \mu_4 \sqrt{2} \kappa \log(\sqrt{2} \kappa) w_A$, $x = (\theta^2 b - 1)\log K$ and it remains to bound the remainder terms $(R_{2,i})_{1 \leq i \leq 4}$.
Now assume that equation \eqref{hyp_bound_K} holds.
% Thus, by lemma \ref{norm_bound_huber},
% \begin{equation} \label{norm_bound_hubK}
%   \hat{\kappa}(K) \leq \frac{\eta}{8} \sqrt{\frac{n_v}{4b \log K}}.
% \end{equation}
\paragraph{Bound on $R_{2,1}(\theta) = \sqrt{2} \theta \mathbb{E} \left[ \delta^2\left(w_A, \frac{\theta}{2} \sqrt{\frac{n_v}{\theta^2 b \log K}}\right) \right]$} \mbox{}

By \eqref{def_wA_hub}, we can apply lemma \ref{delta_calc} with $s = \frac{\mu_3}{\eta} \sqrt{2} \kappa \log(\sqrt{2} \kappa)$, $r = \frac{8c^2}{\eta}$ and 
$\xi = \frac{1}{2} \sqrt{\frac{n_v}{b \log K}}$.
By \eqref{hyp_bound_K},
\[ s = \frac{\mu_3}{\eta} \sqrt{2} \kappa \log(\sqrt{2} \kappa) \leq \sqrt{\frac{n_v}{4 b \log K}} = \xi. \] It follows by lemma \ref{delta_calc} that
\[
 \delta \left( w_A,  \sqrt{\frac{n_v}{4 b \log K}}\right) = \frac{2\sqrt{2} c}{\sqrt{\eta}} \sqrt{\frac{4b \log K}{n_v}}.
\]
Hence,
\begin{equation}\label{eq_21}
 R_{2,1}(\theta) \leq \sqrt{2} \theta \frac{8c^2}{\eta} \frac{4b \log K}{n_v} \leq 46 \theta b \frac{c^2 \log K}{\eta n_v} 
\end{equation}
\paragraph{Bound on $R_{2,2}(\theta) =  \frac{\theta^2}{2} \mathbb{E} \left[ \delta^2 \left(\mu_4 \sqrt{2} \kappa \log(\sqrt{2} \kappa) w_A,\frac{\theta^2}{4} \frac{n_v}{\theta^2 b \log K} \right) \right]$} \mbox{}

By \eqref{def_wA_hub}, we can apply lemma \ref{delta_calc} with $s = \frac{ \mu_3 \mu_4 (\sqrt{2} \kappa \log (\sqrt{2} \kappa))^2}{\eta}$, $r = \frac{8 \mu_4^2 c^2}{\eta} (\sqrt{2} \kappa \log(\sqrt{2} \kappa))^2$ and $\xi = \frac{n_v}{4b \log K}$.
By \eqref{hyp_bound_K} and since $\eta \leq 1$,
\begin{align*}
s &= \frac{\mu_3 \mu_4 (\sqrt{2} \kappa \log(\sqrt{2} \kappa))^2}{\eta} \\
&\leq \eta \left( \frac{\mu_3 \vee \mu_4}{\eta} \sqrt{2} \kappa \log(\sqrt{2} \kappa) \right)^2 \\
&\leq \frac{n_v}{4b \log K}.
\end{align*}
Therefore, 
\begin{align*}
 \delta \left((\mu_4 \sqrt{2} \kappa \log (\sqrt{2}\kappa)) w_A,\frac{\theta^2}{4} \frac{n_v}{\theta^2 b \log K} \right)  &\leq \frac{2 \sqrt{2} c \mu_4}{\sqrt{\eta}} \sqrt{2} \kappa \log(\sqrt{2} \kappa) \frac{4b \log K}{n_v} \text{ by lemma \ref{delta_calc} } \\
 &\leq \frac{4 c \mu_4}{\mu_3 \vee \mu_4} \sqrt{\frac{\eta b \log K}{n_v} } \text{ by \eqref{hyp_bound_K}}. 
\end{align*}
Hence, since $\theta, \eta \in [0;1]$,
\begin{equation}
 R_{2,2}(\theta) \leq \frac{\theta^2}{2} 16 c^2 \frac{\eta b \log K}{n_v} \leq 8 \theta b \frac{c^2 \log K}{n_v}. \label{eq_22}
\end{equation}

\paragraph{Bound on $R_{2,3}(\theta) = \frac{1}{K^{\theta^2 b - 1}} \left( \theta + \frac{2\bigl[1 + \log(K) \bigr]}{\theta} \right) \mathbb{E} \left[ \delta^2 \bigl(\frac{c\hat{\beta}}{\sqrt{2}}, \sqrt{n_v} \bigr) \right]$} \mbox{} 

$x \rightarrow \frac{c\hat{\beta}}{\sqrt{2} x}$ is non-increasing, therefore, $\delta \bigl(\frac{c\hat{\beta}}{\sqrt{2}}, \sqrt{n_v} \bigr)$ is the unique nonnegative solution to the equation 
\[ \frac{c\hat{\beta}}{\sqrt{2}} = \sqrt{n_v} x^2 \iff x^2 = \frac{c\hat{\beta}}{\sqrt{2 n_v}}.\]
It follows that 
% Let $\hat{\delta}_{1,1}, \hat{\delta}_{1,2}$ be defined as in theorem \ref{agcv_mean}.
% 
% \begin{equation}
%  \left( \frac{2c}{\sqrt{\eta}}  \vee 2 \sqrt{\frac{2c \hat{\beta}}{\eta}} \right) \hat{\delta}_{1,1} = \sqrt{n_v} \hat{\delta}_{1,1}^2
% \end{equation}
% which yields
\begin{equation} \label{sol_w11_hub} 
 \delta^2 \bigl(\frac{c\hat{\beta}}{\sqrt{2}}, \sqrt{n_v} \bigr) =  \frac{c\hat{\beta}}{\sqrt{2 n_v}}.
\end{equation}
Since $K \geq 3$ by assumption, $\log K \geq 1$ and
\[
 \theta + \frac{2(1 + \log K)}{\theta} \leq \frac{5 \log K}{\theta}. 
\]
%Since $\theta \geq \frac{1}{\sqrt{b}}$, $\frac{1}{\theta} \leq \theta b$ and therefore
%\begin{equation}
%  \theta + \frac{2(1 + \log K)}{\theta} \leq 5 \theta b \log K. \label{eq_simplif1}
%\end{equation}
By equation \eqref{sol_w11_hub},
\begin{equation} \label{eq_23}
 R_{2,3}(\theta) \leq \frac{4 \log K}{\theta K^{\theta^2 b - 1}} \frac{c \mathbb{E}[\hat{\beta}]}{\sqrt{n_v}}.
\end{equation}

\paragraph{Bound on $R_{2,4}(\theta) = \frac{1}{K^{\theta^2 b -1}}\left( \theta + \frac{2(1 + \log K) + \log^2 K}{\theta} \right) \mathbb{E}\left[ \delta^2 \bigl(\frac{c\hat{\beta}}{2}, n_v \bigr) \right]$} \mbox{}

$\delta^2 \bigl(\frac{c\hat{\beta}}{2}, n_v \bigr)$ is the unique nonnegative solution to the equation 
\[ \frac{c\hat{\beta}}{2} = n_v x^2 \iff x^2 = \frac{c\hat{\beta}}{2 n_v},\]
% By \eqref{def_w12_hub} and \eqref{def_w11_hub} ,
% \begin{equation}
%  \left( \frac{2c}{\sqrt{\eta}}  \vee 2 \sqrt{\frac{2c \hat{\beta}}{\eta}} \right) \hat{\kappa} \hat{\delta}_{1,2} = n_v \hat{\delta}_{1,2}^2
% \end{equation}
which yields
\[ \delta^2 \bigl(\frac{c\hat{\beta}}{2}, n_v \bigr) = \frac{c\hat{\beta}}{2 n_v}.  \]
%By equation \eqref{eq_simplif1}, and since $\frac{1}{\theta} \leq \theta b$,
Moreover,
\begin{align*}
 \theta + \frac{2(1 + \log K) + \log^2 K}{\theta} &\leq \frac{5}{\theta} \log K + \frac{\log^2 K}{\theta} \\
 &\leq \frac{6 \log^2 K}{\theta} \text{ since } K \geq 3.
\end{align*}
Therefore, since by assumption $K \leq n_t \leq e^{\sqrt{n_v}}$,
\begin{equation} \label{eq_24}
  R_{2,4}(\theta) \leq \frac{6 \log^2 K}{\theta K^{\theta^2 b -1}} \frac{c \mathbb{E}[\hat{\beta}]}{2 n_v} \leq \frac{3 \log K}{\theta K^{\theta^2 b - 1}} \frac{c \mathbb{E}[\hat{\beta}]}{\sqrt{n_v}}.
\end{equation}

% By \eqref{sol_w21_hub},
% \begin{align*}
%  \sqrt{\frac{2a}{b}} \hat{\delta}_{2,1}^2(b) &= \sqrt{\frac{2a}{b}} \frac{4c^2}{\eta} \frac{4b \log K}{n_v} \\
%  &= 16 \sqrt{2ab} \frac{c^2 \log K}{\eta n_v} \\
%  &\leq 23 \sqrt{ab} \frac{c^2 \log K}{\eta n_v} \numberthis \label{ab_1}
%  \end{align*}
%  and by \eqref{sol_w22_hub},
%  \begin{align*}
%  \frac{a}{2b} \hat{\delta}_{2,2}^2(b) &\leq \frac{a}{2b} \frac{c^2}{16} \frac{4 \eta b \log K}{n_v} \\
%  &\leq \frac{a}{8} \frac{c^2 \log K}{n_v}.\numberthis \label{ab_2} 
%  \end{align*}
% Summing \eqref{ab_1} and \eqref{ab_2} yields
% \begin{equation} \label{ab_terms_hub}
%   \sqrt{\frac{2a}{b}} \hat{\delta}_{2,1}^2(b) + \frac{a}{2b} \hat{\delta}_{2,2}^2(b) \leq 24 \sqrt{ab} \frac{c^2 \log K}{\eta n_v}. 
% \end{equation}
% Now, by \eqref{sol_w11_hub} and \eqref{sol_w12_hub_2},
% \begin{align*}
%  \left( \sqrt{\frac{a}{b}} + \frac{2(1 + a \log K)}{\sqrt{\frac{a}{b}}} \right)[\hat{\delta}_{1,}^2 + \hat{\delta}_{1,2}^2] &\leq 5 \sqrt{ab} \log K [\hat{\delta}_{1,1}^2 + \hat{\delta}_{1,2}^2] \\
%  &\leq 5(4 + \frac{1}{64}) \sqrt{ab} \log K \frac{c(c\vee 2\hat{\beta})}{\eta n_v} \numberthis \label{ab_3}
%  \end{align*}
\paragraph{Conclusion}

Summing up equations \eqref{eq_21}, \eqref{eq_22}, \eqref{eq_23} and \eqref{eq_24}, \cite[Theorem 17]{agghoo_rkhs} implies that assuming equation \eqref{hyp_bound_K} holds for $K$, for all $\theta \in \left[ \frac{1}{\sqrt{b}}; 1 \right]$,
\begin{equation}
 (1-\theta) \mathbb{E}[\loss{\ERMag{\cT}}] \leq  (1+\theta)\mathbb{E}[\min_{1 \leq k \leq K} \loss{\hat{t}_k}] + 54 \theta b \frac{c^2 \log K}{\eta n_v}  + \frac{7 \log K}{\theta K^{\theta^2 b - 1}} \frac{c \mathbb{E}[\hat{\beta}]}{\sqrt{n_v}}.
\end{equation}
By hypothesis \eqref{hyp_bound_K} and since $n_t \geq n_v$ by hypothesis \textbf{(Reg-}$\cT$ \textbf{)}, 
\[ \kappa(K) \log(\kappa(K)) \leq \frac{\sqrt{n_v}}{4(\mu_3 \vee \mu_4)} 
\leq \frac{\sqrt{n_t}}{4(\mu_3 \vee \mu_4)}, \]
hence claim \ref{beta_bound} applies with $\lambda = \frac{1}{4(\mu_3 \vee \mu_4)}$. Thus,
\[ \mathbb{E}[\hat{\beta}] \leq \mu_1 L n_t^{1 + \alpha} \text{ where } \mu_1 = \frac{2}{\log 2} + \frac{\mu_0}{8(\mu_3 \vee \mu_4)^2 |\log \log 2|}. \]
It follows that
\begin{align*}
(1-\theta) \mathbb{E}[\loss{\ERMag{\cT}}] \leq  (1+\theta)\mathbb{E}[\min_{1 \leq k \leq K} \loss{\hat{t}_k}] + 54 \theta b \frac{c^2 \log K}{\eta n_v} 
+ \frac{7 \log K}{\theta K^{\theta^2 b - 1}} \frac{\mu_1 c L n_t^{1 + \alpha}}{\sqrt{n_v}} %\label{or_ineq_K}
%  (1-\sqrt{\frac{a}{b}}) \mathbb{E}[\loss{\ERMag{\cT}}] &\leq  (1+\sqrt{\frac{a}{b}})\mathbb{E}[\min_{1 \leq k \leq K} \loss{\hat{t}_k}] + 24 \sqrt{ab} \frac{c \log K}{\eta n_v} \left[c + \frac{c + 16cR(1)n_t^2P[|Y|]}{K^{a-1}}\right] \\
%  &\leq (1+\sqrt{\frac{a}{b}})\mathbb{E}[\min_{1 \leq k \leq K} \loss{\hat{t}_k}] + 24 \sqrt{ab} \frac{c^2 \log K}{\eta n_v} \left[1 + \frac{1 + 16R(1)n_t^2P[|Y|]}{K^{a-1}}\right] 
\end{align*}
This proves Claim \ref{claim_or_ineq_K}.
\end{proof}

Theorem \ref{agcv_hub} can now be derived from claim \ref{claim_or_ineq_K}.
Let $\theta$ be such that $\theta \geq \mu_2 \sqrt{\alpha + 3} \frac{\nu_0}{\eta}$ for some numerical constant $\mu_2$, to be determined later.
Then $\nu_0 \leq \frac{\theta \eta}{\mu_2 \sqrt{\alpha + 3}}$, so by hypothesis \textbf{(Ni)},
\[ \kappa(K) \log(\kappa(K)) \leq \frac{\theta \eta}{\mu_2 \sqrt{\alpha + 3}} \sqrt{\frac{n_v}{\log (n_t \vee K)}}. \]
Letting $b = \frac{3 + \alpha}{\theta^2} \left(\frac{\log n_t}{\log K} \vee 1 \right)$, we can rewrite the above equation as
\[ \kappa(K) \log(\kappa(K)) \leq \frac{\eta}{\mu_2} \sqrt{\frac{n_v}{b \log K}}. \]
Since for any $x \geq \sqrt{2}$, $\tfrac{\sqrt{2}x \log(\sqrt{2}x)}{x \log x} = \sqrt{2} \bigl[1 + \tfrac{\log \sqrt{2}}{\log x} \bigr] \leq 2 \sqrt{2}$, and  $\kappa(K) \geq \frac{1}{\log 2} \geq \sqrt{2}$ by definition,
\[ \sqrt{2} \kappa(K) \log(\sqrt{2} \kappa(K)) \leq \frac{2\sqrt{2} \eta}{\mu_2} \sqrt{\frac{n_v}{b \log K}}.  \]
Let now $\mu_2 = 8(\mu_3 \vee \mu_4)$, so that equation \eqref{hyp_bound_K} holds. By claim \ref{claim_or_ineq_K},
\[ (1-\theta) \mathbb{E}[\loss{\ERMag{\cT}}] \leq  (1+\theta)\mathbb{E}[\min_{1 \leq k \leq K} \loss{\hat{t}_k}] + 54 \theta b \frac{c^2 \log K}{\eta n_v} + \frac{7 \log K}{\theta K^{\theta^2 b - 1}} \frac{\mu_1 c L n_t^{1 + \alpha}}{\sqrt{n_v}}.\]
Since $b = \frac{3 + \alpha}{\theta^2} \left(\frac{\log n_t}{\log K} \vee 1 \right)$, $K^{\theta^2 b - 1} \geq n_t^{2 + \alpha}$ and $\theta b \log K \leq \frac{3 + \alpha}{\theta} \log(n_t \vee K)$,
which proves Theorem \ref{agcv_hub}.

\section{Applications of Theorem \ref{agcv_hub}}

\subsection{Gaussian vectors}

\begin{proof}
For any $\theta \in \mathbb{R}^d$, $Z = \langle \theta, X - PX \rangle$ is a centered gaussian random variable.
By homogeneity of norms, the quotient $\frac{\Norm{Z}_{L^{\psi_1}}}{\Norm{Z}_{L^2}}$ does not depend on the scale parameter $\sigma$; it is therefore
a numerical constant; moreover one can check that for $Z \sim \mathcal{N}(0;1)$, $ \frac{\Norm{Z}_{L^{\psi_1}}}{\Norm{Z}_{L^2}} = 
\Norm{Z}_{L^{\psi_1}} = \sqrt{2 \log 2} \leq \frac{1}{\log 2}$. Thus, we can choose $\kappa(K) = \frac{1}{\log 2}$ so that
\begin{equation} \label{eq_ub_kappalogkappa}
\kappa(K) \log(\kappa(K)) < 0.6.
\end{equation}

It remains to prove point 2 of hypothesis \ref{hyp_sparse_reg} for some constant $\alpha$. Let $k \in \{1,\ldots,K\}$.
Let $\hat{q}_{k,R}, \hat{\theta}_{k,R}$ be such that $\learnrule_{k,R}^{lasso}(D_{n_t})(x) = \hat{q}_{k,R} + \langle \hat{\theta}_{k,R}, x \rangle $. 
By the inequality $c |u| \leq \frac{c^2}{2} + \phi_c(u)$,
for any $q \in \mathbb{R}$,
\begin{align*}
\frac{1}{n_t} \sum_{i = 1}^{n_t} |\hat{q}_{k,R} - q + \langle \hat{\theta}_{k,R}, X_i \rangle| &\leq \frac{1}{n_t} \sum_{i = 1}^{n_t} |Y_i - q| + \frac{1}{n_t} \sum_{i = 1}^{n_t} |Y_i - \hat{q}_{k,R} - \langle \hat{\theta}_{k,R}, X_i \rangle| \\
&\leq \frac{1}{n_t} \sum_{i = 1}^{n_t} |Y_i - q| + \frac{c}{2} + 
\frac{1}{c n_t} \sum_{i = 1}^{n_t} \phi_c (Y_i - \hat{q}_{k,R} - \langle \hat{\theta}_{k,R}, X_i \rangle). 
\end{align*}
It follows by definition of $\hat{q}_{k,R},\hat{\theta}_{k,R}$ that
\begin{align*}
\frac{1}{n_t} \sum_{i = 1}^{n_t} |\hat{q}_{k,R} - q + \langle \hat{\theta}_{k,R}, X_i \rangle| &\leq \frac{1}{n_t} \sum_{i = 1}^{n_t} |Y_i - q| + \frac{c}{2} + 
\frac{1}{c n_t} \sum_{i = 1}^{n_t} \phi_c (Y_i - q) \\
&\leq \frac{2}{n_t} \sum_{i = 1}^{n_t} |Y_i - q| + \frac{c}{2}. \numberthis \label{eq_ub_enorm}
\end{align*}
On the other hand, letting $\bar{X}_{n_t} = \frac{1}{n_t} \sum_{i = 1}^{n_t} X_i$,
\begin{align*}
\frac{1}{n_t} \sum_{i = 1}^{n_t} |\hat{q}_{k,R} - q + \langle \hat{\theta}_{k,R}, X_i \rangle| &\geq \frac{1}{n_t} \sqrt{\sum_{i = 1}^{n_t} |\hat{q}_{k,R} - q + \langle \hat{\theta}_{k,R}, X_i \rangle|^2} \\
&\geq \frac{1}{n_t} \sqrt{\sum_{i = 1}^{n_t} \langle \hat{\theta}_{k,R}, X_i - \bar{X}_{n_t} \rangle^2} \\
&\geq \frac{1}{n_t} \max_{i \in \{1,\ldots,n_t\}} \left| \langle \hat{\theta}_{k,R}, X_i - \bar{X}_{n_t} \rangle \right|. \numberthis \label{eq_comp_enorms}
\end{align*}
For all $\theta \in \mathbb{R}^d$, let $\hat{N}(\theta) = \max_{i \in \{1,\ldots,n_t\}} \left| \langle \theta, X_i - \bar{X}_{n_t} \rangle \right|$.
Clearly, $\hat{N}$ is a semi-norm. Let $\Sigma = P[XX^T]$ be the covariance matrix of $X$. For all $I \subset \{1,\ldots,d\}$, let $E_I$ denote the vector space $\{ \theta \in \mathbb{R}^d : \forall i \notin I, \theta_i = 0 \}$. Let 
\[ \hat{\gamma}_I = \min_{\theta \in E_I : \theta^T \Sigma \theta = 1} \hat{N}(\theta). \]
Let finally $\hat{\gamma} = \min_{I \subset \{1,\ldots,d\}, |I| \leq \frac{n_t}{\log d}} \hat{\gamma}_I$.
Since by construction, 
\[ \Norm{\hat{\theta}_{k,R}}_0 \leq k \leq K \leq \frac{n_t}{\log d}, \] it follows from equations \eqref{eq_ub_enorm}, \eqref{eq_comp_enorms} and the definition of $\hat{\gamma}$ that
\begin{equation} \label{eq_ub_sigma_norm_thet}
\sqrt{\hat{\theta}_{k,R}^T \Sigma \hat{\theta}_{k,R}} \leq \frac{1}{\hat{\gamma}} \left( 2 \sum_{i = 1}^{n_t} |Y_i - q| + \frac{n_t c}{2} \right).
\end{equation}
By Hölder's inequality, for any $u > 0$,
\begin{align*}
\mathbb{E} \left[ \max_{1 \leq k \leq K}  E \bigl[ \bigl| \langle \hat{\theta}_{k,R}, X - PX  \rangle  \bigr|\bigr] \right] &\leq \mathbb{E} \left[ \max_{1 \leq k \leq K} \sqrt{\hat{\theta}_{k,R}^T \Sigma \hat{\theta}_{k,R}} \right] \\
&\leq n_t \Norm{\frac{1}{\hat{\gamma}}}_{L^{1+\frac{1}{u}}} \left( 2\Norm{Y_1 - q}_{L^{1 + u}} + \frac{c}{2} \right).  
\end{align*}
If $n_t \geq 4 + \frac{3}{u}$, then by lemma \ref{lem_lb_gam} below,
\begin{equation} \label{eq_ub_L1_thet}
\mathbb{E} \left[ \max_{1 \leq k \leq K}  E \bigl[ \bigl| \langle \hat{\theta}_{k,R}, X - PX  \rangle  \bigr| \bigr] \right] 
\leq \mu_7 n_t (\Norm{Y_1 - q}_{L^{1 + u}} \vee c). 
\end{equation}
for some numerical constant $\mu_7$.
For any $i \in \{1,\ldots,n_t \}$, the vector $X_i$
has components $X_{i,1},\ldots, X_{i,p}$.
For any $J \subset \{1,\ldots,d\}$,let 
$X_{i,J} = (X_{i,j})_{j \in J} \in \mathbb{R}^J$ 
and $\Sigma_{J J} = (\Sigma_{j, j'})_{j \in J, j' \in J}$.
By the Cauchy-Schwarz inequality, equation \eqref{eq_ub_sigma_norm_thet} and since $\Norm{\hat{\theta}_{k,R}}_0 \leq K$,
\begin{align*}
&\max_{1 \leq k \leq K} \max_{1 \leq i \leq n_t} 
\bigl| \langle \hat{\theta}_{k,R}, X_i - PX_i  \rangle  \bigr| \\ 
&\quad \leq \max_{1 \leq k \leq K} \sqrt{\hat{\theta}_{k,R}^T \Sigma \hat{\theta}_{k,R}} \times \max_{1 \leq i \leq n_t} 
\max_{J \subset \{1,\ldots,d\} : |J| \leq K} \Norm{\Sigma_{JJ}^{- \frac{1}{2}}(X_{i,J} - P X_{i,J})}_2 \\ 
&\quad \leq  \frac{1}{\hat{\gamma}} \left( 2 \sum_{i = 1}^{n_t} |Y_i - q| + \frac{n_t c}{2} \right) \times \max_{1 \leq i \leq n_t} \max_{J \subset \{1,\ldots,d\} : |J| \leq K} \Norm{\Sigma_{JJ}^{- \frac{1}{2}}(X_{i,J} - P X_{i,J})}_2.
\end{align*}
Let $r = 1 + \frac{u}{2}, r' = 1 + \frac{2}{u}$, $p = \frac{1+u}{r}$, $p' = \frac{1}{1 - \frac{1}{p}}$. 
Let
\begin{equation} \label{eq_def_RK}
\hat{R}_K = \max_{1 \leq i \leq n_t} \max_{J \subset \{1,\ldots,d\} : |J| \leq K} \Norm{\Sigma_{JJ}^{- \frac{1}{2}}(X_{i,J} - P X_{i,J})}_2.
\end{equation}
Then, by two applications of Hölder's inequality,
\begin{align*}
&\mathbb{E} \left[\max_{1 \leq k \leq K} \max_{1 \leq i \leq n_t} 
\bigl| \langle \hat{\theta}_{k,R}, X_i - P X_i  \rangle  \bigr| \right] \\
&\quad \leq \Norm{\frac{2}{\hat{\gamma}} \sum_{i = 1}^{n_t} |Y_i - q| + \frac{n_t c}{2 \hat{\gamma}}}_{L^r} \Norm{\hat{R}_K }_{L^{r'}} \\
&\quad \leq n_t \Norm{\frac{1}{\hat{\gamma}}}_{L^{p'r}} \left( 2\Norm{Y_1 - q}_{L^{pr}} + \frac{c}{2} \right) 
\Norm{\hat{R}_K}_{L^{r'}}.  
\end{align*}
By definition, $pr = 1 + u$, 
\begin{align*}
p'r &= \frac{r}{1 - \frac{r}{1 + u}} \\
&= \frac{1 + \frac{u}{2}}{1 - \frac{1 + \frac{u}{2}}{1 + u}} \\
&= \frac{2\left(1 + \frac{u}{2} \right) \left(1 + u \right)}{u} \\
&\leq 4 + \frac{2}{u}.
\end{align*}
Therefore, if $n_t \geq 13 + \frac{6}{u}$, then by lemma \ref{lem_lb_gam} below, for some constant $\mu_7$,
\begin{equation} \label{eq_ub_norm_pred_gauss}
\mathbb{E} \left[\max_{1 \leq k \leq K} \max_{1 \leq i \leq n_t} 
\bigl| \langle \hat{\theta}_{k,R}, X_i - PX_i  \rangle  \bigr| \right] 
\quad \leq n_t \mu_7 \left( \Norm{Y_1 - q}_{L^{1+u}} \vee c \right) \Norm{\hat{R}_K}_{L^{r'}}. 
\end{equation}
Let us now bound $\Norm{\hat{R}_K}_{L^{r'}}$, where we recall that $\hat{R}_K$ is given by equation \eqref{eq_def_RK}.
Since for any $i \in \{1,\ldots,n_t\}$, $J \subset \{1,\ldots,d\}$, $\Sigma_{JJ}^{- \frac{1}{2}}(X_{i,J} - P X_{i,J})$ is a standard normal vector of size $|J|$,  by the gaussian concentration inequality, there exists some constant $\mu$ such that
\begin{align*}
\Norm{\hat{R}_K}_{L^{r'}} &\leq \max_{J \subset \{1,\ldots,d\}: |J| \leq K} P\left[ \Norm{\Sigma_{JJ}^{- \frac{1}{2}}(X_{i,J} - P X_{i,J})}_2  \right] + \sqrt{\mu \left(\log n_t + \log \sum_{j \leq K} {d \choose j} \right)} + \sqrt{\mu r'} \\
&\leq \sqrt{K} + \sqrt{\mu \log n_t} + 
\sqrt{ \mu (1 + K \log d)}
 + \sqrt{\mu (1 + \tfrac{2}{u})}. 
\end{align*}
Since by assumption $n_t \geq 13 + \frac{6}{u}$ and $K \leq \frac{n_t}{\log d}$ and since $\log n_t \leq n_t$,
\begin{align*}
\Norm{\hat{R}_K}_{L^{r'}} &\leq (1 + \sqrt{\mu})\sqrt{n_t} + \sqrt{\mu(1 + n_t)} + \sqrt{\mu \frac{u + 2}{13 u + 6}} \sqrt{n_t} \\
&\leq (1 + 3\sqrt{\mu})\sqrt{n_t}.
\end{align*}
From equation \eqref{eq_ub_norm_pred_gauss}, we can conclude that for some constant $\mu_7' \geq \mu_7$,
\[ \mathbb{E} \left[\max_{1 \leq k \leq K} \max_{1 \leq i \leq n_t} 
\bigl| \langle \hat{\theta}_{k,R}, X_i - PX_i  \rangle  \bigr| \right] 
\leq \mu_7' \left( \Norm{Y_1 - q}_{L^{1+u}} \vee c \right) n_t^{\frac{3}{2}}. \]
Together with \eqref{eq_ub_L1_thet}, this proves point 2. of hypothesis \ref{hyp_sparse_reg}. 
with $\alpha = \frac{3}{2}$ and $L = \mu_7' (\Norm{Y_1 - q}_{L^{1+u}} \vee c)$. 

By equation \eqref{eq_ub_kappalogkappa}, hypothesis \textbf{(Ni)} holds with $\nu_0 = 0.6 \sqrt{\frac{\log n_t}{n_v}}$. Let $\mu_5 = 0.6 \mu_2 \sqrt{4.5} \geq 0.6 \mu_2 \sqrt{\alpha + 3}$, so that $\theta \geq \frac{\mu_5}{\eta} \sqrt{\frac{\log n_t}{n_v}}$ implies $\theta \geq \sqrt{\alpha + 3} \frac{\mu_2 \nu_0}{\eta}$. Then, by Theorem \ref{agcv_hub} and since $K \log K \leq n_t$ (by equation \eqref{eq_bound_K}), we obtain Corollary \ref{cor_gauss} with $\mu_8 = 7 \mu_1 \mu_7'$.
\end{proof}

\begin{lemma} \label{lem_lb_gam}
There exists a constant $\mu_6$ such that
for any subset $I$ such that $|I| \leq \min \left( \frac{n_t}{\log n_t}, \frac{2}{5}(n_t-1) \right)$ and for all $\varepsilon \in (0;1]$,
\[ \mathbb{P} \left( \hat{\gamma}_I \leq \varepsilon \right) \leq 2 \sqrt{e} (\mu_6 \varepsilon)^{\frac{n_t - 1}{2}}. \]
Moreover, if in addition $|I| \leq \frac{n_t}{\log d}$,
then for all $\varepsilon \in (0;1]$,
\[ \mathbb{P} \left( \hat{\gamma} \leq \varepsilon \right) 
\leq 2e^{\frac{5}{2}} (\mu_6 e^2 \varepsilon)^{\frac{n_t - 1}{2}}.  \]
As a result, for any $r \in \left[0, \frac{n_t - 1}{3} \right]$,
\[ \Norm{\frac{1}{\hat{\gamma}}}_{L^r} \leq 2 \mu_6 e^2 \left[ 2 (1+2 e^{\frac{5}{2}}) \right]^{\frac{1}{r}}. \]
\end{lemma}
\begin{proof}
By restricting to a subspace, we can always assume that 
$M(\theta) = \sqrt{\theta \Sigma \theta}$ is a norm.
Let  $S_\Sigma = \{ \theta \in E_I: \sqrt{\theta \Sigma \theta} = 1 \}$
be the unit sphere in norm $M$.
Let $\varepsilon > 0$.
By changing coordinates, it is easy to see that the metric 
entropy of $S_\Sigma$ in norm $M$ is the same as that
of the euclidean sphere $S$ in the euclidean norm.
Therefore, for any $\delta > 0$, there exists a finite set $S_{\Sigma,\delta}$, of
cardinality less than $\left(\frac{6}{\delta} \right)^d$ and such that for any $u \in S_\Sigma$, there exists $v \in S_{\Sigma,\delta}$ such that $M(u - v) \leq \frac{\delta}{2}$.
Therefore,
\begin{align*}
\mathbb{P} \left( \hat{\gamma}_I \leq \frac{\varepsilon}{2} \right)
&= \mathbb{P} \left( \inf_{\theta \in S_\Sigma} \hat{N}(\theta) \leq \frac{\varepsilon}{2}\right) \\
&\leq \mathbb{P} \left( \inf_{\theta \in S_{\Sigma,\delta}} \hat{N}(\theta) \leq \varepsilon \right) + \mathbb{P} \left( \sup_{\theta \in E_I: M(\theta) \leq \delta} \hat{N}(\theta) \geq \frac{\varepsilon}{2}  \right). \numberthis \label{eq_discr_inf_gauss}
\end{align*}
By definition,
\begin{align*}
\sup_{\theta \in E_I: M(\theta) \leq \delta} \hat{N}(\theta) 
&=  \sup_{\theta \in E_I: \sqrt{\theta^T \Sigma \theta } \leq \delta} 
\max_{1 \leq i \leq n_t} |\langle \theta, X_i - \bar{X}_{n_t} \rangle| \\
&= \delta \max_{1 \leq i \leq n_t} \sqrt{(X_{i,I} - \bar{X}_{n_t,I})^T \Sigma_{I,I}^{-1}(X_{i,I} - \bar{X}_{n_t,I})} \\
&\leq 2\delta \max_{1 \leq i \leq n_t} \Norm{\Sigma_{I,I}^{-\frac{1}{2}}(X_{i,I} - P X_{i,I})}_2.
\end{align*}
As $\Sigma_{I,I}^{-\frac{1}{2}}(X_{i,I} - P X_{i,I})$ is a standard 
normal vector,
$P \left[ \Norm{\Sigma_{I,I}^{-\frac{1}{2}}(X_{i,I} - PX_{i,I})}_2 \right]
\leq \sqrt{|I|}$.
 Hence, by the union bound and the Gaussian concentration 
inequality,
\begin{align*}
\mathbb{P} \left( \sup_{\theta \in E_I: M(\theta) \leq \delta} \hat{N}(\theta) \geq \frac{\varepsilon}{2}  \right) &\leq 
n_t \mathbb{P} \left( \Norm{\Sigma_{I,I}^{-\frac{1}{2}}(X_{i,I} - P X_{i,I})}_2 \geq \frac{\varepsilon}{4\delta} \right) \\
&\leq n_t \exp \left( - \frac{1}{2} \bigl( \tfrac{\varepsilon}{4\delta} 
- \sqrt{|I|} \bigr)^2 \right). \numberthis \label{eq_ub_sup_gauss}
\end{align*}
On the other hand, for any $\theta \in S_\Sigma$, 
$\langle \theta, X_i - P X_i \rangle$ is standard normal, therefore
\begin{align*}
\mathbb{P} \left(\hat{N}(\theta) \leq \varepsilon \right) &=
\mathbb{P} \left( \max_{1 \leq i \leq n_t} |\langle \theta, X_i - \bar{X}_{n_t}| \leq \varepsilon   \right) \\ 
&\leq 
\mathbb{P} \left( \inf_{m \in \mathbb{R}} \max_{1 \leq i \leq n_t} |\langle \theta, X_i - P X_i \rangle - m| \leq \varepsilon   \right) \\
&\leq \mathbb{P} \left( \max_{1 \leq i \leq n_t} |\langle \theta, X_i - P X_i \rangle - \langle \theta, X_1 - P X_1 \rangle| \leq 2 \varepsilon   \right) \\
&\leq \left( \frac{2\varepsilon}{\sqrt{2 \pi}} \wedge 1 \right)^{n_t - 1}.
\numberthis \label{eq_lb_inf_gauss}
\end{align*} 
By the union bound, it follows from equations \eqref{eq_discr_inf_gauss}, \eqref{eq_ub_sup_gauss} and \eqref{eq_lb_inf_gauss} that
\[ \mathbb{P} \left( \hat{\gamma}_I \leq \frac{\varepsilon}{2} \right)
\leq \left( \frac{6}{\delta} \right)^{|I|} \left( \frac{2\varepsilon}{\sqrt{2 \pi}} \wedge 1 \right)^{n_t - 1} + n_t \exp \left( - \frac{1}{2} \bigl( \tfrac{\varepsilon}{4\delta} 
- \sqrt{|I|} \bigr)^2 \right).   \]
Let now $\delta = \frac{\varepsilon}{4 \left(\sqrt{|I|} + \sqrt{2(\log n_t + n_t \log \frac{1}{\varepsilon})} \right)}$.
Then 
\[ n_t \exp \left( - \frac{1}{2} \bigl( \tfrac{\varepsilon}{4\delta} 
- \sqrt{|I|} \bigr)^2 \right) \leq \varepsilon^{n_t}. \]
Moreover, there exists a constant $\mu$ such that
\[ \left( \frac{6}{\delta} \right)^{|I|} \left( \frac{2\varepsilon}{\sqrt{2 \pi}} \wedge 1 \right)^{n_t - 1} \leq 
\mu^{|I|} \max \left(\sqrt{|I|}^{|I|}, \sqrt{\log n_t}^{|I|}, \sqrt{n_t \log \tfrac{1}{\varepsilon}}^{|I|} \right) \varepsilon^{n_t - 1 - |I|}. \]
Because $|I| \leq \frac{n_t}{\log n_t}$, $\sqrt{|I|}^{|I|} = \exp \left( \frac{1}{2} |I| \log(|I|) \right) \leq e^{\frac{n_t}{2}}$.
Using the inequality $\log n_t \leq \sqrt{n_t}$, it follows by the same argument that $\sqrt{\log n_t}^{|I|} \leq e^{\frac{n_t}{4}}$. 
Since $\log \tfrac{1}{\varepsilon} \leq \frac{1}{\sqrt{\varepsilon}}$,
 \[ \sqrt{n_t \log \tfrac{1}{\varepsilon}}^{|I|} \leq 
 \exp \bigl( \frac{1}{2} |I| \log n_t + \frac{1}{4} |I| \log \tfrac{1}{\varepsilon} \bigr) \leq e^{\frac{n_t}{2}} \varepsilon^{\frac{- |I|}{4}}. \]
It follows that 
\[ \left( \frac{6}{\delta} \right)^{|I|} \left( \frac{2\varepsilon}{\sqrt{2 \pi}} \wedge 1 \right)^{n_t - 1} \leq e^{\frac{n_t}{2}} \mu^{|I|} 
 \varepsilon^{n_t - 1 - \frac{5}{4}|I|}.  \]
 Finally, since $|I| \leq \frac{2}{5} (n_t - 1)$,
 \[ \left( \frac{6}{\delta} \right)^{|I|} \left( \frac{2\varepsilon}{\sqrt{2 \pi}} \wedge 1 \right)^{n_t - 1} \leq 
 e^{\frac{1}{2}} \bigl( e \mu^{\frac{4}{5}} \varepsilon \bigr)^{\frac{n_t - 1}{2}}, \]
 which yields the first inequality for some constant $\mu_6$.
 The second inequality then follows from the union bound:
 \begin{align*}
 \mathbb{P}(\hat{\gamma} \leq \varepsilon) &\leq \sum_{I \subset \{1,\ldots,d\} : |I| \leq K} \mathbb{P}(\hat{\gamma}_I \leq \varepsilon) \\
 &\leq 2\sqrt{e} \bigl( \mu_6 \varepsilon \bigr)^{\frac{n_t - 1}{2}} \times \sum_{k = 1}^K {K \choose k} \\
 &\leq 2\sqrt{e} \bigl( \mu_6 \varepsilon \bigr)^{\frac{n_t - 1}{2}}
 \times \sum_{k = 1}^K \frac{d^ k}{k!} \\
 &\leq 2e d^ K \sqrt{e} \bigl( \mu_6 \varepsilon \bigr)^{\frac{n_t - 1}{2}}.
 \end{align*}
 By assumption, $d ^K = e^{K \log d} \leq e^{n_t}$, 
 which yields the second equation.
As a result, for any $r \leq \frac{n_t-1}{3}$
\begin{align*}
\mathbb{E} \left[ \frac{1}{\hat{\gamma}^r} \right]
&= \int_0^{+ \infty} \mathbb{P} \left(\frac{1}{\hat{\gamma}^r} \geq t  \right) dt \\
&\leq (\mu_6 e^2)^r + \int_{(\mu_6 e^2)^r}^{+ \infty} \mathbb{P} \left(\frac{1}{\hat{\gamma}^r} \geq t  \right) dt \\
&= (\mu_6 e^2)^r + (\mu_6 e^2)^r \int_{1}^{+ \infty} \mathbb{P} \left(\hat{\gamma} \leq \frac{1}{(\mu_6 e^2)t^{\frac{1}{r}}}  \right) dt \\
&\leq ( \mu_6 e^2)^r + (\mu_6 e^2)^r \times 2 e^{\frac{5}{2}} 
 \int_{1}^{+ \infty} \left(\frac{1}{t^{\frac{1}{r}}} \right)^{\frac{n_t - 1}{2}} \\
 &= (1+2 e^{\frac{5}{2}}) (\mu_6 e^2)^r \frac{1}{\frac{n_t - 1}{2r} - 1} \\
 &\leq 2(1+2 e^{\frac{5}{2}}) (\mu_6 e^2)^r. 
\end{align*}
\end{proof}

\subsection{Fourier series} \label{app.sec.trig}
\subsubsection{Proof of Corollary \ref{cor_trigo}}
Let $I \subset \{1,\ldots, d\}$ and $\theta \in \mathbb{R}^I$.
Since $\psi_j(\mathbb{R}) \subset [-\sqrt{2}; \sqrt{2}]$, 
for any $x \in \mathbb{R}^d$, by the Cauchy Schwarz inequality,
\begin{align*}
|\langle \theta, x - P X \rangle| &= \left| \sum_{j \in I} \theta_j (\psi_j(x) - E [\psi_j(U)] ) \right| \\
&\leq \sqrt{\sum_{j \in I} \theta_j^2} \sqrt{8 |I|}. 
\end{align*}
Therefore, 
\[ \Norm{\langle \theta, X - P X \rangle}_{L^{\psi_1}} 
\leq \frac{1}{\log 2} \Norm{\langle \theta, X - P X \rangle}_{L^\infty} 
\leq \frac{\sqrt{8 |I|}}{\log 2} \Norm{\theta}_{\ell^2}.  \]
On the other hand, for all $j$, $\psi_j(U) = \psi_j(U - \lfloor U \rfloor)$,
where the variable $U - \lfloor U \rfloor$ has density $\sum_{j \in \mathbb{Z}} p_U(\cdot + j)$ on $[0;1]$, which by assumption is greater than $p_0$. Therefore, by orthonormality of the trigonometric basis,
\begin{align*}
P \left( \langle \theta, X - P X \rangle^2 \right) 
&\geq p_0 \int_0^1 \left( \sum_{j \in I} \theta_j (\psi_j(u) - P \psi_j) \right)^2 du \\
&\geq  p_0 \Norm{\theta}_{\ell^2}^2.
\end{align*}
Thus, for any $I \subset \{1,\ldots, d\}$ and $\theta \in \mathbb{R}^I$,
\[ \Norm{\langle \theta, X - P X \rangle}_{L^{\psi_1}}
\leq \frac{1}{\log 2} \sqrt{\frac{8 |I|}{p_0}} \Norm{\langle \theta, X - P X \rangle}_{L^2},  \]
which proves that $\kappa(K) \leq \sqrt{8\frac{K}{p_0 \log^2 2}}$. Take $\mu_9 = \frac{\mu_2 \sqrt{40}}{2 \log 2} \geq 4$ in equation \eqref{hyp_ub_K_cor_trigo}, so that, since $n_t \geq 3$ and $n_v \leq n_t$,
\[ \kappa(K) \leq \frac{2 \theta \eta}{\mu_2 \sqrt{5}} \frac{\sqrt{n_v}}{\log^{\frac{3}{2}} n_t} < \sqrt{n_v} \leq \sqrt{n_t}. \]

Then equation \eqref{Hw_huber} of Theorem \ref{agcv_hub} holds with $\nu_0 = \frac{\theta \eta}{\mu_2 \sqrt{5}}$. 

Moreover, since the support $I_k$ of $\tilde{\theta}_k$ has cardinality $|I_k| \leq K$, by the Cauchy-Schwarz inequality,
\begin{align*}
\max_{1 \leq k \leq K} \left| \langle \tilde{\theta}_k, X - P X \rangle \right| &\leq \sqrt{8K}  \max_{1 \leq k \leq K} \Norm{\tilde{\theta}_k}_{\ell^2} \\
&\leq \sqrt{8K} n_t^{\frac{3}{2}} \\
 \max_{1 \leq k \leq K} \max_{1 \leq i \leq n_t} \left| \langle \tilde{\theta}_k, X_i - P X_i \rangle \right| &\leq \sqrt{8K}  \max_{1 \leq k \leq K}  \Norm{\tilde{\theta}_k}_{\ell^2} \\
 &\leq \sqrt{8K} n_t^{\frac{3}{2}}.
\end{align*}
Since by assumption (equation \eqref{hyp_ub_K_cor_trigo}), $K \leq \frac{n_v}{\mu_9^2 \log n_t} \leq \frac{n_t}{16}$, hypothesis \textbf{(Uub)} holds with $L = \frac{1}{\sqrt{2}}$ and $\alpha = 2$.
As a result, applying Theorem \ref{agcv_hub}
yields equation \eqref{or_ineq_cor_trigo}.

\subsubsection{Proof of proposition \ref{prop_trunc_improves}}

Let $\tilde{t}_k : x \mapsto \tilde{q}_k + \langle \tilde{\theta}_k, x \rangle$ and $\hat{t}_k : x \mapsto \hat{q}_k + \langle \hat{\theta}_k, x \rangle$.
%By definition, $\tilde{t}_k = \hat{t}_k$ whenever 
%$\Norm{\hat{\theta}_k} \leq n_t^\frac{3}{2}$; it remains to 
%deal with the case $\Norm{\hat{\theta}_k} > n_t^\frac{3}{2}$.
By lemma \ref{lem_lb_loss}, 
\begin{align*}
\loss{\hat{t}_k} &\geq P \left[ \frac{\eta c}{4} |\hat{t}_k(X) - s(X)| \mathbb{I}_{|\hat{t}_k(X) - s(X)| \geq \frac{c}{2}} \right] \\
&\geq \frac{\eta c}{4}  \Norm{\hat{t}_k(X) - s(X)}_{L^1} - \frac{\eta c^2}{8} \\
&\geq \frac{\eta c}{4} \Norm{\hat{t}_k(X) - \tilde{q}}_{L^1} - 
\frac{\eta c}{4} \Norm{s(X) - \tilde{q}}_{L^1} - \frac{\eta c^2}{8}. \numberthis \label{eq_lb_loss_tk}
\end{align*} 
Let $I$ be the support of $\hat{\theta}_k$, and
$\hat{\theta}_{k,j}$ denote the $j^{th}$ component of the vector
$\hat{\theta}_k$. By 
the Cauchy-Schwarz inequality and 
orthogonality of the trigonometric basis,
\begin{align*}
\NormInfinity{\hat{t}_k - \tilde{q}} &= \sup_{x \in \mathbb{R}} 
\left| \hat{q}_k - \tilde{q} + \sum_{j \in I} \hat{\theta}_{k,j} \psi_j(x) \right| \\
&\leq \sqrt{(\hat{q}_k - \tilde{q})^2 + \Norm{\hat{\theta}_k}_{\ell^2}^2} \sqrt{2|I| + 1} \\
&\leq \sqrt{2K + 1} \Norm{\hat{t}_k - \tilde{q}}_{L^2}.
\end{align*}
Since $\Norm{\hat{t}_k - \tilde{q}}^2_{L^2} \leq \NormInfinity{\hat{t}_k - \tilde{q}} \Norm{\hat{t}_k(X) - \tilde{q}}_{L^1} $, it follows that
\[ \Norm{\hat{t}_k(X) - \tilde{q}}_{L^1} \geq \frac{\Norm{\hat{t}_k - \tilde{q}}_{L^2}}{\sqrt{2K + 1}} \geq \frac{\Norm{\hat{\theta}_k}_{\ell^2}}{\sqrt{2K + 1}}, \]
therefore by equation \eqref{hyp_ub_K_cor_trigo}, on the event $\Norm{\hat{\theta}_k}_{\ell^2} \geq n_t^{\frac{3}{2}}$,
\begin{align*}
\Norm{\hat{t}_k(X) - \tilde{q}}_{L^1} &\geq \frac{n_t^\frac{3}{2}}{\sqrt{\eta^2 \frac{n_t}{8} + 1}} \\
&= \frac{n_t}{\sqrt{\frac{\eta^2}{8} + \frac{1}{n_t}}} \\
&\geq  \frac{3 n_t}{2 \eta} \text{ since } n_t \geq \frac{4}{\eta^2} \text{ by equation } \eqref{hyp_lb_nt_cor_trigo}.    
\end{align*}

On the event $\Norm{\hat{\theta}_k}_{\ell^2} \geq n_t^{\frac{3}{2}}$, $\loss{\tilde{t}_k} = \loss{\tilde{q}} \leq c \Norm{s(X) - \tilde{q}}_{L^1}$,
therefore by equation \eqref{eq_lb_loss_tk},
\begin{align*}
\loss{\hat{t}_k} - \loss{\tilde{t}_k} &\geq \frac{3c n_t}{8} - \frac{5 c}{4} \Norm{s(X) - \tilde{q}}_{L^1} - \frac{\eta c^2}{8} \\
&\geq \frac{3c n_t}{8} - \frac{5 c}{4} \Norm{s(X) - q_*}_{L^1} - \frac{5 c}{4} |\tilde{q} - q_*|  - \frac{\eta c^2}{8} \\
&\geq \frac{c n_t}{4} - \frac{5c}{4}|\tilde{q} - q_*| \text{ by assumption } \eqref{hyp_lb_nt_cor_trigo}.
\end{align*}
Let $\hat{k} \in \argmin_{1 \leq k \leq K} \loss{\hat{t}_k}$.
Thus, on the event that $\Norm{\hat{\theta}_{\hat{k}}}_{\ell^2} > n_t^{\frac{3}{2}}$,
\[ \min_{1 \leq k \leq K} \loss{\hat{t}_k} - \min_{1 \leq k \leq K} \loss{\tilde{t}_k} \geq \loss{\hat{t}_{\hat{k}}} - \loss{\tilde{t}_{\hat{k}}} \geq 
\frac{c n_t}{4} - \frac{5c}{4}|\tilde{q} - q_*|. \]
On the other hand, if $\Norm{\hat{\theta}_{\hat{k}}}_{\ell^2} \leq n_t^{\frac{3}{2}}$, $\tilde{t}_{\hat{k}} = \hat{t}_{\hat{k}}$ by definition, so
\[ \min_{1 \leq k \leq K} \loss{\hat{t}_k} - \min_{1 \leq k \leq K} \loss{\tilde{t}_k} \geq \loss{\hat{t}_{\hat{k}}} - \loss{\tilde{t}_{\hat{k}}} \geq 0. \]
Let $\delta_0 = \mathbb{P} \left( \Norm{\hat{\theta}_{\hat{k}}}_{\ell^2} > n_t^{\frac{3}{2}} \right)$.
By Hölder's inequality,
\begin{align*}
\mathbb{E} \left[ \min_{1 \leq k \leq K} \loss{\hat{t}_k} \right] 
- \mathbb{E} \left[ \min_{1 \leq k \leq K} \loss{\tilde{t}_k} \right] 
&\geq \delta_0 \frac{c n_t}{4} - \frac{5c}{4} \delta_0^{\frac{3}{4}} \mathbb{E}[(\tilde{q} - q_*)^4]^{\frac{1}{4}} \\
&\geq \inf_{\delta \in (0,1]} \delta \frac{c n_t}{4} - \frac{5c}{4} \delta^{\frac{3}{4}} \mathbb{E}[(\tilde{q} - q_*)^4]^{\frac{1}{4}}.
%&\geq - \left[ \bigl( \tfrac{3}{4} \bigr)^\frac{4}{3} 4 \sqrt{5} \bigl( \tfrac{5}{4} \bigr)^\frac{4}{3} \frac{\mathbb{E}[(\tilde{q} - q_*)^4]^{\frac{1}{3}}}{c n_t} \right]^3 \text{ by lemma } 
\end{align*}
Hence, by lemma \ref{lem_inf_prob} with $\alpha = \frac{3}{4}$ , there exists a constant $\mu$ such that
\begin{equation} \label{eq_lb_diff_min}
\mathbb{E} \left[ \min_{1 \leq k \leq K} \loss{\hat{t}_k} \right] 
- \mathbb{E} \left[ \min_{1 \leq k \leq K} \loss{\tilde{t}_k} \right]  
\geq - \mu \frac{c \mathbb{E}[(\tilde{q} - q_*)^4]}{n_t^3}.
\end{equation} 
%\geq
%c \left( \frac{n_t}{2 \sqrt{5}} - \frac{c}{8} - \frac{\Norm{s(X)}_{L^1}}{4}    %\right). \]
%In particular,
%\[ \mathbb{E} \left[ \min_{1 \leq k \leq K} \loss{\hat{t}_k} \right] 
%- \mathbb{E} \left[ \min_{1 \leq k \leq K} \loss{\tilde{t}_k} \right] \geq   \frac{c n_t}{2 \sqrt{5}} - \frac{5 c}{4} \Norm{s(X) - q_*}_{L^1} - \frac{5 c}{4} \mathbb{E}[|\tilde{q} - q_*|]  - \frac{\eta c^2}{8}. \]
%Now, for any $\alpha \in (0,1]$,
%\begin{align*}
%P[|Y - q_*|^\alpha]^{\frac{1}{\alpha}} &\leq 
%\left( P(|s(X) - q_*|^\alpha) + P(|Y - s(X)|^\alpha) \right)^{\frac{1}{\alpha}} \\
%&\leq 2^{\frac{1}{\alpha}} \left( P(|s(X) - q_*|^\alpha)^{\frac{1}{\alpha}} + P(|Y - s(X)|^\alpha)^{\frac{1}{\alpha}} \right) \\
%&\leq 2^{\frac{1}{\alpha}} P(|s(X) - q_*|) + 2^{\frac{1}{\alpha}} P(|Y - s(X)|^\alpha)^{\frac{1}{\alpha}}.
%\end{align*}
Moreover, by lemma \ref{lem_ub_risque_hub_loc} below, for all $n_t \geq \frac{16}{\alpha}$,
\[ \mathbb{E}[|\tilde{q} - q_*|^4]^{\frac{1}{4}} \leq c + 1.4 \times 2^{\frac{2}{\alpha}} P(|Y - q_*|^\alpha)^{\frac{1}{\alpha}}. \]
Thus, equation \eqref{eq_ub_tilde_cor_trigo} follows from equation \eqref{eq_lb_diff_min} and the additional assumption that $n_t \geq \frac{16}{\alpha}$.
%Thus, since by assumption $n_t \geq \frac{c \sqrt{5}}{4} + \frac{5 \sqrt{5}}{2} \Norm{s(X)}_{L^1}$,
%\[ \loss{\hat{t}_k} \geq c \Norm{s(X)}_{L^1} \geq \loss{0} = \loss{\tilde{t}_k}. \]

\begin{lemma} \label{lem_inf_prob}
Let $a,b$ be positive real numbers and 
let $\alpha \in [0,1)$.
Then 
\[ \inf_{\delta > 0} a \delta - b \delta^\alpha \geq \left[ \alpha^{\frac{1}{1 - \alpha}} - \alpha^{\frac{\alpha}{1 - \alpha}} \right] \frac{b^{\frac{1}{1 - \alpha}}}{a^{\frac{\alpha}{1 - \alpha}}}. \]
\end{lemma}

\begin{proof}
The function $f: \delta \to a \delta - b \delta^\alpha$ 
is continuous, tends to $+ \infty$ at $+\infty$ and 
$f(0) = 0$, so $f$ reaches a global minimum $\delta_*$ on $[0,+\infty)$.
As $f$ is differentiable,
\begin{align*}
0 = f'(\delta_*) &\iff a - \frac{\alpha b}{\delta_*^{1 - \alpha}} = 0  \\
&\iff a \delta_*^{1 - \alpha} = \alpha b \\
&\iff \delta_* = \left( \frac{\alpha b}{a} \right)^{\frac{1}{1 - \alpha}}.
\end{align*}
Thus, for all $\delta \in [0,+\infty)$,
\begin{align*}
a \delta - b \delta^\alpha &\geq a  \left( \frac{\alpha b}{a} \right)^{\frac{1}{1 - \alpha}} - b \left( \frac{\alpha b}{a} \right)^{\frac{\alpha}{1 - \alpha}} \\
&\geq \left[ \alpha^{\frac{1}{1 - \alpha}} - \alpha^{\frac{\alpha}{1 - \alpha}} \right] \frac{b^{\frac{1}{1 - \alpha}}}{a^{\frac{\alpha}{1 - \alpha}}}.
\end{align*}
\end{proof}

\begin{lemma} \label{lem_ub_risque_hub_loc}
Let $n_t \geq 4$ be an integer and $Y_1, \ldots , Y_{n_t}$ be iid random variables 
such that, for some $q_* \in \mathbb{R}$ and $\alpha \in \left[\tfrac{4}{n_t},1 \right]$,
$E[|Y_1 - q_*|^\alpha] < +\infty$.
Let
\[ \tilde{q} \in \argmin_{q \in \mathbb{R}} \sum_{i = 1}^{n_t} \phi_c(Y_i - q). \]
Then for all $r \in \left[ 1,\frac{\alpha n_t}{4} \right]$,
\[ \mathbb{E}[|\tilde{q} - q_*|^r]^{\frac{1}{r}} \leq c + 2^{\frac{2}{\alpha}} 3^{\frac{1}{r}} E[|Y_1 - q_*|^\alpha]^{\frac{1}{\alpha}}. \]
\end{lemma}

\begin{proof}
Remark first that for any $x \in \mathbb{R}$, 
\[ \phi_c'(x) = 
\begin{cases}
-c \text{ if } x \leq - c \\
 x \text{ if } |x| \leq c \\
 c  \text{ if } x \geq c. 
\end{cases} \] 
For any $q \in \mathbb{R}$, let $I_+(q) = \{i : Y_i > q + c \}$,
$I_-(q) = \{i : Y_i < q - c \}$ and 
$I_0(q) = \{i : |Y_i - q| \leq c \}$.
Thus, 
\[ \sum_{i = 1}^{n_t} \phi_c'(Y_i - q) 
= c |I_+(q)| - c |I_-(q)| + \sum_{i \in I_0(q)} Y_i - q, \]
so that 
\[ c\left( |I_+(q)| - |I_-(q)| - |I_0(q)| \right) \leq 
\sum_{i = 1}^{n_t} \phi_c'(Y_i - q)
\leq c \left( |I_+(q)| + |I_0(q)| - |I_-(q)| \right). \]
Let $q_g$ be such that $|I_+(q_g)| > \frac{n_t}{2}$ and 
let $q_d$ be such that $|I_-(q_d)| > \frac{n_t}{2}$.
By monotony of $\phi_c'$, for all $q \leq q_g$,
$\sum_{i = 1}^{n_t} \phi_c'(Y_i - q) > 0$ and for all $q \geq q_d$,
$\sum_{i = 1}^{n_t} \phi_c'(Y_i - q) < 0$.
Since by definition of $\tilde{q}$,
\[ \frac{1}{n_t} \sum_{i = 1}^{n_t} \phi_c'(Y_i - \tilde{q}) = 0, \] 
it follows that $\tilde{q} \in [q_g,q_d]$.

Let $\sigma = E[|Y - q_*|^\alpha]^{\frac{1}{\alpha}}$.
By the union bound and Markov's inequality, for all $u > 0$,
\begin{align*}
\mathbb{P} \left( |I_+(q_* - c - u\sigma)| \leq \frac{n_t}{2} \right)
&= \mathbb{P} \left( |\{i : Y_i \geq q_* + \sigma u \}| > \frac{n_t}{2} \right) \\
&\leq {n_t \choose \lceil \tfrac{n_t}{2} \rceil } \mathbb{P} (Y_i \geq q_* + \sigma u )^{\frac{n_t}{2}} \\
&\leq \frac{2^{n_t}}{u^{\frac{\alpha n_t}{2}}}.
\end{align*}
Symetrically, \[ \mathbb{P} \left( |I_-(q_* + c + u\sigma)| \leq \frac{n_t}{2} \right) \leq \frac{2^{n_t}}{u^{\frac{\alpha n_t}{2}}}, \]
so that one can take $q_g = q_* - c - u\sigma $ and 
$q_d = q_* + c + u\sigma$ with probability greater than
$1 - \frac{2^{n_t + 1}}{u^{\frac{\alpha n_t}{2}}}$.
It follows that, for any $u > 0$,
\begin{equation} \label{eq_ub_dev_hub_loc_est}
\mathbb{P}(|\tilde{q} - q_*| > c + u\sigma) 
\leq \frac{2^{n_t + 1}}{u^{\frac{\alpha n_t}{2}}}.
\end{equation}

For any $r \geq 1$,  $\mathbb{E} \left[|\tilde{q} - q_*|^r \right]^{\frac{1}{r}} \leq c + 
\mathbb{E} \left[ (|\tilde{q} - q_*| - c)_+^r \right]^{\frac{1}{r}}$,
where 
\begin{align*}
\mathbb{E} \left[ (|\tilde{q} - q_*| - c)_+^r \right] &\leq \sigma^r \int_{0}^{+\infty} 
\mathbb{P} \left( \frac{(|\tilde{q} - q_*| - c)_+^r}{\sigma^r} \geq u \right) du \\
&\leq  \sigma^r \int_{0}^{+\infty} 
\mathbb{P} \left( (|\tilde{q} - q_*| \geq c + \sigma u^{\frac{1}{r}} \right) du \\
&\leq \sigma^r \int_{0}^{+\infty}  \min \left(1, \frac{2^{n_t + 1}}{u^{\frac{\alpha n_t}{2r}}} \right) \text{ by equation } \ref{eq_ub_dev_hub_loc_est} \\
&\leq 2^{\frac{2r}{\alpha}} \sigma^r + 2\sigma^r \int_{2^{\frac{2r}{\alpha}}}^{+ \infty} \left( \frac{2^{\frac{2r}{\alpha}}}{v} \right)^\frac{\alpha n_t}{2r} dv \\
&\leq 2^{\frac{2r}{\alpha}} \sigma^r + 2 \cdot 2^{\frac{2r}{\alpha}} \sigma^r 
\int_1^{+ \infty} \frac{dx}{x^\frac{\alpha n_t}{2r}} \\
&\leq 2^{\frac{2r}{\alpha}} \left( 1 + \frac{2}{\frac{\alpha n_t}{2r} - 1} \right) \sigma^r. 
\end{align*}
This yields the result under the condition that $r \leq \frac{\alpha n_t}{4}$.
\end{proof}

\subsection{Proof of proposition \ref{prop_ag_quant_improve}} \label{proof_prop_ag_quant_improve}
For any $i \in \{1,\ldots,V\}$, denote $\ERMho{T_i}$ by $\hat{f}_i(X)$ for simplicity. 
For any $u \in \mathbb{R}$, let
\[ \xloss{u} = P \left[\phi_c(Y - u) - \phi_c(Y - s(X)) | X \right].   \]
Let also  
\[ \hat{I} = \left\{ i \in \{1,\ldots,V\} : |(\ERMho{T_i} - s)(X)| \leq \frac{c}{2} \right\}. \]
By Jensen's inequality,
\begin{align*}
    \ell_X \Bigl(\frac{1}{V} \sum_{i = 1}^V \hat{f}_i \Bigr) &\leq \frac{|\hat{I}|}{V} \ell_X \Bigl(\frac{1}{|\hat{I}|} \sum_{i \in \hat{I}} \hat{f}_i \Bigr) + \frac{V - |\hat{I}|}{V} \ell_X \Bigl( \frac{1}{V - |\hat{I}|} \sum_{i \notin \hat{I}} \hat{f}_i \Bigr) \\
    &\leq \frac{|\hat{I}|}{V} \ell_X \Bigl(\frac{1}{|\hat{I}|} \sum_{i \in \hat{I}} \hat{f}_i \Bigr) + \frac{1}{V} \sum_{i \notin \hat{I}} \ell_X (\hat{f}_i).
    \numberthis \label{eq_ub_jensen_loss_ag}
\end{align*}
Let now $\bar{f}_{\hat{I}} = \frac{1}{|\hat{I}|} \sum_{i \in \hat{I}} \ERMho{T_i}$.
By claim \ref{lem_lb_loss},
for any $i \in \hat{I}$,
\[ \xloss{\hat{f}_i} \geq \xloss{\bar{f}_{\hat{I}}} + \ell_X'(\bar{f}_{\hat{I}}) (\hat{f}_i - \bar{f}_{\hat{I}}) + \frac{\eta}{2} \left( \hat{f}_i -  \bar{f}_{\hat{I}} \right)^2.  \]
Averaging over $i \in \hat{I}$ yields
\[ \frac{1}{|\hat{I}|} \sum_{i \in \hat{I}} \xloss{\hat{f}_i} \geq \ell_X \Bigl(\frac{1}{|\hat{I}|} \sum_{i \in \hat{I}} \ERMho{T_i} \Bigr) + \frac{\eta}{4 |\hat{I}|^2} \sum_{i \in \hat{I} } \sum_{j \in \hat{I}} (\hat{f}_i - \hat{f}_j)^2.  \]
Combining this bound with equation \eqref{eq_ub_jensen_loss_ag} yields
\begin{align*}
  \ell_X \Bigl(\frac{1}{V} \sum_{i = 1}^V \hat{f}_i \Bigr) &\leq   
  \frac{1}{|\hat{I}|} \sum_{i \in \hat{I}} \xloss{\hat{f}_i} 
  - \frac{\eta}{4 V |\hat{I}|} \sum_{i \in \hat{I}} \sum_{j \in \hat{I}} (\hat{f}_i - \hat{f}_j)^2 + \frac{1}{V} \sum_{i \notin \hat{I}} \xloss{\hat{f}_i} \\
  &\leq \frac{1}{V} \sum_{i = 1}^V \xloss{\hat{f}_i} - \frac{\eta}{4 V^2} \sum_{i \in \hat{I}} \sum_{j \in \hat{I}} (\hat{f}_i - \hat{f}_j)^2.
\end{align*}
Taking expectations yields equation \eqref{eq_comp_agghoo_ho} by exchangeability of the $\hat{f}_i$. 
Assume now that $\mathbb{E}[\loss{\hat{f}_1}] \leq \frac{\eta c^2}{64}$. 
By claim \ref{lem_lb_loss}, 
\[ \mathbb{E}[\loss{\hat{f}_1}] \geq \frac{\eta c^2}{8} \mathbb{P} \left( \bigl|(\hat{f}_1 - s)(X) \bigr| \geq \frac{c}{2} \right) = \frac{\eta c^2}{8} \mathbb{P}(E_1(c)).  \]
It follows that $\mathbb{P}(E_1(c)) \leq \frac{1}{8}$.
Since the $\hat{f}_i$ have the same distribution, $\mathbb{P}(E_2(c)) \leq \frac{1}{8}$ also.
Thus, by definition of the median,
\[ \mathbb{P} \left( E_1(c) \cap E_2(c) \cap \bigl\{ (\hat{f}_1 - \hat{f}_2)^2(X) \geq \text{Med} \bigl[ (\hat{f}_1 - \hat{f}_2)^2(X) \bigr] \bigr\} \right) \geq \frac{1}{4}. \]
Equation \eqref{eq_comp_agghoo_ho_med} then follows from equation \eqref{eq_comp_agghoo_ho}.

% Since the entries of $X$ are bounded by $1$, it follows that $R(K) \leq \sqrt{K}$.
% It remains to bound $\mu(K)$ from below. By equation \ref{rep_t}, 
% \subsection{Proof of theorem \ref{thm_density}}
% For $f,g \in L^2(\mu)$, let $c_{f,g} = \Norm{f}_2^2 - \Norm{g}_2^2$.
% Then \[ \cfun(f,\xi) - \cfun(g,\xi) - c_{f,g} = 2(f(\xi) - g(\xi)).\]
% 
% Let $(m,m') \in \cM^2$, $f \in Im (\learnrule_m) \subset E_m$
% and $g \in Im (\learnrule_{m'}) \subset E_{m'}$.
% 
% Then $f - g \in E_m - E_{m'}$ so that
% \begin{align*}
%  P \left[\cfun(f,\xi) - \cfun(g,\xi) - c_{f,g} \right]^k &= 2^k \int (f(x) - g(x))^k s(x) dx \\
%  &\leq 2^k \NormInfinity{s} \int (f(x) - g(x))^k dx \\
%  &\leq 2^k \NormInfinity{s} \NormInfinity{f-g}^{k-2} \Norm{f-g}_2^2 \\
%  &\leq 2^k \NormInfinity{s} \sqrt{\frac{n_v}{\log |\cM|}}^{k-2} \Norm{f-g}_2^{k-2} \Norm{f-g}_2^2 
%  \end{align*}
%  As $k \geq 2$,
% \[P \left[\cfun(f,\xi) - \cfun(g,\xi) - c_{f,g} \right]^k \leq 
% \sqrt{\frac{n_v}{\log |\cM|}}^{k-2} (2\sqrt{\NormInfinity{s}} \Norm{f-g}_2)^k \]
% So $(H_{a,w})$ is valid with $a = 1$ and
% $w(x) = 2\sqrt{\NormInfinity{s}} x$.
% 
% The function $x \rightarrow \frac{w(x)}{x}$ is nonincreasing. To use Theorem \ref{agcv_mean} we solve $\frac{w(\delta)}{\delta} = \sqrt{n_v} \delta$.
% We have $2\sqrt{\NormInfinity{s}} = \sqrt{n_v} \delta$, that is 
% $ \delta^2 = \frac{4\NormInfinity{s}}{n_v}$.
% The result follows by \eqref{eq_1}.
%\bibliographystyle{imsart-nameyear}
\bibliographystyle{plain}
\bibliography{premier_article_biblio}

\end{document}